%
 
\documentclass[aop]{imsart}

\RequirePackage{amsthm,amsmath,amsfonts,amssymb}
\RequirePackage[numbers]{natbib}

\RequirePackage[colorlinks,citecolor=blue,urlcolor=blue]{hyperref}
\usepackage{cleveref}
\usepackage[dvipdfmx]{graphicx}
\usepackage{epstopdf}
\usepackage{amsopn}
\usepackage{bbm}

\DeclareMathOperator*{\argmax}{arg\,max}
\DeclareMathOperator*{\argmin}{arg\,min}
\newcommand{\R}{\mathbb{R}}
\newcommand{\wh}{\widehat}
\newcommand{\bs}{\boldsymbol}
\newcommand{\1}{\mathbbm{1}}
\newcommand{\V}{\mathcal{V}}
\newcommand{\C}{\mathcal{C}}
\newcommand{\E}{\mathbb{E}}
\newcommand{\Var}{\mathrm{Var}}
\newcommand{\relmiddle}[1]{\mathrel{}\middle#1\mathrel{}}

\startlocaldefs

\newtheorem{theorem}{Theorem}[section]
\newtheorem{proposition}[theorem]{Proposition}
\newtheorem{lemma}[theorem]{Lemma}
\newtheorem{corollary}[theorem]{Corollary}

\theoremstyle{remark}
\newtheorem{definition}[theorem]{Definition}
\newtheorem{remark}[theorem]{Remark}

\crefname{equation}{}{}
\crefname{theorem}{Theorem}{Theorems}
\crefname{proposition}{Proposition}{Propositions}
\crefname{lemma}{Lemma}{Lemmas}
\crefname{corollary}{Corollary}{Corollarys}
\crefname{definition}{Definition}{Definitions}
\crefname{remark}{Remark}{Remarks}

\endlocaldefs
\begin{document}
\begin{frontmatter}
\title{Decision tree-based estimation of the overlap of two probability distributions}
\runtitle{Estimation of distribution overlap}

\begin{aug}
\author[A]{\fnms{Hisashi} \snm{Johno}\ead[label=e1]{johnoh@yamanashi.ac.jp}}
\and
\author[B]{\fnms{Kazunori} \snm{Nakamoto}\ead[label=e2]{nakamoto@yamanashi.ac.jp}}
\address[A]{Department of Radiology, Faculty of Medicine, University of Yamanashi, Japan, \printead{e1}}

\address[B]{Center for Medical Education and Sciences, Faculty of Medicine, University of Yamanashi, Japan, \printead{e2}}
\end{aug}

\begin{abstract}
A new nonparametric approach, based on a decision tree algorithm, is proposed to calculate the overlap between two probability distributions.
The devised framework is described analytically and numerically.
The convergence of the estimated overlap to the true value is proved along with some experimental results.
\end{abstract}

\begin{keyword}[class=MSC2020]
\kwd{62G05}
\end{keyword}

\begin{keyword}
\kwd{Probability distribution}
\kwd{Crossover point}
\kwd{Overlap coefficient}
\kwd{Nonparametric}
\kwd{Decision tree}
\end{keyword}

\end{frontmatter}


\section{Introduction}
\label{sec:intro}
In various scientific fields, it is important to assess the similarity between data sets or distributions.
The overlap coefficient (OVL) is an interpretable measure of such similarity, defined as the common area under two probability density functions (PDFs).
While a variety of parametric techniques to estimate OVL have been developed, existing nonparametric ones are wholly based on kernel density estimation (KDE) \cite{montoya19, pastore19, schmid06}.
Although KDE is a useful and widely practiced method to estimate probability density functions, the optimal setting of its parameters (kernel function and bandwidth) is a challenging task.

Here we propose a new nonparametric method to calculate OVL based on a decision tree algorithm.
We start with notation and preliminaries in \Cref{sec:prelim}.
The devised framework is described analytically in \Cref{sec:anal} and numerically in \Cref{sec:numer}.
Experimental results are shown in \Cref{sec:exper}, and the conclusion follows in \Cref{sec:conclusion}.

\section{Preliminaries}
\label{sec:prelim}
Let $f_1$ and $f_2$ be two continuous PDFs on the real line $\R$.
The OVL between $f_1$ and $f_2$ is defined as
\begin{equation*}
\rho(f_1, f_2) = \int_{-\infty}^\infty \min\left\{f_1(x), f_2(x)\right\}\:dx.
\end{equation*}

\begin{definition}
\upshape
Suppose $g_1$ and $g_2$ are real continuous functions on $\R$.
Then we call $x\in\R$ a {\em crossover point} between $g_1$ and $g_2$ if there exist points $a, b$ in any neighborhood of $x$ such that $[g_1(a)-g_2(a)][g_1(b)-g_2(b)]<0$.
We also call $x\in\R$ a {\em coincidence point} between $g_1$ and $g_2$ if $g_1(x)=g_2(x)$.
The set of crossover points and that of coincidence points are denoted by $C(g_1, g_2)$ and $C'(g_1, g_2)$, respectively.
Note that $C(g_1, g_2)\subset C'(g_1, g_2)$.
\end{definition}

Under the assumption that $C'(f_1, f_2)$ is finite and the cardinality of $C(f_1, f_2)$ is known in advance, we present a decision tree-based method to estimate $\rho(f_1, f_2)$.
The rest of this section provides further notations and terminologies.
\begin{definition}\label{prob_space}
\upshape
Let $(\Omega, \mathcal{F}, \mathbb{P})$ be a probability space and $(X, Y):\Omega\to\R\times\{1,2\}$ a random variable with distribution $P$, defined as $P(A) = \mathbb{P}((X, Y)^{-1}(A))$ for all Borel sets $A\subset\R\times\{1,2\}$.
From the viewpoint of binary classification, the measurable functions $X:\Omega\to\R$ and $Y:\Omega\to\{1, 2\}$ can be regarded as explanatory and response variables, respectively.
Given a Borel set $B\subset\R$, we may simply write $P(X\in B)$ for $P(B\times\{1, 2\})$, $\pi_j$ for $P(\R\times\{j\})$, $F_j (x)$ for $P((-\infty, x]\times\{j\})/\pi_j$, $P(X\in B, Y=j)$ for $P(B\times\{j\})$, and $P(Y=j\mid X\in B)$ for $P(X\in B, Y=j)/P(X\in B)$, provided $\pi_j\ne 0$ and $P(X\in B)\ne 0$ as necessary.
\end{definition}

We shall consider the random variable $(X, Y)$ with
\begin{equation*}
F_j(x) = \int_{-\infty}^x f_j(t)\: dt \qquad (x\in\R;\ j=1, 2),
\end{equation*}
so that each $F_j$ is the cumulative distribution function (CDF) corresponding to the continuous PDF $f_j$.
We also define $F_j(-\infty)=0$ and $F_j(\infty)=1$ ($j=1,2$).



\begin{definition}
\upshape
Let $\Delta^1$ be the standard $1$-simplex,  which consists of all points $(a,b)\in\R^2$ such that $a+b=1$, $a\ge 0$, and $b\ge 0$.
An {\em impurity function} on $\Delta^1$ is a function $\iota$ with the following properties:
\begin{enumerate}
\item $\iota$ attains its maximum only at $(1/2, 1/2)$,
\item $\iota$ attains its minimum only at $(1, 0)$ and $(0, 1)$,
\item $\iota$ is a symmetric function, i.e., $\iota(a, b) = \iota(b, a)$.
\end{enumerate}
\end{definition}

\begin{definition}\label{V}
\upshape
For a positive integer $m$, let $\R_\le^m$ be the set of all $\bs{v}=(v_1,\ldots,v_m)\in\R^m$ with $v_1\le\cdots\le v_m$.
By the {\em $(m+1)$-ary split} on $\R$ at a point $\bs{v}\in\R_\le^m$, we mean the collection $S_{\bs{v}} = \{S_{\bs{v},1},\ldots,S_{\bs{v},m+1}\}$ with $S_{\bs{v},1}=\{x\in\R\mid x\le v_1\}$, $S_{\bs{v},m+1}=\{x\in\R\mid x>v_m\}$, and $S_{\bs{v},k}=\{x\in\R\mid v_{k-1}<x\le v_k\}$ for $k=2,\ldots,m$.
Note that each $S_{\bs{v},k}$ is a Borel set in $\R$, $S_{\bs{v},k}\cap S_{\bs{v},l} = \emptyset$ if $k\ne l$, and $S_{\bs{v},1}\cup\cdots\cup S_{\bs{v},m+1}=\R$.
\end{definition}

Using an impurity function $\iota$ on $\Delta^1$, we define the {\em impurity} of a Borel set $B\subset\R$ for the binary classification by
\begin{equation*}
I(B) = \begin{cases}
\ \iota\left(P(Y=1\mid X\in B), P(Y=2\mid X\in B)\right) & \mbox{if}\quad P(X\in B)>0,\\
\ 0 & \mbox{if}\quad P(X\in B)=0,
\end{cases}
\end{equation*}
and the {\em goodness} of $S_{\bs{v}}$ ($\bs{v}\in\R_\le^m$) by
\begin{equation}\label{delI}
\varDelta I(S_{\bs{v}}) = I(\R)-\sum_{k=1}^{m+1} P(X\in S_{\bs{v},k}) I(S_{\bs{v},k}),
\end{equation}
according to the conventional decision tree algorithm \cite{breiman84}.
If there exists $\bs{v}'\in\R_\le^m$ such that $\varDelta I(S_{\bs{v}'}) = \sup\varDelta I(S_{\bs v})$, where the supremum is over all ${\bs v}\in\R_\le^m$, then we call $S_{\bs{v}'}$ a {\em best $(m+1)$-ary split} on $\R$.

\section{Analytical framework}
\label{sec:anal}
In this section, we present the theoretical foundation of our method to calculate $C(\pi_1f_1, \pi_2f_2)$ and $\rho(\pi_1f_1, \pi_2f_2)$ under the assumptions that $C'(\pi_1f_1, \pi_2f_2)$ is finite, the cardinality $n$ of $C(\pi_1f_1, \pi_2f_2)$ is known in advance, $\pi_1>0$, and $\pi_2>0$.
We can obtain $C(f_1, f_2) = C(\pi_1f_1, \pi_2f_2)$ and $\rho(f_1, f_2) = 2\rho(\pi_1f_1, \pi_2f_2)$ if $\pi_1 = \pi_2 = 1/2$, which may be realized with sampling techniques, e.g., drawing the same number of samples from both the distributions corresponding to $f_1$ and $f_2$.
Here we use the setting of the previous section and, in addition, adopt the misclassification-based impurity function \cite{breiman84}, i.e., 
\begin{equation}\label{iota}
\iota(a, b) = 1-\max\left\{a, b\right\}\qquad ((a, b)\in\Delta^1).
\end{equation}

Suppose $C(\pi_1f_1, \pi_2f_2)=\emptyset$, or $n=0$.
Then either $\pi_1f_1\le\pi_2f_2$ or $\pi_1f_1\ge\pi_2f_2$ holds.
(Recall that $C'(\pi_1f_1, \pi_2f_2)$ is finite.)
In the former case, we have $\rho(\pi_1f_1, \pi_2f_2) = \pi_1$, and in the latter, $\rho(\pi_1f_1, \pi_2f_2) = \pi_2$.
Of note, $\pi_1 = \pi_2 = 1/2$ cannot occur here.

In the following, we assume $C(\pi_1f_1, \pi_2f_2)\ne\emptyset$, so that $n$ is a positive integer.
Put $C(\pi_1f_1, \pi_2f_2) = \{c_1,\ldots,c_n\}$ with $c_1<\cdots<c_n$, $\bs{c}=(c_1,\ldots,c_n)$, $c_0=-\infty$, and $c_{n+1}=\infty$.
The $(n+1)$-ary split on $\R$ at $\bs{c}$ is defined by $S_{\bs{c}} = \{S_{\bs{c},1},\ldots,S_{\bs{c},n+1}\}$ (see \Cref{V}).
\Cref{fig:def} is a schematic example to illustrate $C(\pi_1f_1, \pi_2f_2)$ and $\rho(\pi_1f_1, \pi_2f_2)$.

\begin{figure}[htpb]
\centering
\includegraphics[width=0.63\textwidth]{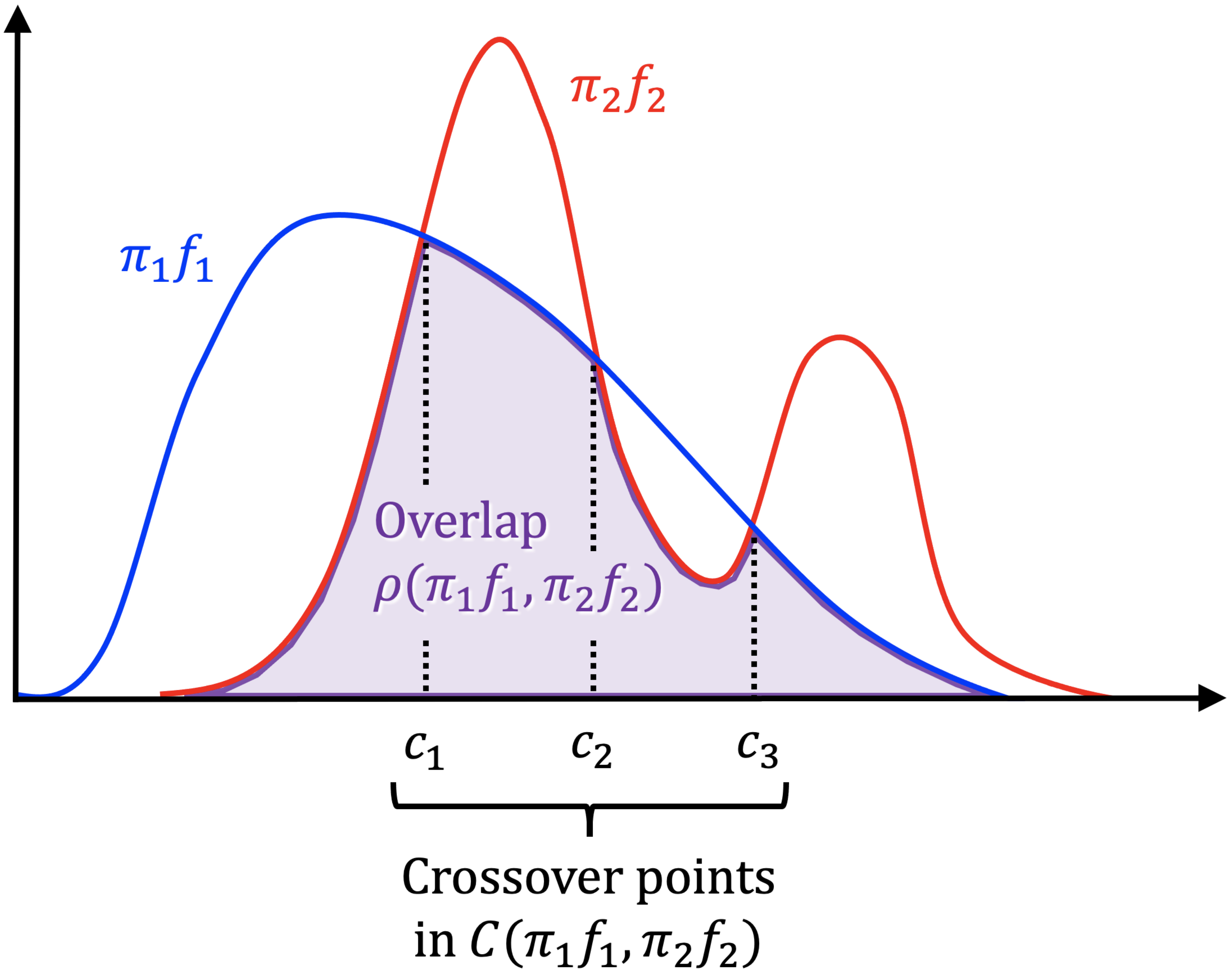}
\caption{A schematic example of $C(\pi_1f_1, \pi_2f_2)$ and $\rho(\pi_1f_1, \pi_2f_2)$.}
\label{fig:def}
\end{figure}

\begin{proposition}\label{delI_F}
For $\bs{v}=(v_1,\ldots,v_m)\in\R_\le^m$ with $m$ a positive integer, we have
\begin{align*}
\varDelta I(S_{\bs v})
&= \sum_{k=1}^{m+1}\max_j\left\{\pi_j\left[F_j(v_k)-F_j(v_{k-1})\right] \right\} - \max_j \left\{\pi_j\right\} \\
&= \sum_{k=1}^{m+1} \max_j\left\{\int_{v_{k-1}}^{v_k}\pi_jf_j(x)\:dx \right\}
- \max_j \left\{\pi_j\right\},
\end{align*}
where $v_0=-\infty$ and $v_{m+1}=\infty$.
\end{proposition}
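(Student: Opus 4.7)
The plan is to unfold the definitions from \Cref{sec:prelim} together with the explicit form \eqref{iota} of the impurity function; no deep argument is involved, so the task reduces to careful bookkeeping.

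First I would translate the class-conditional probabilities into the language of the $F_j$. From \Cref{prob_space} and $F_j(x)=\int_{-\infty}^x f_j(t)\,dt$, one has $P(X\in S_{\bs v,k}, Y=j) = \pi_j[F_j(v_k)-F_j(v_{k-1})]$, and therefore $P(X\in S_{\bs v,k}) = \sum_j \pi_j[F_j(v_k)-F_j(v_{k-1})]$. Whenever $P(X\in S_{\bs v,k})>0$, this yields
\begin{equation*}
P(Y=j \mid X\in S_{\bs v,k}) \;=\; \frac{\pi_j[F_j(v_k)-F_j(v_{k-1})]}{P(X\in S_{\bs v,k})}.
\end{equation*}

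Next, plugging $\iota(a,b)=1-\max\{a,b\}$ into the definition of $I$ and multiplying through by $P(X\in S_{\bs v,k})$, I obtain the key identity
\begin{equation*}
P(X\in S_{\bs v,k})\,I(S_{\bs v,k}) \;=\; P(X\in S_{\bs v,k}) - \max_j\bigl\{\pi_j[F_j(v_k)-F_j(v_{k-1})]\bigr\}.
\end{equation*}
I would pause to verify that this identity remains valid when $P(X\in S_{\bs v,k})=0$: there $I(S_{\bs v,k})$ is set to $0$ by fiat, and both $\pi_j[F_j(v_k)-F_j(v_{k-1})]$ vanish (being nonnegative with zero sum), so both sides equal $0$. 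This edge case is essentially the only place where care is required.

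Finally, summing the identity over $k=1,\dots,m+1$ and using $\sum_k P(X\in S_{\bs v,k})=1$ (since $S_{\bs v}$ partitions $\R$) together with $I(\R)=1-\max_j\{\pi_j\}$ gives, upon substitution into \eqref{delI}, the first claimed formula. The second formula then follows immediately by rewriting $F_j(v_k)-F_j(v_{k-1}) = \int_{v_{k-1}}^{v_k} f_j(x)\,dx$. The main---and rather modest---obstacle I anticipate is simply the degenerate-cell check above, since $I$ is defined piecewise there; everything else is a mechanical substitution.
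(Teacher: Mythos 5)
Your proof is correct and follows essentially the same route as the paper: unfold the definitions, use $P(S_{\bs v,k}\times\{j\})=\pi_j[F_j(v_k)-F_j(v_{k-1})]$ to rewrite each weighted impurity term as $P(X\in S_{\bs v,k})-\max_j\{\pi_j[F_j(v_k)-F_j(v_{k-1})]\}$, and sum. The only difference is cosmetic: you spell out the degenerate cells with $P(X\in S_{\bs v,k})=0$ explicitly, whereas the paper handles them by restricting the sum to cells of positive probability.
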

\begin{proof}
From \Cref{delI,iota}, we have
\begin{equation}\label{delI2}
\varDelta I(S_{\bs v}) = \sum_k P(X\in S_{\bs{v},k})\max_j\left\{P(Y=j\mid X\in S_{\bs{v},k})\right\} - \max_j\left\{P(Y=j\mid X\in\R)\right\},
\end{equation}
where the sum is over all $k$ with $P(X\in S_{\bs{v},k})>0$.
Since $P(Y=j\mid X\in S_{\bs{v},k}) = P(S_{\bs{v},k}\times\{j\})/P(X\in S_{\bs{v},k})$ and $P(S_{\bs{v},k}\times\{j\}) = \pi_j[F_j(v_k)-F_j(v_{k-1})]$,
we obtain
\begin{equation*}
P(X\in S_{\bs{v},k})\max_j\left\{P(Y=j\mid X\in S_{\bs{v},k})\right\} = \max_j\left\{\pi_j\left[F_j(v_k)-F_j(v_{k-1})\right] \right\}.
\end{equation*}
As for the last term of \Cref{delI2}, we have $P(Y=j\mid X\in\R)=\pi_j$ by definition.
\end{proof}

The following corollary is immediate from \Cref{delI_F}.

\begin{corollary}\label{g_cor}
For $\bs{v}=(v_1,\ldots,v_m)\in\R_\le^m$ with $m$ a positive integer, let 
\begin{equation*}
g_{\bs v}(x) = \pi_{j_k}f_{j_k}(x) \qquad (x\in S_{\bs{v},k};\ k=1,\ldots,m+1)
\end{equation*}
where $j_k\in\argmax_j\,\{\pi_j[F_j(v_k)-F_j(v_{k-1})]\}$.
Let $g=\max\,\{\pi_1f_1, \pi_2f_2\}$.
Then 
\begin{equation*}
\varDelta I(S_{\bs v}) =\int_{-\infty}^\infty g_{\bs v}(x)\:dx - \max_j \left\{\pi_j\right\}, \quad
\varDelta I(S_{\bs c}) =\int_{-\infty}^\infty g(x)\:dx - \max_j \left\{\pi_j\right\}.
\end{equation*}
Furthermore, $g_{\bs v}\le g$ and $\varDelta I(S_{\bs v}) \le \varDelta I(S_{\bs c})$.
\end{corollary}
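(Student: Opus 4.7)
The plan is to read off the two identities directly from \Cref{delI_F} and then deduce the final inequality by integrating the obvious pointwise bound $g_{\bs v}\le g$.

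For the first identity, on each cell $S_{\bs v,k}$ the function $g_{\bs v}$ is by definition the constant choice $\pi_{j_k}f_{j_k}$, so $\int_{v_{k-1}}^{v_k}g_{\bs v}(x)\,dx = \pi_{j_k}[F_{j_k}(v_k)-F_{j_k}(v_{k-1})]$; the defining property of $j_k$ as a maximizer makes this equal to $\max_j\{\int_{v_{k-1}}^{v_k}\pi_jf_j(x)\,dx\}$. Summing over $k=1,\ldots,m+1$ and invoking the second line of \Cref{delI_F} gives the first equality.

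For the second identity, I would apply the first formula with $\bs v=\bs c$ after verifying that $g_{\bs c}=g$ on each $S_{\bs c,k}$. The key point is that the open interval $(c_{k-1},c_k)$ contains no crossover points, so the continuous function $\pi_1f_1-\pi_2f_2$ cannot change sign there: by the intermediate value theorem any sign change on an interval would yield a point every neighborhood of which meets both signs, i.e., a crossover point. Hence some single index $j_k^\star\in\{1,2\}$ dominates throughout $S_{\bs c,k}$, and $g=\pi_{j_k^\star}f_{j_k^\star}$ on that cell. Integrating this dominance gives $j_k^\star\in\argmax_j\{\pi_j[F_j(c_k)-F_j(c_{k-1})]\}$; moreover the argmax is a singleton, since equality of the two integrals would force the continuous nonnegative difference $\pi_{j_k^\star}f_{j_k^\star}-\pi_{j'}f_{j'}$ to vanish identically on $S_{\bs c,k}$, making every point of that interval a coincidence point and contradicting the finiteness of $C'(\pi_1f_1,\pi_2f_2)$. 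Thus $j_k=j_k^\star$ and $g_{\bs c}=g$ everywhere, which combined with the first identity at $\bs v=\bs c$ gives the second.

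Finally, since $g_{\bs v}(x)\in\{\pi_1f_1(x),\pi_2f_2(x)\}$ at every $x$, the bound $g_{\bs v}\le g$ is immediate, and integrating then subtracting $\max_j\{\pi_j\}$ from both sides yields $\varDelta I(S_{\bs v})\le\varDelta I(S_{\bs c})$. The only real subtlety is the uniqueness of $j_k^\star$ in the second step, which is precisely where the finiteness hypothesis on $C'(\pi_1f_1,\pi_2f_2)$ enters essentially; the rest of the argument is a routine bookkeeping from \Cref{delI_F}.
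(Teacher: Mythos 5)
Your proof is correct and takes essentially the paper's intended route: the paper declares the corollary immediate from \Cref{delI_F}, and you simply fill in the details (the cellwise identity defining $g_{\bs v}$, the observation that one of $\pi_1f_1,\pi_2f_2$ dominates pointwise on each cell of $S_{\bs c}$ because no crossover point lies in its interior, and the integrated pointwise bound $g_{\bs v}\le g$). The only slight gloss is that your intermediate-value step producing a crossover point from a sign change tacitly uses the standing finiteness of $C'(\pi_1f_1,\pi_2f_2)$ to rule out the difference vanishing on a subinterval; since that assumption is in force throughout \Cref{sec:anal}, the argument stands as written.
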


\begin{lemma}\label{delI_lem}
Suppose $m$ is a positive integer, $\bs{v}=(v_1,\ldots,v_m)\in\R_\le^m$, $v_0=-\infty$, and $v_{m+1}=\infty$.
If $v_{k-1}<c_p<v_k$ for some $k\in\{1,\ldots,m+1\}$ and $p\in\{1,\ldots,n\}$, then $\varDelta I(S_{\bs v}) < \varDelta I(S_{\bs c})$.
\end{lemma}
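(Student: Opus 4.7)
The plan is to subtract the two identities in \Cref{g_cor} to obtain
\begin{equation*}
\varDelta I(S_{\bs c}) - \varDelta I(S_{\bs v}) = \int_{-\infty}^\infty \left[g(x) - g_{\bs v}(x)\right] dx,
\end{equation*}
and then to show this integral is strictly positive. Since \Cref{g_cor} already gives $g_{\bs v}\le g$ pointwise, it is enough to produce a single open interval on which $g-g_{\bs v}$ is strictly positive; because both $g$ and $g_{\bs v}$ are continuous there, the integral over that interval will automatically be strictly positive, and the rest of the real line contributes a nonnegative amount.

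I will extract such an open interval from a neighborhood of $c_p$. On the slab $S_{\bs v,k}=(v_{k-1},v_k]$ the function $g_{\bs v}$ is identically $\pi_{j_k}f_{j_k}$ for some fixed $j_k\in\{1,2\}$, while $g=\max\{\pi_1f_1,\pi_2f_2\}$. Because $c_p$ is a crossover point of $\pi_1f_1$ and $\pi_2f_2$ and lies strictly inside $(v_{k-1},v_k)$, the definition of a crossover point supplies, in every sufficiently small neighborhood of $c_p$ (small enough to sit inside $(v_{k-1},v_k)$), a point $a$ with $\pi_1f_1(a)>\pi_2f_2(a)$ and a point $b$ with $\pi_1f_1(b)<\pi_2f_2(b)$. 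Whichever value $j_k$ happens to take, at least one of $a,b$ is a point $x_0$ at which $g(x_0)>\pi_{j_k}f_{j_k}(x_0)=g_{\bs v}(x_0)$: if $j_k=1$ take $x_0=b$, if $j_k=2$ take $x_0=a$.

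With such a witness $x_0$ in hand, continuity of $\pi_1f_1$ and $\pi_2f_2$ lets me extend the strict inequality $g(x_0)>g_{\bs v}(x_0)$ to an open neighborhood $U$ of $x_0$ inside $(v_{k-1},v_k)$. Integrating the nonnegative function $g-g_{\bs v}$ and bounding below by its integral over $U$, where it is continuous and strictly positive, yields $\int_{-\infty}^\infty(g-g_{\bs v})\,dx>0$, hence $\varDelta I(S_{\bs v})<\varDelta I(S_{\bs c})$.

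The only mildly subtle point is the case analysis in the second paragraph: one must correctly pair the two possible values of $j_k$ with the two kinds of points provided by the crossover definition. The rest — subtracting the two identities of \Cref{g_cor}, promoting a pointwise strict inequality to an open interval via continuity, and then integrating — is routine.
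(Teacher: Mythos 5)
Your argument is correct and follows essentially the same route as the paper: both rest on the identity $\varDelta I(S_{\bs c})-\varDelta I(S_{\bs v})=\int_{-\infty}^{\infty}\left[g(x)-g_{\bs v}(x)\right]dx$ together with $g_{\bs v}\le g$ from \Cref{g_cor}, and both conclude by exhibiting an open subinterval of $(v_{k-1},v_k)$ near $c_p$ on which $g-g_{\bs v}$ is strictly positive. The only difference is in how that subinterval is produced: the paper uses the finiteness of $C'(\pi_1f_1,\pi_2f_2)$ to fix the sign of $\pi_1f_1-\pi_2f_2$ on one side of $c_p$, whereas you take a single witness point supplied by the crossover definition and enlarge it to an open set by continuity, which is an equally valid (and slightly leaner) localization.
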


\begin{proof}
Since $C'(\pi_1f_1, \pi_2f_2)$ is finite, there exists a neighborhood $U$ of $c_p$ such that $U\subset (v_{k-1}, v_k)$ and $U\cap C'(\pi_1f_1, \pi_2f_2) = \{c_p\}$.
Then, $[\pi_1f_1(a)-\pi_2f_2(a)][\pi_1f_1(b)-\pi_2f_2(b)]<0$ for all $a,b\in U$ with $a<c_p<b$.
Without loss of generality, we assume that $\pi_1f_1(a)<\pi_2f_2(a)$ and $\pi_2f_2(b)<\pi_1f_1(b)$.
If $g_{\bs v} = \pi_1f_1$ on $S_{\bs{v},k}$, then $g_{\bs v}<g$ on the open interval $(a, c_p)$, so that
\begin{equation*}
\varDelta I(S_{\bs c}) - \varDelta I(S_{\bs v})
\ge \int_a^{c_p}\left[g(x)-g_{\bs v}(x) \right]\:dx > 0.
\end{equation*}
The proof for the case $g_{\bs v} = \pi_2f_2$ on $S_{\bs{v},k}$ is similar.
\end{proof}

\begin{proposition}\label{delISc}
The supremum of $\varDelta I(S_{\bs v})$ over $\bs{v}\in\R_\le^n$ is uniquely attained at $\bs{v} = \bs{c}$.
\end{proposition}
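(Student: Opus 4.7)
The plan is to combine \Cref{g_cor} with \Cref{delI_lem}. \Cref{g_cor} already gives $\varDelta I(S_{\bs v}) \le \varDelta I(S_{\bs c})$ for every $\bs{v}\in\R_\le^n$, so $\bs{c}$ attains the supremum; what remains is uniqueness. Concretely, I would fix an arbitrary $\bs{v}=(v_1,\ldots,v_n)\in\R_\le^n$ with $\bs{v}\ne\bs{c}$ and derive $\varDelta I(S_{\bs v}) < \varDelta I(S_{\bs c})$, with the goal of producing some crossover point $c_p$ that sits strictly inside an interval $(v_{k-1}, v_k)$ of the split (where $v_0=-\infty$ and $v_{n+1}=\infty$), so that \Cref{delI_lem} applies verbatim.

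The combinatorial core is to show $\{c_1,\ldots,c_n\}\not\subset\{v_1,\ldots,v_n\}$ (as sets). I would argue by cardinality: the $c_p$'s are $n$ distinct reals, while $|\{v_1,\ldots,v_n\}|\le n$, so any inclusion would force set equality, which in turn forces the $v_k$'s to be pairwise distinct; since both tuples are then sorted lists of the same $n$-element set, we would get $\bs{v}=\bs{c}$, contradicting the assumption. Hence some $c_p$ satisfies $c_p\notin\{v_1,\ldots,v_n\}$.

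Next I would locate such a $c_p$ in the partition $\{S_{\bs{v},1},\ldots,S_{\bs{v},n+1}\}$ of $\R$ provided by \Cref{V}. Picking the unique $k$ with $c_p\in S_{\bs{v},k}$, the containment gives $v_{k-1}<c_p\le v_k$ (or $v_n<c_p<\infty$ when $k=n+1$); since $c_p\ne v_k$ by construction and $c_p$ is a finite real, in every case we conclude $v_{k-1}<c_p<v_k$. \Cref{delI_lem} then yields $\varDelta I(S_{\bs v}) < \varDelta I(S_{\bs c})$, completing the argument.

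I do not expect a genuine obstacle here: \Cref{g_cor,delI_lem} do the analytic heavy lifting, and the only mildly subtle point is handling possible repetitions among the $v_k$'s in the cardinality step, which is resolved by the argument above.
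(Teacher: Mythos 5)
Your proposal is correct and follows essentially the same route as the paper: find a crossover point $c_p\notin\{v_1,\ldots,v_n\}$ lying strictly inside some interval $(v_{k-1},v_k)$ and invoke \Cref{delI_lem}, which already yields the strict inequality $\varDelta I(S_{\bs v})<\varDelta I(S_{\bs c})$. The cardinality argument and the case check $v_{k-1}<c_p<v_k$ simply make explicit the steps the paper's terse proof leaves implicit.
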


In other words, $S_{\bs c}$ is the unique best $(n+1)$-ary split on $\R$.

\begin{proof}
If $\bs{v}\ne\bs{c}$, then $c_p\notin\{v_1,\dots,v_n\}$ for some $p$.
Hence $v_{k-1}<c_p<v_k$ for some $k$ as in the assumption of \Cref{delI_lem}, so that $\varDelta I(S_{\bs v}) < \varDelta I(S_{\bs c})$.
\end{proof}

\begin{proposition}\label{delISc2}
Suppose $m$ is a positive integer with $m<n$. 
Then for every $\bs{v}\in\R_\le^m$, $\varDelta I(S_{\bs v})<\varDelta I(S_{\bs c})$.
\end{proposition}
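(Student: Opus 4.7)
The plan is to reduce the claim directly to \Cref{delI_lem}, exactly as in the proof of \Cref{delISc}. First I would observe that the hypothesis $m<n$ forces the set $\{v_1,\ldots,v_m\}$ to have cardinality at most $m$, so it cannot contain all $n$ distinct crossover points $c_1,\ldots,c_n$. Consequently there exists some $p\in\{1,\ldots,n\}$ with $c_p\notin\{v_1,\ldots,v_m\}$.

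Next, since $v_1<\cdots<v_m$ together with $v_0=-\infty$ and $v_{m+1}=\infty$ partition $\R$ into the blocks $S_{\bs v,1},\ldots,S_{\bs v,m+1}$, and since $c_p$ equals none of the $v_i$, it must lie strictly between some $v_{k-1}$ and $v_k$, i.e.\ $v_{k-1}<c_p<v_k$. Invoking \Cref{delI_lem} then yields $\varDelta I(S_{\bs v})<\varDelta I(S_{\bs c})$, which is the desired conclusion.

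There is essentially no obstacle here: the analytic content has already been absorbed into \Cref{delI_lem}, and the only new ingredient is the trivial pigeonhole observation that $m$ coordinates cannot cover $n>m$ distinct points. The same remark explains why the argument used for \Cref{delISc} generalizes verbatim to the present under-parameterized setting.
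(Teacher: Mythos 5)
Your proof is correct and is essentially the paper's own argument: the pigeonhole observation that $m<n$ forces some $c_p\notin\{v_1,\ldots,v_m\}$, hence $v_{k-1}<c_p<v_k$ for some $k$, and \Cref{delI_lem} then gives the strict inequality. One negligible remark: $\R_\le^m$ only requires $v_1\le\cdots\le v_m$ (ties are allowed), not strict ordering as you wrote, but this changes nothing since $c_p\notin\{v_1,\ldots,v_m\}$ still places $c_p$ strictly inside one of the blocks $S_{\bs v,k}$.
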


\begin{proof}
Since $m<n$, $c_p\notin\{v_1,\dots,v_m\}$ for some $p$.
The proof is similar as above.
\end{proof}

Now we see that $C(\pi_1f_1, \pi_2f_2)$ can be obtained by finding $\bs{v}\in\R_\le^n$ that yields the maximum of $\varDelta I(S_{\bs v})$.
Given $C(\pi_1f_1, \pi_2f_2)$, we have
\begin{equation}\label{rhoeq}
\rho(\pi_1f_1, \pi_2f_2) = \sum_{k=1}^{n+1}\int_{c_{k-1}}^{c_k} \min_j\left\{\pi_jf_j(x)\right\}\:dx
= \sum_{k=1}^{n+1}\min_j\left\{P(X\in S_{\bs{c},k}, Y=j)\right\}.
\end{equation}

\section{Numerical framework}
\label{sec:numer}
Here we show how to estimate $C(\pi_1f_1, \pi_2f_2)$ and $\rho(\pi_1f_1, \pi_2f_2)$, given independent and identically distributed (i.i.d.) random variables $(X_1, Y_1), \ldots, (X_N, Y_N)$ with the distribution $P$ on $\R\times\{1, 2\}$.
Let us keep the setting of the previous section.

\begin{definition}
\upshape
For a Borel set $B\subset \R$ and $j\in\{1, 2\}$, put
\begin{align*}
N_X(B) = \#\{i\mid X_i\in B\},\quad N_Y(j) = \#\{i\mid Y_i=j\} \\
N_{XY}(B, j) = \#\{i\mid X_i\in B, Y_i=j\},\quad \wh{\pi}_{j,N} = N_Y(j)/N\\
\wh{P}_N(X\in B) = N_X(B)/N,\quad \wh{P}_N(X\in B, Y=j) = N_{XY}(B, j)/N, \\
\wh{P}_N(Y=j\mid X\in B) = N_{XY}(B, j)/N_X(B) \quad \mbox{if}\quad \wh{P}_N(X\in B)>0,
\end{align*}
where $\#$ denotes the cardinality of a set.
Define
\begin{equation*}
\wh{I}_N(B) = \begin{cases}
\ \iota\left(\wh{P}_N(Y=1\mid X\in B), \wh{P}_N(Y=2\mid X\in B)\right) & \mbox{if}\quad \wh{P}_N(X\in B)>0,\\
\ 0 & \mbox{if}\quad \wh{P}_N(X\in B)=0
\end{cases}
\end{equation*}
and 
\begin{equation}\label{delIh}
\varDelta \wh{I}_N(S_{\bs{v}}) = \wh{I}_N(\R)-\sum_{k=1}^{m+1} \wh{P}_N(X\in S_{\bs{v},k}) \wh{I}_N(S_{\bs{v},k}) \qquad ({\bs v}\in \R_\le^m;\ m=1,2,\ldots)
\end{equation}
as the estimators of $I(B)$ and $\varDelta I(S_{\bs{v}})$, respectively.
\end{definition}

\begin{definition}\label{hR}
\upshape
Let $X_{N:1}\le\cdots\le X_{N:N}$ be the order statistics of $X_1,\ldots,X_N$,
\begin{align*}
&Z_i = (X_{N:i}+X_{N:i+1})/2 \qquad (i=1,\ldots,N-1), \\
&\wh{\R}_N^m = \left\{(Z_{i_1}, \ldots, Z_{i_m})\mid 1\le i_1\le\cdots\le i_m\le N-1\right\}\qquad (m=1,2,\ldots).
\end{align*}
To avoid trivialities, we set $Z_1=X_1$ if $N=1$.
Note that $\wh{\R}_N^m\subset\R_\le^m$ $(m=1,2,\ldots)$ and recall that $n=\#C(\pi_1f_1, \pi_2f_2)$.
Define $\wh{\bs{v}}_N \in \argmax_{\bs{v}\in\wh{\R}_N^n}\{\varDelta \wh{I}_N(S_{\bs{v}})\}$ and
\begin{equation}\label{rhoheq}
\wh{\rho}_{\bs{v},N} = \sum_{k=1}^{m+1}\min_j\left\{\wh{P}_N(X\in S_{\bs{v},k}, Y=j)\right\}\qquad ({\bs v}\in \R_\le^m;\ m=1,2,\ldots).
\end{equation}
We propose $\wh{\rho}_{\wh{\bs{v}}_N,N}$ as an estimator of $\rho(\pi_1f_1, \pi_2f_2)$.
\end{definition}

\begin{definition}\label{ascmp}
\upshape
Let $\xi$ be a random variable and $\{\xi_i\}$ a sequence of random variables on $(\Omega,\mathcal{F},\mathbb{P})$ taking values in a separable metric space $(A, d)$.
We say that $\{\xi_i\}$ {\it converges almost surely} to $\xi$ if
\begin{equation*}
\mathbb{P}\left(\left\{\omega\in\Omega\relmiddle| \lim_{i\to\infty}\xi_i(\omega)=\xi(\omega)\right\}\right) = 1.
\end{equation*}
We also say that $\{\xi_i\}$ {\it converges completely} to $\xi$ if
\begin{equation*}
\sum_{i=1}^\infty\mathbb{P}\left(\left\{\omega\in\Omega\relmiddle| d\left(\xi_i(\omega),\xi(\omega)\right)>\epsilon\right\}\right) < \infty
\end{equation*}
for any $\epsilon>0$.
\end{definition}

\begin{remark}\label{rem:compas}
{\normalfont (See \cite{hsu47} for reference.)}
In \Cref{ascmp}, $\{\xi_i\}$ converges almost surely to $\xi$ if and only if
\begin{equation*}
\lim_{l\to\infty}\mathbb{P}\left(\bigcup_{i=l}^\infty\left\{\omega\in\Omega\relmiddle|d\left(\xi_i(\omega),\xi(\omega)\right)>\epsilon\right\}\right)=0
\end{equation*}
for any $\epsilon>0$.
If $\{\xi_i\}$ converges completely to $\xi$, then $\{\xi_i\}$ converges almost surely to $\xi$.
\end{remark}


\begin{theorem}\label{v_conv}
As $N$ tends to $\infty$, $\wh{\bs{v}}_N$ converges completely to $\bs{c}$.
\end{theorem}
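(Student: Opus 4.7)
The plan is to adapt the classical M-estimator consistency argument to the decision-tree setting: pair uniform complete convergence of $\varDelta\wh{I}_N(S_{\bs v})$ to $\varDelta I(S_{\bs v})$ with a well-separated unique maximum of the limit at $\bs v=\bs c$. The uniqueness of the limiting maximum is already supplied by \Cref{delISc}, which is precisely the identification condition such an argument demands.

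First I would rewrite both objectives in terms of (empirical) sub-distribution functions. Setting $G_j(x):=\pi_j F_j(x)$ and $\wh G_{j,N}(x):=N_{XY}((-\infty,x],j)/N$, the derivation behind \Cref{delI_F} applied with $\wh P_N$ in place of $P$ yields
\begin{equation*}
\varDelta\wh{I}_N(S_{\bs v}) = \sum_{k=1}^{n+1}\max_j\{\wh G_{j,N}(v_k)-\wh G_{j,N}(v_{k-1})\} - \max_j\{\wh\pi_{j,N}\},
\end{equation*}
which parallels the formula of \Cref{delI_F} for $\varDelta I(S_{\bs v})$. Since each $\wh G_{j,N}$ is an empirical CDF of the i.i.d.\ bounded indicators $\1\{X_i\le x,\ Y_i=j\}$, the Dvoretzky--Kiefer--Wolfowitz inequality gives $\mathbb{P}(\sup_x|\wh G_{j,N}(x)-G_j(x)|>\epsilon)\le 2e^{-2N\epsilon^2}$, and Hoeffding's inequality supplies the analogous bound for $|\wh\pi_{j,N}-\pi_j|$. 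Applying $|\max_j a_j-\max_j b_j|\le\max_j|a_j-b_j|$ term by term in the displayed identity then produces an exponential tail bound on $\sup_{\bs v\in\R_\le^n}|\varDelta\wh{I}_N(S_{\bs v})-\varDelta I(S_{\bs v})|$ that is summable in $N$, so this supremum converges to $0$ completely.

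Next I would quantify the separation of the maximum. Since $\R_\le^n$ is not compact, I would pass to the compactification $\overline{\R_\le^n}:=\{(v_1,\ldots,v_n):-\infty\le v_1\le\cdots\le v_n\le\infty\}$ (with, say, an $\arctan$-pullback metric). Continuity of $F_j$ lets $\bs v\mapsto\varDelta I(S_{\bs v})$ extend continuously to $\overline{\R_\le^n}$, with coalescing or infinite coordinates simply producing empty cells in $S_{\bs v}$. A point $\bs v\ne\bs c$ in $\overline{\R_\le^n}$ either lies in $\R_\le^n\setminus\{\bs c\}$, in which case \Cref{delISc} gives $\varDelta I(S_{\bs v})<\varDelta I(S_{\bs c})$, or, after merging coincident and infinite coordinates, corresponds to some $\bs v'\in\R_\le^m$ with $m<n$, in which case \Cref{delISc2} gives the same strict inequality. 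Compactness together with uniqueness of the maximum therefore implies, for every $\epsilon>0$,
\begin{equation*}
\delta(\epsilon):=\varDelta I(S_{\bs c})-\sup\{\varDelta I(S_{\bs v}) : \bs v\in\overline{\R_\le^n},\ \|\bs v-\bs c\|>\epsilon\}>0.
\end{equation*}

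To finish, I note that $\varDelta\wh{I}_N(S_{\bs v})$ is constant on the cells of $\R_\le^n$ delimited by consecutive order statistics, so restricting the argmax to $\wh\R_N^n$ is cost-free: $\wh{\bs v}_N$ is actually a maximizer of $\varDelta\wh{I}_N$ over all of $\R_\le^n$. Because each $c_k$ lies in the interior of the support of $X$, for large $N$ some midpoint $Z_i$ falls within any preassigned distance of $c_k$ with exponentially summable failure probability, so one can select $\bs v_N^\star\in\wh\R_N^n$ with $\varDelta I(S_{\bs v_N^\star})\ge \varDelta I(S_{\bs c})-\delta(\epsilon)/3$. On the event that the uniform error of Step 2 is at most $\delta(\epsilon)/3$, the chain $\varDelta\wh{I}_N(S_{\wh{\bs v}_N})\ge\varDelta\wh{I}_N(S_{\bs v_N^\star})\ge\varDelta I(S_{\bs c})-2\delta(\epsilon)/3$ combined with the separation forces $\|\wh{\bs v}_N-\bs c\|\le\epsilon$; summability of all the tail probabilities then yields complete convergence of $\wh{\bs v}_N$ to $\bs c$. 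The main obstacle will be the compactification step: one must verify carefully that components of $\bs v$ escaping to $\pm\infty$ or coalescing genuinely correspond to the fewer-way splits covered by \Cref{delISc2}, and that continuity through these degenerations upgrades the strict pointwise inequalities of \Cref{delISc,delISc2} into the quantitative gap $\delta(\epsilon)>0$.
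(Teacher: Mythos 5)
Your proof is correct and rests on the same two ingredients as the paper's argument (\Cref{suphconv,Kexist,supnotK,DVn,v_conv_app}): uniform complete convergence of $\varDelta\wh{I}_N(S_{\bs v})$ to $\varDelta I(S_{\bs v})$ over $\R_\le^n$, plus a well-separated unique maximizer at $\bs c$, combined through the standard argmax-consistency argument; the differences lie in how each ingredient is obtained. For the uniform step the paper invokes complete-convergence forms of the Glivenko--Cantelli theorem and the strong law (\Cref{Fhconv,pihconv}) and assembles them in \Cref{suphconv}, whereas your DKW/Hoeffding route yields the same summability with explicit exponential rates. For the separation step the paper does not compactify: it constructs in \Cref{Kexist} a compact set $K\subset\R_\le^n$ outside of which $\varDelta I$ stays below its maximum, which in turn needs the fact that $\sup_{\bs v\in\R_\le^m}\varDelta I(S_{\bs v})$ for $m<n$ is attained at a tuple of crossover points (\Cref{hwhv,hwhv2}), and then applies the general argmax lemma \Cref{supnotK}. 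Your compactification bypasses \Cref{hwhv,hwhv2} altogether: boundary points (infinite or coalescing coordinates) reduce to splits with fewer than $n$ cut points, which \Cref{delISc2} rules out pointwise (with \Cref{delISc} for finite points and the trivial fully degenerate case), and compactness upgrades this to the gap $\delta(\epsilon)>0$; just take the supremum over the closure of $\left\{\bs v : \|\bs v-\bs c\|>\epsilon\right\}$ so that it is attained, and verify continuity of the extension, which follows from the continuity of each $F_j$ and its limits at $\pm\infty$. Finally, your intermediate point $\bs v_N^\star$ is redundant: once you note, as the paper does inside the proof of \Cref{DVn}, that the maximum of $\varDelta\wh{I}_N$ over $\wh{\R}_N^n$ equals its maximum over $\R_\le^n$, you can compare $\varDelta\wh{I}_N(S_{\wh{\bs v}_N})\ge\varDelta\wh{I}_N(S_{\bs c})$ directly and drop the argument about midpoints accumulating near each $c_k$.
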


\begin{theorem}\label{rho_conv}
As $N$ tends to $\infty$, $\wh{\rho}_{\wh{\bs{v}}_N,N}$ converges completely to $\rho(\pi_1f_1, \pi_2f_2)$.
\end{theorem}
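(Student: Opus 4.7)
The plan is to reduce the claim to the complete convergence $\wh{\bs v}_N\to\bs c$ from \Cref{v_conv}, augmented by uniform complete convergence of the empirical sub-distributions. Write $F_j^\ast(x):=\pi_jF_j(x)$ and $\wh F_{j,N}(x):=\wh P_N(X\le x,Y=j)$. By \Cref{rhoeq,rhoheq}, both $\rho(\pi_1f_1,\pi_2f_2)$ and $\wh\rho_{\wh{\bs v}_N,N}$ have the same shape: a sum over $k=1,\dots,n+1$ of $\min_j\{F_j^\ast(v_k)-F_j^\ast(v_{k-1})\}$, empirical or population, with $v_0=-\infty$ and $v_{n+1}=\infty$. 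It therefore suffices to prove complete convergence $\wh F_{j,N}(\wh v_{N,k})\to F_j^\ast(c_k)$ for each $k$ and $j$, and then to propagate this through $\min$ and finite summation.

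First I would establish a uniform Glivenko--Cantelli-type bound: for each fixed $j$, the Dvoretzky--Kiefer--Wolfowitz inequality applied to the monotone sub-distribution process yields
\begin{equation*}
\mathbb P\!\left(\sup_{x\in\R}\bigl|\wh F_{j,N}(x)-F_j^\ast(x)\bigr|>\epsilon\right)\le C\,e^{-cN\epsilon^2}
\end{equation*}
for universal constants $c,C>0$, which is summable in $N$; hence $\sup_x|\wh F_{j,N}-F_j^\ast|$ converges completely to $0$.

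Next I would combine this with \Cref{v_conv} through the triangle inequality
\begin{equation*}
\bigl|\wh F_{j,N}(\wh v_{N,k})-F_j^\ast(c_k)\bigr|\le\sup_x\bigl|\wh F_{j,N}(x)-F_j^\ast(x)\bigr|+\bigl|F_j^\ast(\wh v_{N,k})-F_j^\ast(c_k)\bigr|.
\end{equation*}
The first summand vanishes completely by the previous step. For the second, continuity of $F_j^\ast$ (inherited from continuity of $f_j$) yields, for every $\epsilon>0$, a $\delta>0$ with $\{|F_j^\ast(\wh v_{N,k})-F_j^\ast(c_k)|>\epsilon\}\subset\{|\wh v_{N,k}-c_k|>\delta\}$, whose probabilities are summable by \Cref{v_conv}. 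Taking differences over consecutive indices then gives $\wh P_N(X\in S_{\wh{\bs v}_N,k},Y=j)\to P(X\in S_{\bs c,k},Y=j)$ completely; the boundary cells use the additional fact $\wh\pi_{j,N}\to\pi_j$ completely, which is immediate from Hoeffding's inequality.

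Finally, since $(a,b)\mapsto\min\{a,b\}$ is $1$-Lipschitz in each argument and finite sums preserve complete convergence via a union bound on $\epsilon/(n+1)$-events, applying these two operations to \Cref{rhoeq,rhoheq} closes the argument. The main technical obstacle is the composition step: because $\wh v_{N,k}$ is itself random, pointwise complete convergence of $\wh F_{j,N}$ is insufficient, and one must deploy the uniform estimate of the second paragraph in concert with \Cref{v_conv}.
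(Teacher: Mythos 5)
Your proposal is correct and takes essentially the same route as the paper's own proof: both bound $\left|\wh{\rho}_{\wh{\bs{v}}_N,N}-\rho(\pi_1f_1,\pi_2f_2)\right|$ using the $1$-Lipschitz property of $\min$, a uniform Glivenko--Cantelli-type bound on the empirical (sub-)distribution functions, continuity of $F_j$ near the crossover points $c_k$, and the complete convergence $\wh{\bs{v}}_N\to\bs{c}$ from \Cref{v_conv}, concluding with a union bound and summability. The only cosmetic difference is that you treat $\pi_jF_j$ as a single sub-distribution and invoke DKW/Hoeffding for the exponential tail, whereas the paper separates $\wh{\pi}_{j,N}$ (complete-convergence strong law) from the conditional empirical CDFs $\wh{F}_{j,N}$ (complete-convergence Glivenko--Cantelli).
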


The proofs of \Cref{v_conv,rho_conv} are given in \Cref{AppA}.
While $\wh{\bs{v}}_N$ and $\wh{\rho}_{\wh{\bs{v}}_N,N}$ are treated as random variables, their measurability is in fact nontrivial and will be discussed in \Cref{measurability}.

\begin{remark}\label{ind_N}
For each $N=1,2,\ldots$, let $(X_1^{(N)},Y_1^{(N)}),\ldots,(X_N^{(N)},Y_N^{(N)})$ be i.i.d.\ random variables with the distribution $P$ on $\R\times\{1,2\}$ to calculate $\wh{\bs{v}}_N^{(N)}\in\wh{\R}_N^n$ and $\wh{\rho}_{\wh{\bs{v}}_N^{(N)},N}^{(N)}$ in the same way as $\wh{\bs{v}}_N$ and $\wh{\rho}_{\wh{\bs{v}}_N,N}$ in \Cref{hR}, respectively.
By \Cref{v_conv,rho_conv}, we have
\begin{equation*}
\sum_{N=1}^\infty\mathbb{P}\left(\left\{\omega\in\Omega\relmiddle| \left\|\wh{\bs{v}}_N^{(N)}-\bs{c}\right\|>\epsilon\right\}\right)
=\sum_{N=1}^\infty\mathbb{P}\left(\left\{\omega\in\Omega\relmiddle| \left\|\wh{\bs{v}}_N-\bs{c}\right\|>\epsilon\right\}\right)<\infty
\end{equation*}
and
\begin{align*}
&\sum_{N=1}^\infty\mathbb{P}\left(\left\{\omega\in\Omega\relmiddle| \left|\wh{\rho}_{\wh{\bs{v}}_N^{(N)},N}^{(N)}-\rho(\pi_1f_1,\pi_2f_2)\right|>\epsilon\right\}\right)\\
&=\sum_{N=1}^\infty\mathbb{P}\left(\left\{\omega\in\Omega\relmiddle| \left|\wh{\rho}_{\wh{\bs{v}}_N,N}-\rho(\pi_1f_1,\pi_2f_2)\right|>\epsilon\right\}\right)<\infty
\end{align*}
for any $\epsilon>0$, where $\|\cdot\|$ denotes the Euclidean norm.
Hence $\wh{\bs{v}}_N^{(N)}$ and $\wh{\rho}_{\wh{\bs{v}}_N^{(N)},N}^{(N)}$, as well as $\wh{\bs{v}}_N$ and $\wh{\rho}_{\wh{\bs{v}}_N,N}$, converge completely to $\bs{c}$ and $\rho(\pi_1f_1,\pi_2f_2)$, respectively.
\end{remark}

\section{Numerical experiments}
\label{sec:exper}
Here we perform numerical simulations to illustrate the results in \Cref{sec:numer}.
A set of random samples $\{(X_i, Y_i)\mid 1\le i\le N\}$ was simulated under the following two conditions: first, 
\begin{align*}
\pi_1 = 2/3,\qquad \pi_2 = 1/3,\qquad f_1 = \nu_{-1,1},\qquad f_2 = \nu_{1,1},
\end{align*}
and second, 
\begin{align*}
\pi_1 = \pi_2 = 0.5,\qquad f_1 = 0.5\nu_{-1,1}+0.5\nu_{1,1},\qquad f_2 = 0.8\nu_{0,1}+0.2\tau_{0, 0.5},
\end{align*}
where $\nu_{\mu,\sigma}$ represents the Gaussian PDF defined as
\begin{equation}\label{gausspdf}
\nu_{\mu,\sigma}(x) = \frac{1}{\sqrt{2\pi}\sigma}\exp\left(-\frac{(x-\mu)^2}{2\sigma^2} \right)\qquad (x\in\R)
\end{equation}
and $\tau_{a,b}$ is the triangular PDF defined as 
\begin{equation}\label{trianglepdf}
\tau_{a,b}(x) = \begin{cases}
\ 4(x-a)/(b-a)^2 & \mbox{if}\quad a\le x\le (a+b)/2,\\
\ 4(b-x)/(b-a)^2 & \mbox{if}\quad (a+b)/2< x\le b, \\
\ 0 & \mbox{otherwise}.
\end{cases}\qquad (x\in\R;\ a<b).
\end{equation}
Then, we can analytically calculate
\begin{align}
&C(\pi_1f_1, \pi_2f_2) = \{c_1\} = \left\{(\log 2)/2\right\}\simeq \{0.347\}\label{first_C},\\
&\rho(\pi_1f_1, \pi_2f_2) = \left[2-2\Phi\left(c_1+1\right) + \Phi\left(c_1-1\right)\right]/3 \simeq 0.145\label{first_rho}
\end{align}
for the first case, and 
\begin{align}
&C(\pi_1f_1, \pi_2f_2) = \{c_1, c_2\} = \cosh^{-1}\left(0.8\sqrt{\mathrm{e}}\right) \simeq \{-0.779, 0.779\}\label{second_C}, \\
&\rho(\pi_1f_1, \pi_2f_2) = 0.8 - 0.5\Phi(c_1+1) + 0.5\Phi(c_2+1) - 0.8\Phi(c_2) \simeq 0.362\label{second_rho}
\end{align}
for the second case, where $\Phi$ denotes the cumulative distribution function of the standard normal distribution given by
\begin{equation}\label{Phidef}
\Phi(x) = \frac{1}{\sqrt{2\pi}}\int_{-\infty}^x \exp\left(-\frac{t^2}{2} \right)\: dt \qquad (x\in\R).
\end{equation}
See \Cref{AppB} for the proof of \Cref{first_C,first_rho,second_C,second_rho}.
With the knowledge that $n=1$ and $n=2$ for the first and second cases, respectively, we numerically calculated $\wh{\bs{v}}_N$ and $\wh{\rho}_{\wh{\bs{v}}_N,N}$ for each case with $N=10000$.
The subsets $\{(X_i, Y_i)\mid 1\le i\le 100\}$ and $\{(X_i, Y_i)\mid 1\le i\le 1000\}$ were also applied to calculate $\wh{\bs{v}}_N$ and $\wh{\rho}_{\wh{\bs{v}}_N,N}$.
This trial (from the generation of 10000 random samples) was repeated independently for 30 times, and the convergence of $\wh{\bs{v}}_N$ and $\wh{\rho}_{\wh{\bs{v}}_N,N}$ was visually assessed.

\begin{figure}[htpb]
\centering
\includegraphics[width=0.84\textwidth]{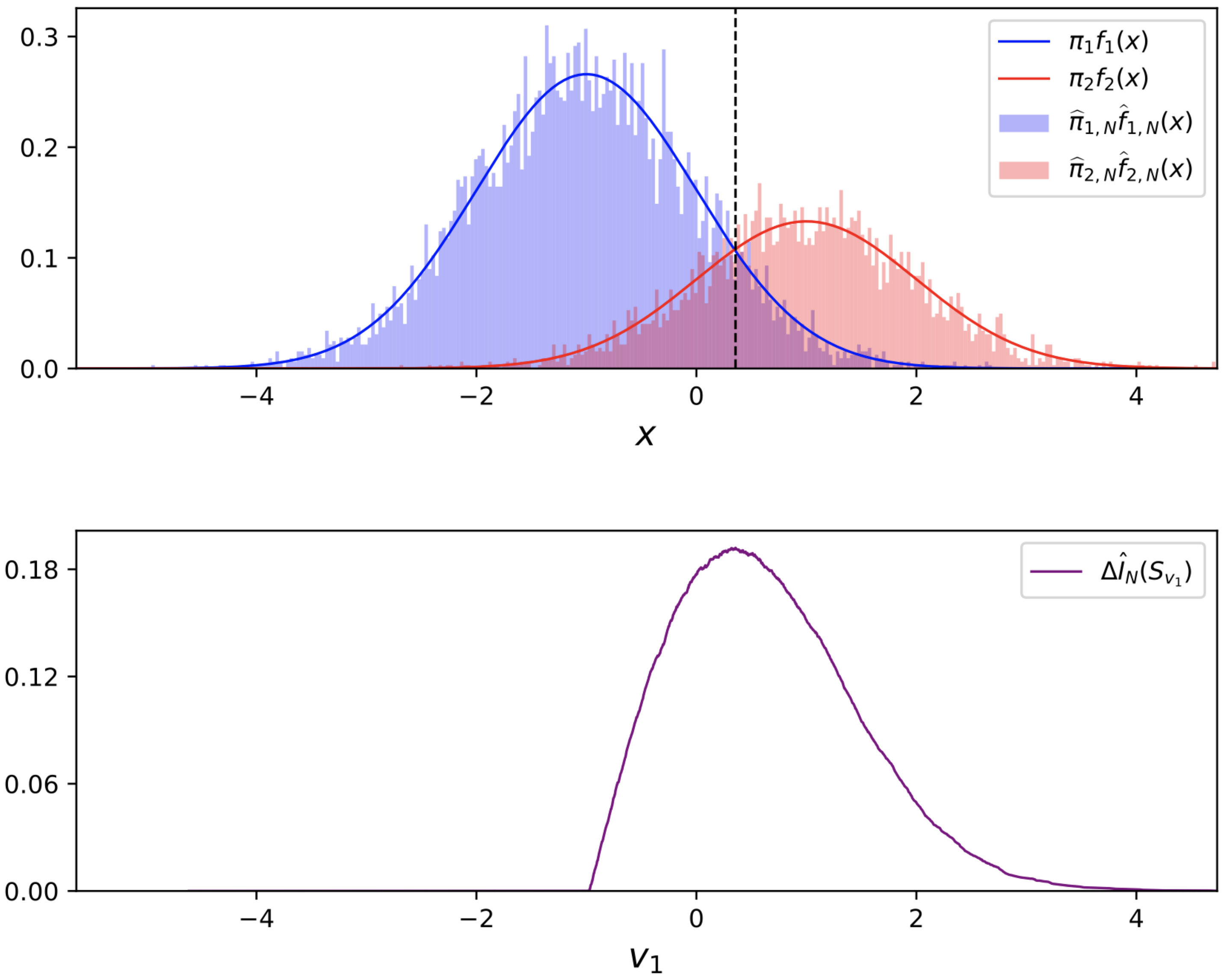}
\caption{In the upper row, $\pi_1f_1$ and $\pi_2f_2$ for the first case are plotted. Normalized histograms corresponding to $\pi_1f_1$ and $\pi_2f_2$ (denoted by $\wh{\pi}_{1,N}\wh{f}_{1,N}$ and $\wh{\pi}_{2,N}\wh{f}_{2,N}$, respectively) were generated using a representative set of $N=10000$ random samples, $\{(X_i, Y_i)\mid 1\le i\le 10000\}$. The vertical dotted line indicates the estimated crossover point $\wh{v}_{1,N}\approx 0.355$, where its theoretical counterpart is $c_1\simeq 0.347$. In the lower row, $\varDelta \wh{I}_N(S_{v_1})$ for all $v_1\in \wh{\R}_N^1$ are plotted. The overlap $\rho(\pi_1f_1, \pi_2f_2)\simeq 0.145$ was estimated as $\wh{\rho}_{\wh{\bs{v}}_N,N}\approx 0.140$.}
\label{fig:single_bin}
\end{figure}

\begin{figure}[htpb]
\centering
\includegraphics[width=0.84\textwidth]{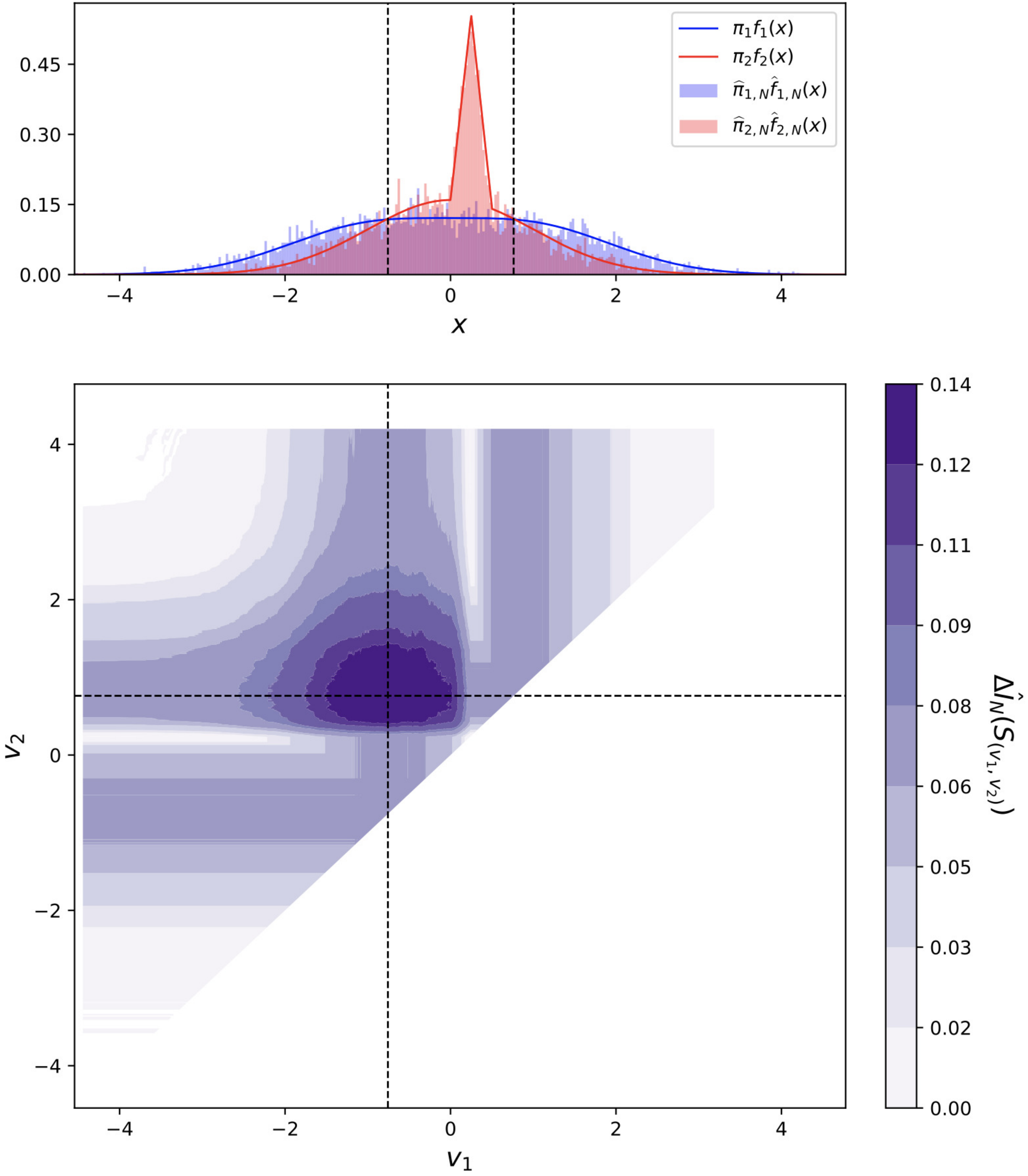}
\caption{In the upper row, $\pi_1f_1$ and $\pi_2f_2$ for the second case are plotted. Normalized histograms corresponding to $\pi_1f_1$ and $\pi_2f_2$ (denoted by $\wh{\pi}_{1,N}\wh{f}_{1,N}$ and $\wh{\pi}_{2,N}\wh{f}_{2,N}$, respectively) were generated using a representative set of $N=10000$ random samples, $\{(X_i, Y_i)\mid 1\le i\le 10000\}$. The dotted lines indicate the estimated crossover points $\wh{v}_{1,N}\approx -0.757$ and $\wh{v}_{2,N}\approx 0.763$, where their theoretical counterparts are $c_1\simeq -0.779$ and $c_2\simeq 0.779$, respectively. In the lower row, $\varDelta \wh{I}_N(S_{(v_1, v_2)})$ for all $(v_1, v_2)\in \wh{\R}_N^2$ are visualized in a heatmap. The overlap $\rho(\pi_1f_1, \pi_2f_2)\simeq 0.362$ was estimated as $\wh{\rho}_{\wh{\bs{v}}_N,N}\approx 0.361$.}
\label{fig:single_ter}
\end{figure}

To begin with, we exhibit a representative sample distribution ($N=10000$) for each case with the calculated values of $\wh{\bs{v}}_N$ and $\wh{\rho}_{\wh{\bs{v}}_N,N}$ (\Cref{fig:single_bin,fig:single_ter}).
As a result of the 30 trials for each case, $\wh{\bs{v}}_N$ and $\wh{\rho}_{\wh{\bs{v}}_N,N}$ appear to converge to $\bs{c}$ and $\rho(\pi_1f_1,\pi_2f_2)$, respectively, as $N$ increases (\Cref{fig:multseq_bin,fig:multseq_ter}).

Similarly, we next performed 30 independent trials for each case to simulate three independent sets of random samples, of the forms $\{(X_i, Y_i)\mid 1\le i\le 100\}$, $\{(X_i', Y_i')\mid 1\le i\le 1000\}$, and $\{(X_i'', Y_i'')\mid 1\le i\le 10000\}$. 
Each set was used to calculate $\wh{\bs{v}}_N^{(N)}$ and $\wh{\rho}_{\wh{\bs{v}}_N^{(N)},N}^{(N)}$ (see \Cref{ind_N}).
Then, in both the cases, $\wh{\bs{v}}_N^{(N)}$ and $\wh{\rho}_{\wh{\bs{v}}_N^{(N)},N}^{(N)}$ appear to converge to $\bs{c}$ and $\rho(\pi_1f_1,\pi_2f_2)$, respectively, as $N$ increases (\Cref{fig:multind_bin,fig:multind_ter}).

\begin{figure}[htpb]
\centering
\includegraphics[width=0.95\textwidth]{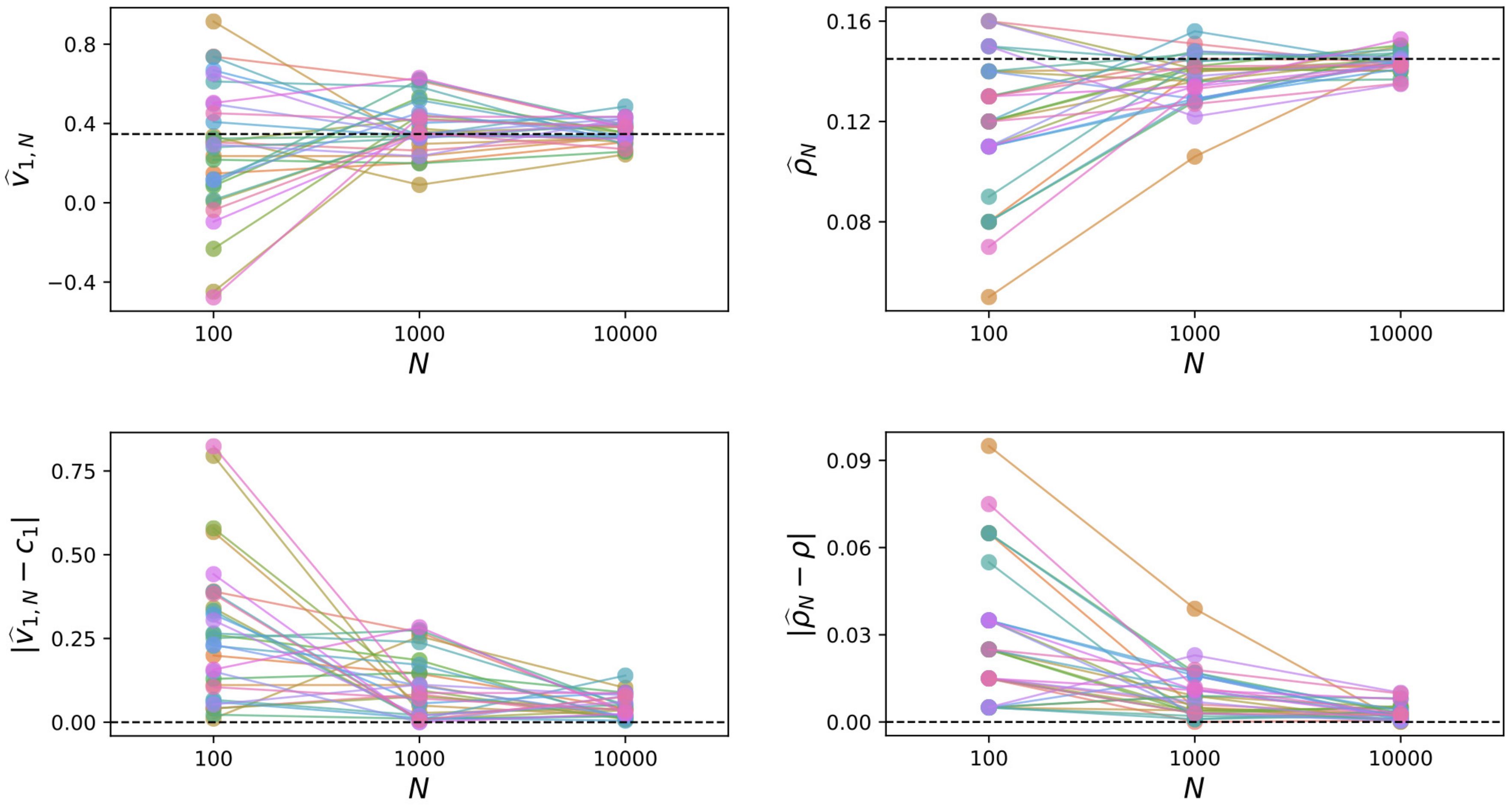}
\caption{In the first case, $30$ independent trials were performed to simulate $10000$ random samples: $(X_1, Y_1),\ldots,(X_{10000}, Y_{10000})$. For each trial, $\{(X_i, Y_i)\mid 1\le i\le 100\}$, $\{(X_i, Y_i)\mid 1\le i\le 1000\}$, and $\{(X_i, Y_i)\mid 1\le i\le 10000\}$ were used to calculate $\wh{v}_{1,N}$, $|\wh{v}_{1,N}-c_1|$, $\wh{\rho}_{\wh{\bs{v}}_N,N}$, and $|\wh{\rho}_{\wh{\bs{v}}_N,N}-\rho(\pi_1f_1,\pi_2f_2)|$. Each dotted line indicates the expected value: $c_1\simeq 0.347$ for $\wh{v}_{1,N}$, $0$ for $|\wh{v}_{1,N}-c_1|$, $\rho(\pi_1f_1, \pi_2f_2)\simeq 0.145$ for $\wh{\rho}_{\wh{\bs{v}}_N,N}$, and $0$ for $|\wh{\rho}_{\wh{\bs{v}}_N,N}-\rho(\pi_1f_1,\pi_2f_2)|$. In this figure, $\wh{\rho}_{\wh{\bs{v}}_N,N}$ and $\rho(\pi_1f_1,\pi_2f_2)$ are abbreviated as $\wh{\rho}_N$ and $\rho$, respectively.}
\label{fig:multseq_bin}
\end{figure}

\begin{figure}[htpb]
\centering
\includegraphics[width=0.95\textwidth]{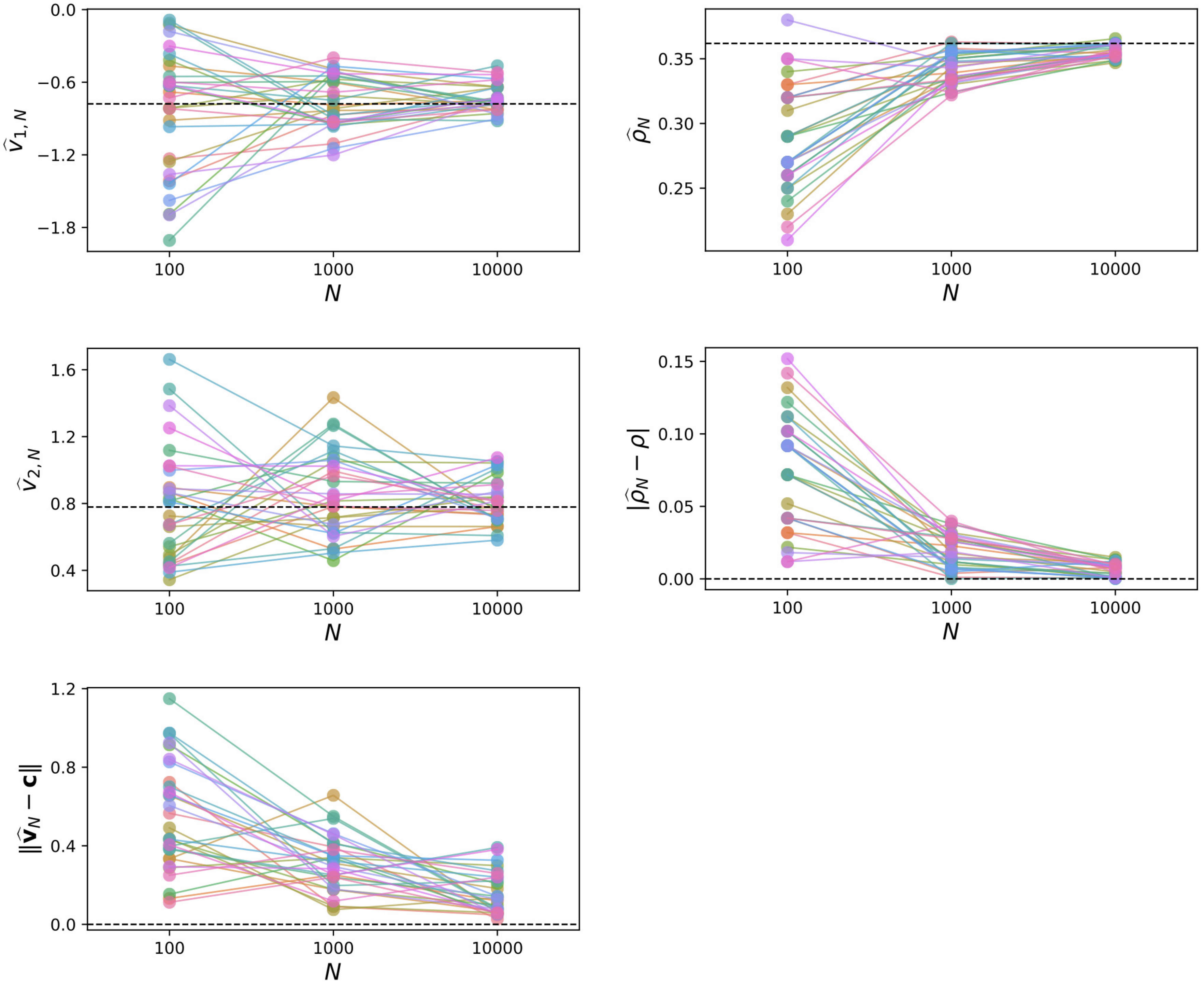}
\caption{In the second case, $30$ independent trials were performed to simulate $10000$ random samples: $(X_1, Y_1),\ldots,(X_{10000}, Y_{10000})$. For each trial, $\{(X_i, Y_i)\mid 1\le i\le 100\}$, $\{(X_i, Y_i)\mid 1\le i\le 1000\}$, and $\{(X_i, Y_i)\mid 1\le i\le 10000\}$ were used to calculate $\wh{v}_{1,N}$, $\wh{v}_{2,N}$, $\|\wh{\bs{v}}_N-\bs{c}\|$, $\wh{\rho}_{\wh{\bs{v}}_N,N}$, and $|\wh{\rho}_{\wh{\bs{v}}_N,N}-\rho(\pi_1f_1,\pi_2f_2)|$. Each dotted line indicates the expected value: $c_1\simeq -0.779$ for $\wh{v}_{1,N}$, $c_2\simeq 0.779$ for $\wh{v}_{2,N}$, $0$ for $\|\wh{\bs{v}}_N-\bs{c}\|$, $\rho(\pi_1f_1, \pi_2f_2)\simeq 0.362$ for $\wh{\rho}_{\wh{\bs{v}}_N,N}$, and $0$ for $|\wh{\rho}_{\wh{\bs{v}}_N,N}-\rho(\pi_1f_1,\pi_2f_2)|$. In this figure, $\wh{\rho}_{\wh{\bs{v}}_N,N}$ and $\rho(\pi_1f_1,\pi_2f_2)$ are abbreviated as $\wh{\rho}_N$ and $\rho$, respectively.}
\label{fig:multseq_ter}
\end{figure}

\begin{figure}[htpb]
\centering
\includegraphics[width=0.95\textwidth]{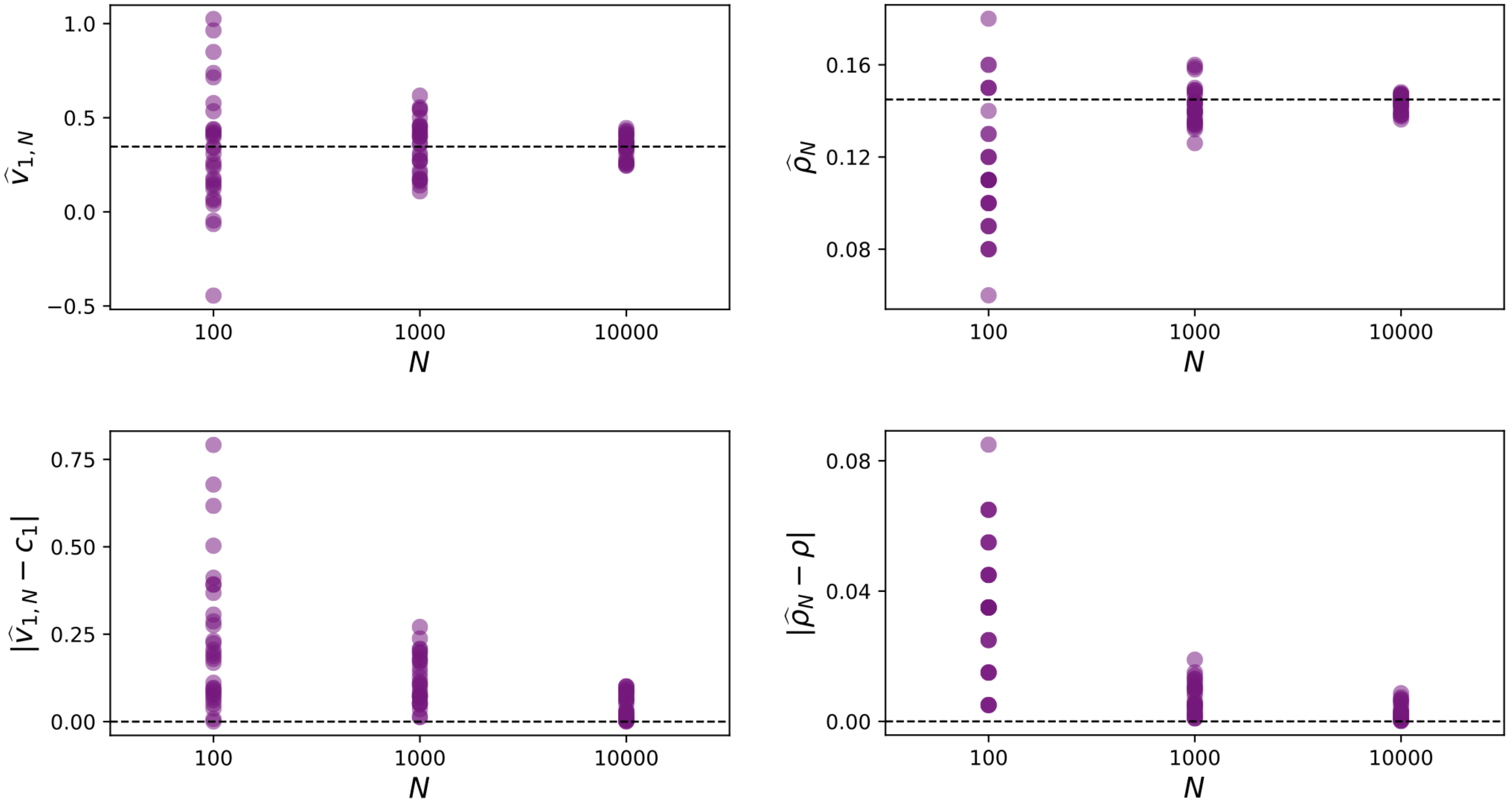}
\caption{In the first case, $30$ independent trials were performed to simulate three independent sets of random samples, of the forms $\{(X_i, Y_i)\mid 1\le i\le 100\}$, $\{(X_i', Y_i')\mid 1\le i\le 1000\}$, and $\{(X_i'', Y_i'')\mid 1\le i\le 10000\}$. Each set was used to calculate $\wh{v}_{1,N}$, $|\wh{v}_{1,N}-c_1|$, $\wh{\rho}_{\wh{\bs{v}}_N,N}$, and $|\wh{\rho}_{\wh{\bs{v}}_N,N}-\rho(\pi_1f_1,\pi_2f_2)|$. Note that the superscript $(N)$ in \Cref{ind_N} is omitted here. The dotted lines indicate the expected values: $c_1\simeq 0.347$ for $\wh{v}_{1,N}$, $0$ for $|\wh{v}_{1,N}-c_1|$, $\rho(\pi_1f_1, \pi_2f_2)\simeq 0.145$ for $\wh{\rho}_{\wh{\bs{v}}_N,N}$, and $0$ for $|\wh{\rho}_{\wh{\bs{v}}_N,N}-\rho(\pi_1f_1,\pi_2f_2)|$. In this figure, $\wh{\rho}_{\wh{\bs{v}}_N,N}$ and $\rho(\pi_1f_1,\pi_2f_2)$ are abbreviated as $\wh{\rho}_N$ and $\rho$, respectively.}
\label{fig:multind_bin}
\end{figure}

\begin{figure}[htpb]
\centering
\includegraphics[width=0.95\textwidth]{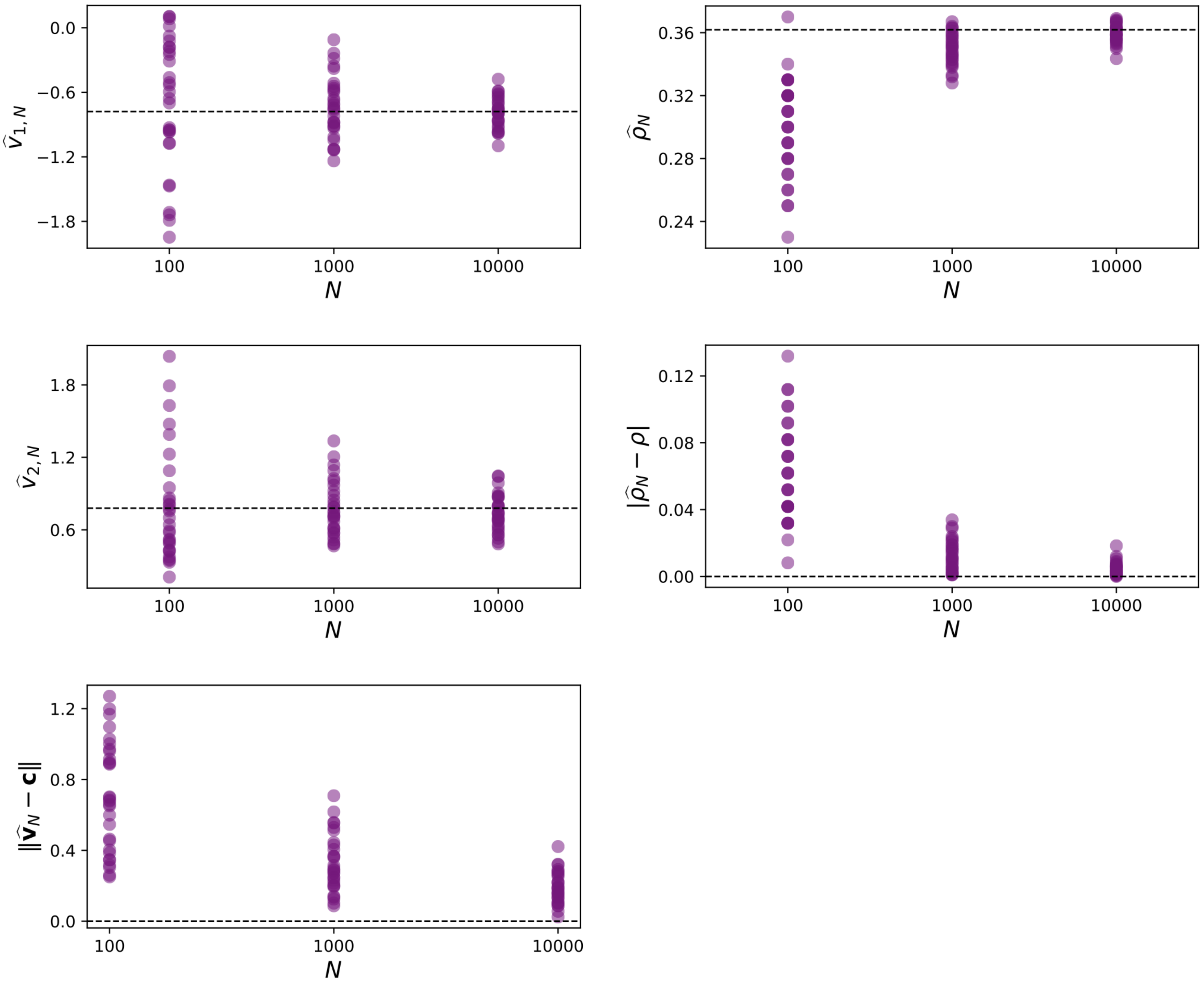}
\caption{In the second case, $30$ independent trials were performed to simulate three independent sets of random samples, of the forms $\{(X_i, Y_i)\mid 1\le i\le 100\}$, $\{(X_i', Y_i')\mid 1\le i\le 1000\}$, and $\{(X_i'', Y_i'')\mid 1\le i\le 10000\}$. Each set was used to calculate $\wh{v}_{1,N}$, $\wh{v}_{2,N}$, $\|\wh{\bs{v}}_N-\bs{c}\|$, $\wh{\rho}_{\wh{\bs{v}}_N,N}$, and $|\wh{\rho}_{\wh{\bs{v}}_N,N}-\rho(\pi_1f_1,\pi_2f_2)|$. Note that the superscript $(N)$ in \Cref{ind_N} is omitted here. The dotted lines indicate the expected values: $c_1\simeq -0.779$ for $\wh{v}_{1,N}$, $c_2\simeq 0.779$ for $\wh{v}_{2,N}$, $0$ for $\|\wh{\bs{v}}_N-\bs{c}\|$, $\rho(\pi_1f_1, \pi_2f_2)\simeq 0.362$ for $\wh{\rho}_{\wh{\bs{v}}_N,N}$, and $0$ for $|\wh{\rho}_{\wh{\bs{v}}_N,N}-\rho(\pi_1f_1,\pi_2f_2)|$. In this figure, $\wh{\rho}_{\wh{\bs{v}}_N,N}$ and $\rho(\pi_1f_1,\pi_2f_2)$ are abbreviated as $\wh{\rho}_N$ and $\rho$, respectively.}
\label{fig:multind_ter}
\end{figure}

\section{Conclusion}
\label{sec:conclusion}
In this paper, we propose a new nonparametric framework to calculate OVL based on a decision tree algorithm.
The estimators of crossover points and overlaps for continuous PDFs were shown to converge to the expected values (both analytically and numerically).
However, there remain several issues to be addressed:
\begin{enumerate}
\item We have not established a general way to know the number $n$ of crossover points (which is required to be known in advance), though we may estimate it beforehand by obtaining partial information about the distributions (e.g., there exist precisely two crossover points between any two normal distributions with different variances) or by using some numerical tools like histograms.
\item Our method has not been applied to real data or compared numerically with other nonparametric methods, though the following arguments seem to exemplify the theoretical advantages of ours over the previous ones (described in detail in \cite{schmid06}): (i) our OVL estimator depends only on the rank statistics of $X_1,...,X_N$ (labeled by $Y_1,\ldots,Y_N$, respectively), as is consistent with the nature of OVL, while the OVL estimators in \cite{schmid06} depend not only on the rank statistics (\cite[pp. 1588--1589]{schmid06}); (ii) our OVL estimator converges completely to the true value (\cref{rho_conv}). 
\end{enumerate}
Further studies on these problems are needed for the practical use of our method.

\begin{appendix}
\section{Additional proofs}\label{AppA}
\Cref{v_conv,rho_conv} will be proved in this section.
We shall take over the notations in \Cref{sec:numer} and, in addition, write $h(\bs{v})$ and $\wh{h}_N(\bs{v})$ in place of $\varDelta I(S_{\bs{v}})$ and $\varDelta \wh{I}_N(S_{\bs{v}})$, respectively.

\begin{definition}\label{def:A1}
\upshape
For $j\in\{1,2\}$ and $x\in \R$, define
\begin{equation*}
\wh{F}_{j,N}(x) = \begin{cases}
\ N_{XY}((-\infty, x], j)/N_Y(j) & \mbox{if}\quad N_Y(j)>0,\\
\ 0 & \mbox{if}\quad N_Y(j)=0.
\end{cases}
\end{equation*}
We also define $\wh{F}_{j,N}(-\infty)=0$ and $\wh{F}_{j,N}(\infty)=1$.
\end{definition}

\begin{proposition}\label{hhat}
For $\bs{v}=(v_1,\ldots,v_m)\in\R_\le^m$ with $m$ a positive integer,
\begin{equation*}
\wh{h}_N(\bs{v})= \sum_{k=1}^{m+1}\max_j\left\{\wh{\pi}_{j,N}\left[\wh{F}_{j,N}(v_k)-\wh{F}_{j,N}(v_{k-1})\right] \right\} - \max_j \left\{\wh{\pi}_{j,N}\right\},
\end{equation*}
where $v_0=-\infty$, $v_{m+1}=\infty$, $\wh{F}_{j,N}(v_0)=0$, and $\wh{F}_{j,N}(v_{m+1})=1$.
\end{proposition}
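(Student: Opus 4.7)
The plan is to mirror the proof of \Cref{delI_F} verbatim, replacing $P$, $\pi_j$, and $F_j$ by their empirical counterparts $\wh{P}_N$, $\wh{\pi}_{j,N}$, and $\wh{F}_{j,N}$. Concretely, starting from \Cref{delIh} and the misclassification impurity \Cref{iota}, I would first rewrite
\[
\wh{h}_N(\bs{v}) = \wh{I}_N(\R) - \sum_{k}\wh{P}_N(X\in S_{\bs{v},k})\Bigl[1-\max_j\wh{P}_N(Y=j\mid X\in S_{\bs{v},k})\Bigr],
\]
where the sum ranges over those $k$ with $\wh{P}_N(X\in S_{\bs{v},k})>0$.

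Next I would identify, for each such $k$, the key algebraic identity
\[
\wh{P}_N(X\in S_{\bs{v},k})\wh{P}_N(Y=j\mid X\in S_{\bs{v},k}) = \frac{N_{XY}(S_{\bs{v},k},j)}{N} = \wh{\pi}_{j,N}\bigl[\wh{F}_{j,N}(v_k)-\wh{F}_{j,N}(v_{k-1})\bigr],
\]
using the definitions of $\wh{\pi}_{j,N}$ and $\wh{F}_{j,N}$ in \Cref{def:A1}; the case $N_Y(j)=0$ is handled separately but makes both sides zero. Pulling the maximum over $j$ through the positive factor $\wh{P}_N(X\in S_{\bs{v},k})$ then yields
\[
\wh{P}_N(X\in S_{\bs{v},k})\max_j\wh{P}_N(Y=j\mid X\in S_{\bs{v},k}) = \max_j\bigl\{\wh{\pi}_{j,N}[\wh{F}_{j,N}(v_k)-\wh{F}_{j,N}(v_{k-1})]\bigr\}.
\]
Since both $\wh{P}_N(X\in S_{\bs{v},k})$ and the right-hand side vanish when $\wh{P}_N(X\in S_{\bs{v},k})=0$, the sum can be extended to all $k=1,\ldots,m+1$ without change.

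Finally, using $\sum_k \wh{P}_N(X\in S_{\bs{v},k})=1$ (because $S_{\bs{v}}$ partitions $\R$) and $\wh{I}_N(\R)=1-\max_j\wh{\pi}_{j,N}$ (taking $N\ge 1$ so that $\wh{P}_N(X\in\R)=1$), the two appearances of $1$ cancel and we obtain the desired expression. The argument is purely a bookkeeping computation; the only subtlety is being careful with the edge cases $\wh{P}_N(X\in S_{\bs{v},k})=0$ and $N_Y(j)=0$, and I do not expect any real obstacle beyond checking that these degenerate terms contribute zero on both sides.
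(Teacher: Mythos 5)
Your proposal is correct and follows essentially the same route as the paper's own proof: rewrite $\wh{h}_N(\bs{v})$ via \Cref{delIh} and \Cref{iota}, use $\wh{P}_N(X\in S_{\bs{v},k})\,\wh{P}_N(Y=j\mid X\in S_{\bs{v},k}) = N_{XY}(S_{\bs{v},k},j)/N = \wh{\pi}_{j,N}[\wh{F}_{j,N}(v_k)-\wh{F}_{j,N}(v_{k-1})]$ to pull the maximum through, and note that terms with $\wh{P}_N(X\in S_{\bs{v},k})=0$ (or $N_Y(j)=0$) contribute zero on both sides. Your handling of the degenerate cases is, if anything, slightly more explicit than the paper's.
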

\begin{proof}
From \Cref{delIh}, we have
\begin{equation}\label{hhat2}
\begin{aligned}
\wh{h}_N(\bs{v}) &= \sum_k \wh{P}_N(X\in S_{\bs{v},k})\max_j\left\{\wh{P}_N(Y=j\mid X\in S_{\bs{v},k})\right\}\\ 
&\qquad - \max_j\left\{\wh{P}_N(Y=j\mid X\in\R)\right\},
\end{aligned}
\end{equation}
where the sum is over all $k$ with $N_X(S_{\bs{v},k})>0$.
Since $\wh{P}_N(X\in S_{\bs{v},k})=N_X(S_{\bs{v},k})/N$, $\wh{P}_N(Y=j\mid X\in S_{\bs{v},k}) = N_{XY}(S_{\bs{v},k}, j)/N_X(S_{\bs{v},k})$, and $N_{XY}(S_{\bs{v},k}, j) = N\wh{\pi}_{j,N}[\wh{F}_{j,N}(v_k)-\wh{F}_{j,N}(v_{k-1})]$,
we obtain
\begin{equation*}
\wh{P}_N(X\in S_{\bs{v},k})\max_j\left\{\wh{P}_N(Y=j\mid X\in S_{\bs{v},k})\right\} = \max_j\left\{\wh{\pi}_{j,N}\left[\wh{F}_{j,N}(v_k)-\wh{F}_{j,N}(v_{k-1})\right] \right\}.
\end{equation*}
As for the last term of \Cref{hhat2}, we have $\wh{P}_N(Y=j\mid X\in\R)=\wh{\pi}_{j,N}$ by definition.
\end{proof}

\begin{corollary}
For $\bs{v}\in\R_\le^m$ with $m$ a positive integer, $\wh{h}_N(\bs{v})\ge 0$ and $h(\bs{v})\ge 0$.
\end{corollary}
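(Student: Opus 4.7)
The plan is to read off explicit formulas for $h(\bs{v})$ and $\wh{h}_N(\bs{v})$ from \Cref{delI_F,hhat}, and then observe that both inequalities reduce to the same one-line telescoping argument. Both expressions have the form ``sum of per-bin maxima, minus a single maximum,'' so the task in each case is to show that the summed maxima dominate $\max_j \pi_j$ (respectively $\max_j \wh{\pi}_{j,N}$).

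For $h(\bs{v})\ge 0$, I would fix an arbitrary $j_0\in\{1,2\}$ and bound each term from below:
\begin{equation*}
\max_j\bigl\{\pi_j[F_j(v_k)-F_j(v_{k-1})]\bigr\}\ \ge\ \pi_{j_0}[F_{j_0}(v_k)-F_{j_0}(v_{k-1})].
\end{equation*}
Summing over $k=1,\ldots,m+1$ the right side telescopes to $\pi_{j_0}[F_{j_0}(\infty)-F_{j_0}(-\infty)]=\pi_{j_0}$. Since $j_0$ was arbitrary, taking the max over $j_0$ yields $\sum_k \max_j\{\pi_j[F_j(v_k)-F_j(v_{k-1})]\}\ge \max_j\pi_j$, which is exactly $h(\bs{v})\ge 0$ via \Cref{delI_F}. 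For $\wh{h}_N(\bs{v})\ge 0$, I would run the identical argument with $(\pi_j,F_j)$ replaced by $(\wh{\pi}_{j,N},\wh{F}_{j,N})$, invoking \Cref{hhat} instead; the telescoping is still valid because \Cref{def:A1} fixes $\wh{F}_{j,N}(-\infty)=0$ and $\wh{F}_{j,N}(\infty)=1$ regardless of whether $N_Y(j)=0$.

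There is no real obstacle here; the corollary is a direct bookkeeping consequence of the two propositions that precede it. The only point worth checking carefully is the boundary convention at $\pm\infty$, since otherwise the telescoping would fail in the degenerate case $N_Y(j)=0$, but this is handled by the definitions already in place.
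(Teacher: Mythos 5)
Your proof is correct and essentially identical to the paper's: the paper simply picks the index $p$ maximizing $\wh{\pi}_{j,N}$ (resp. $\pi_j$) at the outset and telescopes, whereas you telescope for an arbitrary $j_0$ and then take the maximum, which is the same one-line argument. Your remark about the boundary conventions at $\pm\infty$ is a valid but minor observation already built into \Cref{delI_F,hhat}.
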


\begin{proof}
Let $\wh{\pi}_{p,N} = \max\,\{\wh{\pi}_{1,N}, \wh{\pi}_{2,N}\}$.
By \Cref{hhat}, we have
\begin{align*}
\wh{h}_N(\bs{v}) &= \sum_{k=1}^{m+1}\max_j\left\{\wh{\pi}_{j,N}\left[\wh{F}_{j,N}(v_k)-\wh{F}_{j,N}(v_{k-1})\right] \right\} - \max_j \left\{\wh{\pi}_{j,N}\right\}\\
&\ge \sum_{k=1}^{m+1}\wh{\pi}_{p,N} \left[\wh{F}_{p,N}(v_k)-\wh{F}_{p,N}(v_{k-1}) \right] - \wh{\pi}_{p,N} = 0.
\end{align*}
We can similarly prove that $h(\bs{v})\ge 0$ from \Cref{delI_F}.
\end{proof}

For simplicity, we may write $\varphi_j(v,v')$ and $\wh{\varphi}_{j,N}(v,v')$ in place of $\pi_j[F_j(v)-F_j(v')]$ and $\wh{\pi}_{j,N}[\wh{F}_{j,N}(v)-\wh{F}_{j,N}(v')]$, respectively, so that
\begin{align}
&h(\bs{v}) = \sum_{k=1}^{m+1}\max_j\left\{\varphi_j(v_k, v_{k-1})\right\} - \max_j \left\{\pi_j\right\},\label{hveq}\\
&\wh{h}_N(\bs{v}) = \sum_{k=1}^{m+1}\max_j\left\{\wh{\varphi}_{j,N}(v_k, v_{k-1}) \right\} - \max_j \left\{\wh{\pi}_{j,N}\right\}\label{hhveq}
\end{align}
by \Cref{delI_F,hhat}.

\begin{definition}\label{Dm}
\upshape
For $m=1,\ldots,n$, define
\begin{align*}
&\V_m=\argmax_{\bs{v}\in\R_\le^m}\left\{h({\bs{v}})\right\},\\
&\wh{\V}_{m,N}=\argmax_{\bs{v}\in\wh{\R}_N^m}\left\{\wh{h}_N({\bs{v}})\right\},\\
&\C_m=\left\{(c_{i_1},\ldots,c_{i_m})\mid 1\le i_1< \cdots< i_m\le n\right\}.
\end{align*}
\end{definition}

\begin{remark}\label{VmCm}
We will see that $\V_m\ne\emptyset$ ($m\le n$) by \Cref{hwhv2,delISc}.
Since $\wh{\R}_N^m$ is a nonempty finite set (see \Cref{hR}), $\wh{\V}_{m,N}\ne\emptyset$.
\end{remark}

\begin{proposition}\label{hwhv}
Let $m$ be a positive integer with $m<n$.
Then for any $\bs{v}=(v_1,\ldots,v_m)\in\R_\le^m$, there exists $\bs{w}=(c_{i_1},\ldots,c_{i_m})$ with $1\le i_1\le\cdots\le i_m\le n$ such that $h(\bs{w})\ge h(\bs{v})$.
\end{proposition}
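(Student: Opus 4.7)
My plan is to show that any coordinate of $\bs{v}$ that is not a crossover point can be relocated, without decreasing $h$, to either a crossover point or to the value of an adjacent coordinate, and then to iterate. Using Proposition~\ref{delI_F} I write
\begin{equation*}
h(\bs{v}) = \sum_{k=1}^{m+1} \max_j \varphi_j(v_k, v_{k-1}) - \max_j \pi_j,
\end{equation*}
fix every coordinate except one, say $v_k$, and observe that only the two summands indexed by $k$ and $k+1$ depend on $v_k$. Setting $A_j := \varphi_j(v_k, v_{k-1})$ and $B_j := \varphi_j(v_{k+1}, v_k)$, the identity $\max(a,b) = \tfrac12(a+b+|a-b|)$ gives
\begin{equation*}
\max_j A_j + \max_j B_j = \tfrac12\bigl[(A_1+A_2+B_1+B_2) + |s(v_k)| + |t(v_k)|\bigr],
\end{equation*}
where $s := A_1 - A_2$ and $t := B_1 - B_2$. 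Both $A_1+A_2+B_1+B_2$ and $s+t$ are independent of $v_k$, so the dependence of $h(\bs{v})$ on $v_k$ collapses to $\tfrac12\bigl[\,|s(v_k)|+|C-s(v_k)|\,\bigr]$ for a constant $C$, which is a convex function of the scalar variable $s$.

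The structural input I would exploit is that $s'(v_k)=\pi_1 f_1(v_k)-\pi_2 f_2(v_k)$ has constant sign on each interval $(c_p,c_{p+1})$ (with $c_0:=-\infty$, $c_{n+1}:=\infty$): by definition of a crossover point and the finiteness of $C'(\pi_1f_1,\pi_2f_2)$ no sign change can occur strictly between $c_p$ and $c_{p+1}$, so $s$ is monotone on every such interval. Since a convex scalar function composed with a monotone map attains its maximum on any closed sub-interval at an endpoint, $h(\bs{v})$ viewed as a function of $v_k$ on the admissible range $[v_{k-1},v_{k+1}]$ attains its maximum at one of the points
\begin{equation*}
\{v_{k-1},\,v_{k+1}\}\cup\bigl\{c_p : v_{k-1}\le c_p\le v_{k+1}\bigr\}.
\end{equation*}

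This yields the reduction procedure: if some $v_k$ is not a crossover point I replace it by an optimizer from this finite set, after which either (a) $v_k$ itself becomes a crossover point, or (b) $v_k$ coincides with an adjacent entry, in which case the corresponding sub-interval in $S_{\bs{v}}$ collapses and the number of distinct non-crossover values in $\bs{v}$ drops by one. In either outcome the count of distinct non-crossover entries strictly decreases, so after finitely many iterations I reach a tuple $\bs{w}\in\R_\le^m$ of length $m$ all of whose entries are crossover points, i.e.\ of the required form $(c_{i_1},\ldots,c_{i_m})$ with $1\le i_1\le\cdots\le i_m\le n$, and the construction ensures $h(\bs{w})\ge h(\bs{v})$. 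The point I expect to require the most care is the boundary bookkeeping when $v_{k-1}=-\infty$ or $v_{k+1}=\infty$ (and one of $c_p,c_{p+1}$ is correspondingly infinite): there I would verify directly that the limiting value of the $v_k$-dependent part of $h$ as $v_k\to\pm\infty$ equals its value at the finite neighbor, so that the candidate set above still contains a maximizer and no optimizer escapes to infinity.
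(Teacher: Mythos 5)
Your analytic core is sound and is a genuinely different (and in places slicker) route than the paper's: the paper proves this by an explicit case analysis (its cases (I), (II-a), (II-b)) showing that a non-crossover coordinate $v_p$ can always be moved to a neighbouring crossover point $c_q$ or $c_{q-1}$ without decreasing $h$, so that the number of non-crossover coordinates strictly drops at every step; you instead isolate the two summands depending on $v_k$, rewrite their contribution as a constant plus $\tfrac12\bigl[\,|s(v_k)|+|C-s(v_k)|\,\bigr]$, and use convexity in $s$ together with the monotonicity of $s$ between consecutive crossover points to locate constrained maximizers in the finite candidate set $\{v_{k-1},v_{k+1}\}\cup\{c_p\in[v_{k-1},v_{k+1}]\}$. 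That step, and your boundary remark for $m\ge 2$, are correct (only the case $m=1$, where both neighbours are infinite, needs the separate observation that the limit of $h$ at $\pm\infty$ is $0\le h(c_p)$).

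The genuine gap is in the termination of your reduction. Your claimed invariant, that ``the count of distinct non-crossover entries strictly decreases'' in either outcome, is false: in outcome (b) the coordinate is set equal to an adjacent entry, and once such a duplicate exists (or if $\bs{v}$ had repeated entries to begin with) a later step can return the current position as its own optimizer, so nothing changes and no count drops. Concretely, after $(a,b)\mapsto(b,b)$ with $b$ not a crossover point, the admissible range for $v_1$ is $(-\infty,b]$ and for $v_2$ is $[b,\infty)$, and it can happen that $b$ is the (constrained) optimizer for each coordinate separately; your local, one-coordinate-at-a-time search then stalls at a tuple none of whose entries are crossover points, and the desired $\bs{w}$ has not been produced. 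This is exactly the difficulty the paper's case analysis is engineered to avoid: there the replacement point is always a crossover point, never merely an adjacent coordinate. Your argument can be repaired, but it needs an extra idea you did not supply, e.g.\ strong induction on $m$: when two coordinates coincide the corresponding cell of $S_{\bs v}$ is empty and contributes $0$, so $h$ of the $m$-tuple equals $h$ of the $(m-1)$-tuple obtained by deleting one duplicate; apply the induction hypothesis there and then pad the resulting crossover tuple with a repeated index, which is allowed because the proposition only requires $i_1\le\cdots\le i_m$. Without this (or an equivalent global argument), the proof as written is incomplete.
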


\begin{proof}
Let $\bs{v}=(v_1,\ldots,v_m)\in\R_\le^m$ be given.
Set $v_0=-\infty$, $v_{m+1}=\infty$, and $r(\bs{v})=\#\{k\in\{1,\ldots,m\}\mid v_k\notin C(\pi_1f_1, \pi_2f_2)\}$.
The statement obviously holds when $r(\bs{v})=0$.

Let $r(\bs{v})>0$.
Then we can choose $v_p\notin C(\pi_1f_1, \pi_2f_2)$ ($1\le p\le m$) and $c_q\in C(\pi_1f_1, \pi_2f_2)$ ($1\le q\le n$) satisfying $c_{q-1}<v_p<c_q\le v_{p+1}$ or $v_{p-1}\le c_q<v_p<c_{q+1}$.
We will only show the case $c_{q-1}<v_p<c_q\le v_{p+1}$, as the other is similar.
Without loss of generality, we may assume that $\pi_1f_1\ge \pi_2f_2$ on $(v_p, c_q)$, so that $\varphi_1(c_q, v_p)> \varphi_2(c_q, v_p)$ and $\varphi_1(v_p, c_{q-1})> \varphi_2(v_p, c_{q-1})$, since $C'(\pi_1f_1, \pi_2f_2)$ is finite.
In the following, we consider the cases (I) $\varphi_1(v_p, v_{p-1})\ge \varphi_2(v_p, v_{p-1})$ and (II) $\varphi_1(v_p, v_{p-1})< \varphi_2(v_p, v_{p-1})$.

(I) Suppose $\varphi_1(v_p, v_{p-1})\ge \varphi_2(v_p, v_{p-1})$. Then 
\begin{align*}
&\varphi_1(c_q, v_{p-1}) > \varphi_2(c_q, v_{p-1}), \\
&\varphi_j(v_{p+1}, c_q) = \varphi_j(v_{p+1}, v_p) - \varphi_j(c_q, v_p) \qquad (j=1,2), \\
&\varphi_j(c_q, v_{p-1}) = \varphi_j(v_p, v_{p-1}) + \varphi_j(c_q, v_p) \qquad (j=1,2),
\end{align*}
hence
\begin{align*}
&\max_j\left\{\varphi_j(c_q, v_{p-1})\right\} + \max_j\left\{\varphi_j(v_{p+1}, c_q)\right\} \\
&= \varphi_1(c_q, v_{p-1}) + \max_j \left\{\varphi_j(v_{p+1}, v_p) - \varphi_j(c_q, v_p) \right\} \\
&= \varphi_1(v_p, v_{p-1}) + \varphi_1(c_q, v_p) + \max_j \left\{\varphi_j(v_{p+1}, v_p) - \varphi_j(c_q, v_p) \right\} \\
&\ge \varphi_1(v_p, v_{p-1}) + \varphi_1(c_q, v_p) + \max_j \left\{\varphi_j(v_{p+1}, v_p)\right\} - \varphi_1(c_q, v_p) \\
&= \varphi_1(v_p, v_{p-1}) + \max_j \left\{\varphi_j(v_{p+1}, v_p)\right\} \\
&= \max_j \left\{\varphi_j(v_p, v_{p-1})\right\} + \max_j \left\{\varphi_j(v_{p+1}, v_p)\right\},
\end{align*}
and setting $\bs{v}'=(v_1,\ldots,v_{p-1},c_q,v_{p+1},\ldots,v_m)\in\R_\le^m$ gives $r(\bs{v}')<r(\bs{v})$ and $h(\bs{v}')\ge h(\bs{v})$.

(II) Suppose $\varphi_1(v_p, v_{p-1})< \varphi_2(v_p, v_{p-1})$.
Since $\pi_1f_1\ge \pi_2f_2$ on $(v_p, c_q)$, we can see that $v_{p-1}<c_{q-1}<v_p$ and $\varphi_1(c_{q-1}, v_{p-1}) < \varphi_2(c_{q-1}, v_{p-1})$.
First consider the case (II-a) $\varphi_1(v_{p+1}, v_p)\ge \varphi_2(v_{p+1}, v_p)$. 
Then $\varphi_1(v_{p+1}, c_{q-1}) > \varphi_2(v_{p+1}, c_{q-1})$, hence
\begin{align*}
&\max_j\left\{\varphi_j(c_{q-1}, v_{p-1})\right\} + \max_j\left\{\varphi_j(v_{p+1}, c_{q-1})\right\} \\
&= \varphi_2(c_{q-1}, v_{p-1}) + \varphi_1(v_{p+1}, c_{q-1})\\
&= \varphi_2(c_{q-1}, v_{p-1}) + \varphi_1(v_{p+1}, v_p) + \varphi_1(v_p, c_{q-1})\\
&> \varphi_2(c_{q-1}, v_{p-1}) + \varphi_1(v_{p+1}, v_p) + \varphi_2(v_p, c_{q-1})\\
&= \varphi_2(v_p, v_{p-1}) + \varphi_1(v_{p+1}, v_p)\\
&= \max_j \left\{\varphi_j(v_p, v_{p-1})\right\} + \max_j \left\{\varphi_j(v_{p+1}, v_p)\right\},
\end{align*}
and setting $\bs{v}'=(v_1,\ldots,v_{p-1},c_{q-1},v_{p+1},\ldots,v_m)\in\R_\le^m$ gives $r(\bs{v}')<r(\bs{v})$ and $h(\bs{v}')> h(\bs{v})$.
Next consider the case (II-b) $\varphi_1(v_{p+1}, v_p)< \varphi_2(v_{p+1}, v_p)$.
If there exists $x\in (c_{q-1}, v_p)$ such that $\varphi_1(v_{p+1}, x) \ge \varphi_2(v_{p+1}, x)$, then $\varphi_1(x, v_{p-1})<\varphi_2(x, v_{p-1})$, hence the case (II-a) applies to $\bs{v}'' = (v_1,\ldots,v_{p-1},x,v_{p+1},\ldots,v_m)\in\R_\le^m$, where $r(\bs{v}'') = r(\bs{v})$ and
\begin{align*}
&h(\bs{v}'') - h(\bs{v}) \\
&= \max_j\,\{\varphi_j(x, v_{p-1})\} + \max_j\,\{\varphi_j(v_{p+1}, x)\} - \max_j\,\{\varphi_j(v_p, v_{p-1})\} - \max_j\,\{\varphi_j(v_{p+1}, v_p)\} \\
&= \varphi_2(x, v_{p-1}) + \varphi_1(v_{p+1}, x) - \varphi_2(v_p, v_{p-1}) - \varphi_2(v_{p+1}, v_p) \\
&\ge \varphi_2(x, v_{p-1}) + \varphi_2(v_{p+1}, x) - \varphi_2(v_p, v_{p-1}) - \varphi_2(v_{p+1}, v_p) \\
&= \varphi_2(v_{p+1}, v_{p-1}) - \varphi_2(v_{p+1}, v_{p-1}) = 0.
\end{align*}
If $\varphi_1(v_{p+1}, x) < \varphi_2(v_{p+1}, x)$ for any $x\in (c_{q-1}, v_p)$, then $\varphi_1(v_{p+1}, c_{q-1})\le\varphi_2(v_{p+1}, c_{q-1})$, and setting $\bs{v}'=(v_1,\ldots,v_{p-1},c_{q-1},v_{p+1},\ldots,v_m)\in\R_\le^m$ gives $r(\bs{v}')<r(\bs{v})$ and 
\begin{align*}
&h(\bs{v}') - h(\bs{v}) \\
&= \max_j\,\{\varphi_j(c_{q-1}, v_{p-1})\} + \max_j\,\{\varphi_j(v_{p+1}, c_{q-1})\} - \max_j\,\{\varphi_j(v_p, v_{p-1})\} - \max_j\,\{\varphi_j(v_{p+1}, v_p)\} \\
&= \varphi_2(c_{q-1}, v_{p-1}) + \varphi_2(v_{p+1}, c_{q-1}) - \varphi_2(v_p, v_{p-1}) - \varphi_2(v_{p+1}, v_p) \\
&= \varphi_2(v_{p+1}, v_{p-1}) - \varphi_2(v_{p+1}, v_{p-1}) = 0.
\end{align*}

Taken together, for any $\bs{v}\in\R_\le^m$ with $r(\bs{v})>0$, there exists $\bs{v}'\in\R_\le^m$ such that $r(\bs{v}') < r(\bs{v})$ and $h(\bs{v}')\ge h(\bs{v})$.
The statement follows by induction.
\end{proof}

\begin{corollary}\label{hwhv2}
If $m$ is a positive integer with $m<n$, then there exists $\bs{c}'\in\C_m$ such that $h(\bs{c}')=\sup\,\{h(\bs{v})\mid \bs{v}\in\R_\le^m\}$.
Furthermore, $h(\bs{c}')<h(\bs{c})$.
\end{corollary}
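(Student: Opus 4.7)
The plan is to reduce the sup over $\R_\le^m$ to a maximum over the finite set $\C_m$, and then invoke \Cref{delISc2} for the strict inequality.

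To start, I would observe that $\C_m$ is a nonempty finite subset of $\R_\le^m$, so $\max_{\bs{c}'\in\C_m} h(\bs{c}')$ is attained at some $\bs{c}^*\in\C_m$ and is automatically at most $\sup_{\bs{v}\in\R_\le^m} h(\bs{v})$. It therefore suffices to show $h(\bs{c}^*)\ge h(\bs{v})$ for every $\bs{v}\in\R_\le^m$. Given such a $\bs{v}$, \Cref{hwhv} supplies $\bs{w} = (c_{i_1},\ldots,c_{i_m})$ with $1\le i_1\le\cdots\le i_m\le n$ and $h(\bs{w})\ge h(\bs{v})$; if the $i_k$ happen to be all distinct, then $\bs{w}\in\C_m$ already and the comparison $h(\bs{c}^*)\ge h(\bs{w})\ge h(\bs{v})$ is immediate.

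The hard part is handling repeated indices in $\bs{w}$, since membership in $\C_m$ requires strict inequalities. I plan to do this in two moves. First, whenever $w_{k-1}=w_k$, the cell $S_{\bs{w},k}$ is empty and contributes $0$ to \eqref{hveq}, so collapsing repeats yields a strictly increasing sub-tuple $\bs{w}'$ of length $m'\le m$ with $h(\bs{w})=h(\bs{w}')$. Second, I would successively insert crossover points from $\{c_1,\ldots,c_n\}$ not already appearing in $\bs{w}'$; such points exist because $m'\le m<n$. Each single insertion does not decrease $h$, because splitting a cell $(u',u'']$ at an interior point $u$ replaces the summand $\max_j\{\varphi_j(u'',u')\}$ by $\max_j\{\varphi_j(u,u')\}+\max_j\{\varphi_j(u'',u)\}$, which is at least the original by the sub-additivity of $\max$ combined with the additivity of $\varphi_j$ in its first argument (and an analogous estimate handles insertions before $w_1'$ or after $w_{m'}'$). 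After $m-m'$ such insertions I obtain some $\bs{c}'\in\C_m$ with $h(\bs{c}')\ge h(\bs{w}')=h(\bs{w})\ge h(\bs{v})$, hence $h(\bs{c}^*)\ge h(\bs{v})$ as required.

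Finally, the strict inequality $h(\bs{c}^*)<h(\bs{c})$ is immediate from \Cref{delISc2} applied to $\bs{c}^*\in\R_\le^m$ with $m<n$. The main obstacle I anticipate is the filling-in step just described; everything else is bookkeeping around \Cref{hwhv,delISc2} and the finiteness of $\C_m$.
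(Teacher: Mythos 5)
Your proof is correct and follows essentially the same route as the paper: reduce to tuples of crossover points via \Cref{hwhv}, pass to an element of $\C_m$ by completing the (possibly repeated) entries to $m$ distinct crossover points using that refining a split cannot decrease $h$, and invoke \Cref{delISc2} for the strict inequality. Your insertion argument via additivity of $\varphi_j$ and subadditivity of $\max$ simply spells out the refinement step that the paper's proof compresses into ``$h(\bs{c}')\ge h(\bs{w}')$ by definition.''
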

\begin{proof}
Since there are only finitely many choices for $\bs{w}\in\R_\le^m$ in \Cref{hwhv}, we can choose $\bs{w}'=(c_{i_1},\ldots,c_{i_m})\in\argmax_{\bs{w}}h(\bs{w})$, where $\bs{w}$ ranges over the choices.
Then $h(\bs{w}')\ge h(\bs{v})$ for all $\bs{v}\in\R_\le^m$.
Let $A=\{c_{i_1},\ldots,c_{i_m}\}$ and assume that $\bs{w}'\notin\C_m$.
Then $\#A<m$, and there exists $A'=\{c_{j_1},\ldots,c_{j_m}\}$ such that $A\subset A'$ and $1\le j_1<\cdots<j_m\le n$.
Put $\bs{c}'=(c_{j_1},\ldots,c_{j_m})$.
Then $\bs{c}'\in\C_m$, and we can see that $h(\bs{c}')\ge h(\bs{w}')$ by definition.
Furthermore, $h(\bs{c}')<h(\bs{c})$ by \Cref{delISc2}.
\end{proof}


%

\begin{remark}
Note that $\bs{v}\in\V_m$ does not necessarily imply $\bs{v}\in\C_m$.
Here we give an example for the case where $(n,m)=(2,1)$ and $\V_1\not\subset\C_1$.
Assume that $\pi_1=0.9$, $\pi_2=0.1$, $f_1=\nu_{0,1}$, and $f_2=\tau_{-0.1,0.1}$ (see \Cref{gausspdf,trianglepdf} for the definitions of $\nu$ and $\tau$).
Then $\pi_1f_1(0)<\pi_2f_2(0)$, $n=2$, and $\C_2 = \{c_1, c_2\}$ where $-0.1<c_1<0<c_2<0.1$.
Since $\varphi_1(\infty, 0.1)=\varphi_1(-0.1, -\infty)=\pi_1\Phi(-0.1)\simeq 0.4142>\pi_2$ (see \Cref{Phidef} for the definition of $\Phi$), $\varphi_1(v, -\infty)>\varphi_2(v, -\infty)$ and $\varphi_1(\infty, v)>\varphi_2(\infty, v)$ hold for all $v\in\R$.
Hence $h(v)=\pi_1$ for all $v\in\R$, and therefore $\V_1=\R\not\subset\{c_1,c_2\}=\C_1$.
\end{remark}

For a real random variable $\xi$ on $(\Omega,\mathcal{F},\mathbb{P})$, we denote its expectation and variance by
\begin{equation*}
\E\,[\xi] = \int_\Omega \xi\:d\mathbb{P},\qquad
\Var\,[\xi] = \int_\Omega\left(\xi-\E\,[\xi]\right)^2\,d\mathbb{P},
\end{equation*}
respectively.
We also denote by $\1_A$ the indicator function of a set $A$, i.e.,
\begin{equation*}
\1_A(t) = \begin{cases}
\ 1 & \mbox{if}\quad t\in A,\\
\ 0 & \mbox{if}\quad t\notin A.
\end{cases}
\end{equation*}

\begin{theorem}\label{slln}
{\normalfont (Kolmogorov's strong law of large numbers. See \cite{hsu47} for the proof.)}
Let $\{\xi_i\}$ be a sequence of i.i.d.\ real random variables on $(\Omega,\mathcal{F},\mathbb{P})$ with $\E\,[|\xi_1|]<\infty$ and $\Var\,[\xi_1]<\infty$.
Let $\mu=\E\,[\xi_1]$ and $s_k=\xi_1+\cdots+\xi_k$ (k=1,2,\ldots).
Then $s_k/k$ converges completely to $\mu$.
\end{theorem}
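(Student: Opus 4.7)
The plan is to establish $\sum_{k=1}^\infty \mathbb{P}\bigl(|s_k - k\mu| > k\epsilon\bigr) < \infty$ for every $\epsilon>0$, which combined with \Cref{rem:compas} yields complete convergence of $s_k/k$ to $\mu$. By replacing $\xi_i$ with $\xi_i - \mu$ (which preserves the i.i.d.\ structure and finiteness of variance), I would reduce to the centered case $\mu=0$, so the goal becomes $\sum_k \mathbb{P}(|s_k|>k\epsilon)<\infty$.

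The direct Chebyshev bound $\mathbb{P}(|s_k|>k\epsilon)\le \sigma^2/(k\epsilon^2)$ is only of harmonic order and hence not summable, so a sharper argument is required. The standard device is truncation: fix a threshold $b_k$ and split $\xi_i = Y_i + R_i$ with $Y_i=\xi_i\,\1_{|\xi_i|\le b_k}$ and $R_i=\xi_i\,\1_{|\xi_i|>b_k}$. A union bound gives $\mathbb{P}(\exists\,i\le k:R_i\ne 0)\le k\,\mathbb{P}(|\xi_1|>b_k)$, and the Fubini rearrangement
\begin{equation*}
\sum_{k=1}^\infty k\,\mathbb{P}(|\xi_1|>b_k)=\E\biggl[\sum_{k:\,b_k<|\xi_1|} k\biggr]
\end{equation*}
is bounded by a constant multiple of $\E[\xi_1^2]$ whenever $b_k\asymp k$, so the tail contribution is summable.

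On the complementary event $s_k$ equals the truncated sum $T_k=\sum_i Y_i$. Setting $\alpha_k=\E[Y_1]=-\E[\xi_1\,\1_{|\xi_1|>b_k}]$, the elementary inequality $|\xi_1|\,\1_{|\xi_1|>b_k}\le \xi_1^2\,\1_{|\xi_1|>b_k}/b_k$ gives $k|\alpha_k|\le (k/b_k)\,\E[\xi_1^2\,\1_{|\xi_1|>b_k}]\to 0$ for $b_k\asymp k$, so for all sufficiently large $k$ the task reduces to bounding $\mathbb{P}(|T_k - k\alpha_k|>k\epsilon/2)$. The summands $Y_i-\alpha_k$ are independent, centered, bounded by $2b_k$, and of variance at most $\sigma^2$; a Rosenthal-type moment inequality then yields $\E[(T_k-k\alpha_k)^{2m}]\le C_m\bigl(k\,b_k^{2m-2}\sigma^2 + k^m\sigma^{2m}\bigr)$ for each integer $m\ge 2$, and Markov's inequality turns this into a polynomial decay estimate for the probability.

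The hard part will be reconciling the two scales. The tail calculation essentially forces $b_k\asymp k$ under the sole hypothesis $\E[\xi_1^2]<\infty$, yet a straightforward fourth-moment bound with $b_k\asymp k$ produces only $O(1/k)$ for the centered truncated sum — the same borderline non-summable order as the Chebyshev bound we started from. Surmounting this is the substance of Hsu-Robbins~\cite{hsu47}, and is accomplished via an iterated (or multi-scale) truncation combined with a higher-moment inequality calibrated so that the resulting probability decays faster than $1/k$. Once both pieces are made summable, adding them gives $\sum_k\mathbb{P}(|s_k|>k\epsilon)<\infty$, and \Cref{rem:compas} then completes the proof.
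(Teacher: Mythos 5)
You have identified the right target but not hit it. Note first that the paper itself gives no proof of this statement: it is the Hsu--Robbins complete convergence theorem, quoted verbatim with a pointer to \cite{hsu47}. So your sketch has to stand on its own, and it does not. The reduction to $\mu=0$, the Fubini estimate $\sum_k k\,\mathbb{P}(|\xi_1|>b_k)\lesssim \E[\xi_1^2]$ for $b_k\asymp k$, and the centering of the truncated sum are all sound, but the decisive estimate --- summability of $\mathbb{P}(|T_k-k\alpha_k|>k\epsilon/2)$ --- is exactly the step you leave open. By your own computation the Rosenthal-plus-Markov route with the pointwise bound $\E[(Y_1-\alpha_k)^{2m}]\le C b_k^{2m-2}\sigma^2$ yields $C_m(k^{-1}+k^{-m})$, whose leading $k^{-1}$ term does not improve with $m$; and declaring that this obstacle ``is the substance of Hsu--Robbins,'' to be overcome by an unspecified iterated truncation, amounts to citing the theorem you were asked to prove. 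As written, this is a genuine gap, not a stylistic omission.

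The gap is closable without any multi-scale machinery; the loss occurs precisely where you bound the truncated higher moment pointwise by $b_k^{2m-2}\sigma^2$ instead of keeping it inside an expectation and summing over $k$. Take $b_k=\epsilon k/2$ and $m=2$. For independent centered summands $Y_i-\alpha_k$ the fourth central moment of $T_k-k\alpha_k$ is at most $C\bigl(k\,\E[\xi_1^4\1_{\{|\xi_1|\le b_k\}}]+k^2\sigma^4+k\bigr)$ (the last term absorbing the negligible centering contribution), so Markov gives $\mathbb{P}(|T_k-k\alpha_k|>k\epsilon/2)\le C_\epsilon\bigl(k^{-3}\E[\xi_1^4\1_{\{|\xi_1|\le \epsilon k/2\}}]+k^{-2}\bigr)$. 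Now apply the same Fubini device you already used for the tails: $\sum_k k^{-3}\E[\xi_1^4\1_{\{|\xi_1|\le \epsilon k/2\}}]=\E\bigl[\xi_1^4\sum_{k\ge 2|\xi_1|/\epsilon}k^{-3}\bigr]\le C\,\E\bigl[\xi_1^4\min\{1,(\epsilon/2|\xi_1|)^{2}\}\bigr]\le C'\epsilon^2\,\E[\xi_1^2]<\infty$. Combining this with your union bound $k\,\mathbb{P}(|\xi_1|>\epsilon k/2)$ for the event that some summand escapes the truncation (summable by the same Fubini argument) gives $\sum_k\mathbb{P}(|s_k|>k\epsilon)<\infty$, which is the required complete convergence. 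Alternatively you may simply do what the paper does and invoke \cite{hsu47}; but if you present a proof, this Fubini-on-the-truncated-fourth-moment step is the missing ingredient, and no ``iterated truncation'' is needed.
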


\begin{theorem}\label{Fhconv}
{\normalfont (The Glivenko-Cantelli theorem. See \cite[Theorem A, Section 2.1.4]{serfling80} for the proof.)}
For each $j\in\{1,2\}$, $\sup_{x\in\R}|\wh{F}_{j,N}(x)-F_j(x)|$
converges completely to $0$ as $N\to\infty$.
\end{theorem}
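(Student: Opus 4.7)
The plan is to reduce the claim to the classical concentration of the empirical CDF by conditioning on the class counts. Since the pairs $(X_i, Y_i)$ are i.i.d.\ with $\mathbb{P}(Y_i = j) = \pi_j > 0$, the count $N_Y(j)$ has a $\mathrm{Binomial}(N, \pi_j)$ distribution, and conditional on $N_Y(j) = k \ge 1$ the sub-sample $\{X_i : Y_i = j\}$ consists of $k$ i.i.d.\ draws from $F_j$, of which $\wh{F}_{j,N}$ is the empirical CDF. This reduction is the key structural observation; once it is in place the problem becomes a statement about the classical empirical CDF with a random but nearly deterministic sample size.

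First I would quarantine the rare event $\{N_Y(j) < N\pi_j/2\}$. Hoeffding's inequality applied to the Bernoulli indicators $\1_{\{Y_i = j\}}$ yields an exponential bound of the form $\mathbb{P}(N_Y(j) < N\pi_j/2) \le e^{-N\pi_j^2/2}$; this in particular absorbs the degenerate case $N_Y(j) = 0$ in which $\wh{F}_{j,N}$ is artificially set to $0$, and it is already summable in $N$.

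On the complementary event I would invoke the Dvoretzky--Kiefer--Wolfowitz inequality for the conditional empirical CDF: for each integer $k \ge 1$,
\begin{equation*}
\mathbb{P}\left(\sup_{x\in\R}\bigl|\wh{F}_{j,N}(x) - F_j(x)\bigr| > \epsilon \;\Big|\; N_Y(j) = k\right) \le 2e^{-2k\epsilon^2}.
\end{equation*}
Averaging over $k \ge N\pi_j/2$ and combining with the Hoeffding tail above gives an overall bound of the form $e^{-N\pi_j^2/2} + 2e^{-N\pi_j \epsilon^2}$, which is summable in $N$; complete convergence in the supremum norm then follows from the criterion recorded in \Cref{ascmp} and \Cref{rem:compas}.

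The main obstacle, and really the only delicate step, is the conditional-distribution argument: verifying that given $\{i : Y_i = j\} = I$ the variables $(X_i)_{i \in I}$ are i.i.d.\ with CDF $F_j$. This can be established by summing over the $\binom{N}{k}$ choices of index set $I$ and using the factorisation of the joint law of $(X, Y)$, or alternatively by an exchangeability argument. Once this reduction is performed, DKW and Hoeffding close the argument mechanically, and one avoids the partition/bracketing construction that appears in the classical Glivenko--Cantelli proof for almost sure convergence, where the weaker exponential rate would not suffice for complete convergence.
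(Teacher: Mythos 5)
Your proof is correct, but it follows a genuinely different route from the paper: the paper offers no argument of its own here, disposing of the statement by citing Serfling's treatment of the Glivenko--Cantelli theorem (with its exponential, hence complete, convergence rate) for an empirical CDF built from a \emph{fixed} number of i.i.d.\ observations. Strictly speaking, $\wh{F}_{j,N}$ is the empirical CDF of the class-$j$ subsample, whose size $N_Y(j)$ is a $\mathrm{Binomial}(N,\pi_j)$ random variable and which may even be empty (in which case the paper's convention sets $\wh{F}_{j,N}\equiv 0$), so the citation implicitly relies on exactly the reduction you carry out: conditionally on the labels, the class-$j$ observations are i.i.d.\ with CDF $F_j$, the event $\{N_Y(j)<N\pi_j/2\}$ (which absorbs the degenerate case) has probability at most $e^{-N\pi_j^2/2}$ by Hoeffding, and on its complement the conditional Dvoretzky--Kiefer--Wolfowitz bound gives $2e^{-2k\epsilon^2}\le 2e^{-N\pi_j\epsilon^2}$, yielding a summable overall bound and hence complete convergence in the sense of \Cref{ascmp}. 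What your argument buys is an explicit quantitative tail bound and an explicit handling of the random subsample size and of the $N_Y(j)=0$ convention, all of which the paper leaves implicit; what the citation buys is brevity. Note only that your bounds use the standing assumption $\pi_j>0$ ($j=1,2$) made in \Cref{sec:anal} and carried into \Cref{sec:numer}, which is indeed needed for the exponents $e^{-N\pi_j^2/2}$ and $e^{-N\pi_j\epsilon^2}$ to decay, and that the conditional-i.i.d.\ step you flag as delicate is indeed valid (conditionally on the label vector, the $X_i$ are independent with $X_i\sim F_{Y_i}$, and averaging over label vectors with $N_Y(j)=k$ gives $k$ i.i.d.\ draws from $F_j$).
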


\begin{proposition}\label{pihconv}
For each $j\in\{1,2\}$, $\wh{\pi}_{j,N}$ converges completely to $\pi_j$ as $N\to\infty$.
\end{proposition}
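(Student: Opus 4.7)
The plan is to recognize $\wh{\pi}_{j,N}$ as a sample mean of i.i.d.\ bounded random variables and then invoke Kolmogorov's strong law of large numbers (\Cref{slln}) directly.

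First I would write
\begin{equation*}
\wh{\pi}_{j,N} = \frac{N_Y(j)}{N} = \frac{1}{N}\sum_{i=1}^N \1_{\{j\}}(Y_i),
\end{equation*}
and set $\xi_i = \1_{\{j\}}(Y_i)$ for each $i$. Since $(X_i, Y_i)$ are i.i.d.\ with distribution $P$, the projections $Y_i$ are also i.i.d., and hence so are the indicators $\xi_i$, each being a Bernoulli random variable with success probability $\mathbb{P}(Y_1 = j) = P(\R\times\{j\}) = \pi_j$.

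Next I would verify the integrability hypotheses of \Cref{slln}: we have $\E[|\xi_1|] = \E[\xi_1] = \pi_j \le 1 < \infty$ and $\Var[\xi_1] = \pi_j(1-\pi_j) \le 1/4 < \infty$. Applying \Cref{slln} with $\mu = \pi_j$ and $s_N = \xi_1 + \cdots + \xi_N = N_Y(j)$ yields that $s_N/N = \wh{\pi}_{j,N}$ converges completely to $\pi_j$, which is the claim.

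There is no real obstacle here; the statement is essentially a reformulation of the strong law for Bernoulli trials, and the work consists solely of recognizing the correct i.i.d.\ structure so that the stated theorem applies off the shelf.
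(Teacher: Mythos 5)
Your proof is correct and follows essentially the same route as the paper: both write $\wh{\pi}_{j,N}$ as the sample mean of the i.i.d.\ indicators $\1_{\{j\}}(Y_i)$, check the finite mean and variance, and invoke \Cref{slln} to get complete convergence. No gaps.
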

\begin{proof}
We can see $\1_{\{j\}}(Y_1),\ldots,\1_{\{j\}}(Y_N)$ as i.i.d.\ random variables with $\E\,[\1_{\{j\}}(Y_1)]=\pi_j<\infty$ and $\Var\,[\1_{\{j\}}(Y_1)]=\pi_j(1-\pi_j)<\infty$.
Since $N_Y(j) = \1_{\{j\}}(Y_1)+\cdots+\1_{\{j\}}(Y_N)$, $\wh{\pi}_{j,N}=N_Y(j)/N$ converges completely to $\pi_j$ by \Cref{slln}.
\end{proof}

\begin{lemma}\label{xyzw}
If $x,y,z,w\in\R$, then 
\begin{itemize}
\item[\rm{(a)}]\ $|\max\,\{x,y\}-\max\,\{z,w\}|\le |x-z|+|y-w|$,
\item[\rm{(b)}]\ $|\min\,\{x,y\}-\min\,\{z,w\}|\le |x-z|+|y-w|$.
\end{itemize}
\end{lemma}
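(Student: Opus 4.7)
The plan is to reduce both inequalities to a single identity. For any $a,b\in\R$, one has
\begin{equation*}
\max\{a,b\} = \frac{a+b+|a-b|}{2}, \qquad \min\{a,b\} = \frac{a+b-|a-b|}{2}.
\end{equation*}
Applying the first identity, I would write
\begin{equation*}
\max\{x,y\}-\max\{z,w\} = \frac{(x-z)+(y-w)}{2} + \frac{|x-y|-|z-w|}{2},
\end{equation*}
and then bound each summand. The first summand has absolute value at most $(|x-z|+|y-w|)/2$ by the triangle inequality. For the second, the reverse triangle inequality gives
\begin{equation*}
\bigl||x-y|-|z-w|\bigr| \le \bigl|(x-y)-(z-w)\bigr| = \bigl|(x-z)-(y-w)\bigr| \le |x-z|+|y-w|,
\end{equation*}
so its absolute value is also at most $(|x-z|+|y-w|)/2$. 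Summing yields (a). Part (b) follows by the identical calculation with the minus sign, or equivalently from the identity $\min\{a,b\} = -\max\{-a,-b\}$ together with (a).

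There is no real obstacle here; the only thing to watch is to use the reverse triangle inequality carefully so as not to lose a factor of two. If one preferred, a purely case-based proof would also work: assume without loss of generality that $\max\{x,y\}\ge \max\{z,w\}$ and, say, $x\ge y$, so that $\max\{x,y\}=x$ and $\max\{x,y\}-\max\{z,w\} \le x - z \le |x-z| \le |x-z|+|y-w|$; the symmetric subcases are handled identically. I would include only the identity-based argument in the final write-up since it is shorter and treats (a) and (b) uniformly.
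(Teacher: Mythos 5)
Your argument is correct, but it takes a different route from the paper. You reduce everything to the identity $\max\{a,b\}=\tfrac{1}{2}(a+b+|a-b|)$, split the difference of maxima into the two summands $\tfrac{1}{2}[(x-z)+(y-w)]$ and $\tfrac{1}{2}[|x-y|-|z-w|]$, and bound each by $\tfrac{1}{2}(|x-z|+|y-w|)$ using the triangle and reverse triangle inequalities; part (b) then comes for free from $\min\{a,b\}=-\max\{-a,-b\}$ (or the sign-flipped identity). The paper instead argues by cases: it assumes without loss of generality that $\max\{x,y\}\ge\max\{z,w\}$ and $x\ge y$, and checks the subcases $z\ge w$ and $z<w$ directly, then repeats an analogous case analysis separately for the minimum --- essentially the ``purely case-based proof'' you sketch as an alternative at the end. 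Your identity-based version is slightly slicker in that it handles (a) and (b) uniformly and makes the factor-of-two bookkeeping transparent, at the cost of invoking the closed-form expressions for $\max$ and $\min$; the paper's casework is more pedestrian but completely elementary and requires no identities beyond the definition of $\max$ and $\min$. Both are valid proofs of the lemma, and your bounds are tight in exactly the same way.
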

\begin{proof}
For (a), suppose $\max\,\{x,y\}\ge\max\,\{z,w\}$ and $x\ge y$ without loss of generality.
If $z\ge w$, then $|\max\,\{x,y\}-\max\,\{z,w\}|=|x-z|\le|x-z|+|y-w|$.
If $z<w$, then $|\max\,\{x,y\}-\max\,\{z,w\}|=|x-w|<|x-z|\le|x-z|+|y-w|$.

For (b), suppose $\min\,\{x,y\}\ge\min\,\{z,w\}$ and $x\ge y$ without loss of generality.
If $z\ge w$, then $|\min\,\{x,y\}-\min\,\{z,w\}|=|y-w|\le|x-z|+|y-w|$.
If $z<w$, then $|\min\,\{x,y\}-\min\,\{z,w\}|=|y-z|\le|x-z|\le|x-z|+|y-w|$.
\end{proof}

\begin{theorem}\label{suphconv}
For any positive integer $m$, $\sup_{\bs{v}\in\R_\le^m}|\wh{h}_N(\bs{v})-h(\bs{v})|$ converges completely to $0$ as $N\to\infty$.
\end{theorem}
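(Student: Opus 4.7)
The plan is to bound $|\wh{h}_N(\bs v)-h(\bs v)|$ uniformly in $\bs v$ by a quantity that does not depend on $\bs v$ and converges completely to zero. Using the formulas \eqref{hveq}--\eqref{hhveq}, we can write
\begin{equation*}
\wh{h}_N(\bs v)-h(\bs v)=\sum_{k=1}^{m+1}\Bigl[\max_j\{\wh{\varphi}_{j,N}(v_k,v_{k-1})\}-\max_j\{\varphi_j(v_k,v_{k-1})\}\Bigr]-\Bigl[\max_j\{\wh{\pi}_{j,N}\}-\max_j\{\pi_j\}\Bigr].
\end{equation*}
Applying \Cref{xyzw}(a) to each of the $m+2$ differences of maxima bounds $|\wh{h}_N(\bs v)-h(\bs v)|$ above by
\begin{equation*}
\sum_{k=1}^{m+1}\sum_{j=1}^{2}|\wh{\varphi}_{j,N}(v_k,v_{k-1})-\varphi_j(v_k,v_{k-1})|+\sum_{j=1}^{2}|\wh{\pi}_{j,N}-\pi_j|.
\end{equation*}

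Next, I would control each summand $|\wh{\varphi}_{j,N}(v,v')-\varphi_j(v,v')|$ by a term independent of $(v,v')$. Adding and subtracting $\pi_j[\wh{F}_{j,N}(v)-\wh{F}_{j,N}(v')]$ and using that the bracketed increments are in $[0,1]$, the triangle inequality gives
\begin{equation*}
|\wh{\varphi}_{j,N}(v,v')-\varphi_j(v,v')|\le|\wh{\pi}_{j,N}-\pi_j|+2\pi_j\sup_{x\in\R}|\wh{F}_{j,N}(x)-F_j(x)|,
\end{equation*}
where I also use $\wh{F}_{j,N}(\pm\infty)=F_j(\pm\infty)$ to handle $v_0$ and $v_{m+1}$. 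Plugging back in yields a deterministic (in $\bs v$) uniform bound:
\begin{equation*}
\sup_{\bs v\in\R_\le^m}|\wh{h}_N(\bs v)-h(\bs v)|\le\sum_{j=1}^{2}\Bigl[(2m+3)|\wh{\pi}_{j,N}-\pi_j|+2(m+1)\pi_j\sup_{x\in\R}|\wh{F}_{j,N}(x)-F_j(x)|\Bigr].
\end{equation*}

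Finally, each term on the right converges completely to $0$: the $|\wh{\pi}_{j,N}-\pi_j|$ by \Cref{pihconv}, and $\sup_x|\wh{F}_{j,N}(x)-F_j(x)|$ by \Cref{Fhconv}. A standard union-bound argument (if $\xi_i\to 0$ and $\eta_i\to 0$ completely then $\xi_i+\eta_i\to 0$ completely, since $\mathbb{P}(|\xi_i+\eta_i|>\epsilon)\le\mathbb{P}(|\xi_i|>\epsilon/2)+\mathbb{P}(|\eta_i|>\epsilon/2)$) shows that any finite positive linear combination also converges completely to $0$, and the supremum is dominated by such a combination, hence converges completely to $0$ as well.

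The only subtlety I anticipate is the already-flagged measurability question for the supremum, but as the authors note, this is deferred to \Cref{measurability}; granting measurability, the core of the argument is just the uniform deterministic bound above combined with \Cref{Fhconv,pihconv}. The main work is really just the bookkeeping to pass from the max-of-maxes structure of $\wh{h}_N$ and $h$ to a sum involving only the two universal quantities $\sup_x|\wh{F}_{j,N}(x)-F_j(x)|$ and $|\wh{\pi}_{j,N}-\pi_j|$.
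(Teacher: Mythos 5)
Your proof is correct and follows essentially the same route as the paper: the same decomposition via \eqref{hveq}--\eqref{hhveq} and \Cref{xyzw}(a), the same per-term bound $|\wh{\varphi}_{j,N}-\varphi_j|\le|\wh{\pi}_{j,N}-\pi_j|+2\pi_j\sup_x|\wh{F}_{j,N}(x)-F_j(x)|$, and the same conclusion from \Cref{Fhconv,pihconv} via a union bound. The only difference is your slightly looser constant $(2m+3)$ where the paper obtains $(m+2)$, which is immaterial to the argument.
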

\begin{proof}
For all $\bs{v}\in\R_\le^m$, we have
\begin{align*}
&\left|\wh{h}_N(\bs{v}) - h(\bs{v})\right| \\
&\le \sum_{k=1}^{m+1}\left|\max_j\left\{\wh{\varphi}_{j,N}(v_k, v_{k-1}) \right\} - \max_j\left\{\varphi_j(v_k, v_{k-1})\right\}\right| + \left|\max_j \left\{\wh{\pi}_{j,N}\right\} - \max_j \left\{\pi_j\right\}\right| \\
&\le \sum_{k=1}^{m+1}\sum_{j=1}^2 \left|\wh{\varphi}_{j,N}(v_k, v_{k-1}) - \varphi_j(v_k, v_{k-1}) \right| + \sum_{j=1}^2\left|\wh{\pi}_{j,N}-\pi_j \right|
\end{align*}
by \Cref{hveq}, \Cref{hhveq}, and \Cref{xyzw}.
Since
\begin{align*}
&\left|\wh{\varphi}_{j,N}(v_k, v_{k-1}) - \varphi_j(v_k, v_{k-1}) \right| \\
&=\Big| (\wh{\pi}_{j,N}-\pi_j)\left[\wh{F}_{j,N}(v_k)-\wh{F}_{j,N}(v_{k-1})\right]\\
&\qquad + \pi_j\left[\wh{F}_{j,N}(v_k)-F_j(v_k)\right] - \pi_j\left[\wh{F}_{j,N}(v_{k-1})-F_j(v_{k-1})\right]\Big| \\
&\le |\wh{\pi}_{j,N}-\pi_j|\left|\wh{F}_{j,N}(v_k)-\wh{F}_{j,N}(v_{k-1})\right|\\ 
&\qquad + \pi_j\left|\wh{F}_{j,N}(v_k)-F_j(v_k)\right| + \pi_j\left|\wh{F}_{j,N}(v_{k-1})-F_j(v_{k-1})\right| \\
&\le |\wh{\pi}_{j,N}-\pi_j| + 2\pi_j\sup_{x\in\R}\left|\wh{F}_{j,N}(x)-F_j(x)\right|,
\end{align*}
we obtain
\begin{equation*}\label{suphv}
\sup_{\bs{v}\in\R_\le^m}\left|\wh{h}_N(\bs{v}) - h(\bs{v})\right|
\le (m+2)\sum_{j=1}^2\left|\wh{\pi}_{j,N}-\pi_j\right|
+ 2(m+1)\sum_{j=1}^2\pi_j\sup_{x\in\R}\left|\wh{F}_{j,N}(x)-F_j(x)\right|.
\end{equation*}
Hence 
\begin{equation*}
\left\{\omega\in\Omega\relmiddle|\sup_{\bs{v}\in\R_\le^m}\left|\wh{h}_N(\bs{v})-h(\bs{v})\right|>\epsilon\right\}
\end{equation*}
is contained in
\begin{align*}
&\bigcup_{j=1}^2 \left\{\omega\in\Omega\relmiddle|\left|\wh{\pi}_{j,N}-\pi_j\right|>\frac{\epsilon}{4(m+2)}\right\}\\
&\ \cup \bigcup_{j=1}^2\left\{\omega\in\Omega\relmiddle|\sup_{x\in\R}\left|\wh{F}_{j,N}(x)-F_j(x)\right|>\frac{\epsilon}{8(m+1)}\right\},
\end{align*}
and therefore
\begin{align*}
&\sum_{N=1}^\infty\mathbb{P}\left(\left\{\omega\in\Omega\relmiddle|\sup_{\bs{v}\in\R_\le^m}\left|\wh{h}_N(\bs{v})-h(\bs{v})\right|>\epsilon\right\}\right)\\
&\le \sum_{j=1}^2\sum_{N=1}^\infty\mathbb{P}\left(\left\{\omega\in\Omega\relmiddle|\left|\wh{\pi}_{j,N}-\pi_j\right|>\frac{\epsilon}{4(m+2)}\right\}\right)\\ 
&\qquad + \sum_{j=1}^2\sum_{N=1}^\infty\mathbb{P}\left(\left\{\omega\in\Omega\relmiddle|\sup_{x\in\R}\left|\wh{F}_{j,N}(x)-F_j(x)\right|>\frac{\epsilon}{8(m+1)}\right\}\right)\\
&<\infty
\end{align*}
by \Cref{Fhconv,pihconv}.
\end{proof}

\begin{definition}
\upshape
Let $(A, d)$ be a metric space.
We define a discrepancy of $A_1\subset A$ from $A_2\subset A$ by
\begin{equation*}
D(A_1, A_2)=\sup_{a_1\in A_1}\left\{\inf_{a_2\in A_2}d(a_1,a_2)\right\}.
\end{equation*}
If $d$ is a Euclidean metric, we may write $D_\mathrm{E}$ in place of $D$.
\end{definition}

\begin{lemma}\label{supnotK}
Let $(A, d)$ be a metric space.
Let $g$ and $g_i$ (i=1,2,\ldots) be real functions on $A$ such that $\max\,\{g(t)\mid t\in A\}$ and $\max\,\{g_i(t)\mid t\in A\}$ exist.
Put $T=\argmax_{t\in A}\,\{g(t)\}$ and $T_i=\argmax_{t\in A}\,\{g_i(t)\}$.
Suppose $g$ is continuous on $A$, $\sup_{t\in A}|g_i(t)-g(t)|\to 0$ as $i\to\infty$, and there exists a compact set $K\subset A$ such that
\begin{equation*}
\sup\left\{g(t)\mid t\in A\setminus K\right\}<\max\left\{g(t)\mid t\in A\right\}.
\end{equation*}
Then $D(T_i,T)\to 0$ as $i\to\infty$.
\end{lemma}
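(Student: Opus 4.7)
The plan is to proceed by contradiction: suppose $D(T_i,T)\not\to 0$. Then there exist $\epsilon>0$, a subsequence $(i_k)$, and points $t_{i_k}\in T_{i_k}$ with $d(t_{i_k},T)\ge\epsilon$ for every $k$; I will derive a contradiction by extracting a sub-subsequence of $\{t_{i_k}\}$ that converges to a point of $T$.

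First I would show that $g(t_{i_k})\to M:=\max_{t\in A}g(t)$. Pick any $t_0\in T$, which exists because the maximum is attained. Since $t_{i_k}\in T_{i_k}$, we have $g_{i_k}(t_{i_k})\ge g_{i_k}(t_0)$. The uniform convergence $\sup_{t\in A}|g_i(t)-g(t)|\to 0$ gives both $g_{i_k}(t_0)\to g(t_0)=M$ and $|g(t_{i_k})-g_{i_k}(t_{i_k})|\to 0$, and combined with $g\le M$ on $A$ this pins down $g(t_{i_k})\to M$.

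Next I would invoke the coercivity-like hypothesis on $K$. Set $\eta:=M-\sup\{g(t)\mid t\in A\setminus K\}>0$. For $k$ large enough that $g(t_{i_k})>M-\eta$, the point $t_{i_k}$ cannot lie in $A\setminus K$, so eventually $t_{i_k}\in K$. Compactness of $K$ then yields a further subsequence $t_{i_{k_l}}\to t^*\in K$. Continuity of $g$ gives $g(t^*)=\lim_l g(t_{i_{k_l}})=M$, so $t^*\in T$; but then $d(t_{i_{k_l}},T)\le d(t_{i_{k_l}},t^*)\to 0$, contradicting $d(t_{i_{k_l}},T)\ge\epsilon$.

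The argument is conceptually standard, and no step looks like a serious obstacle. The only thing one must be careful about is that $T$ and the $T_i$ need not be singletons: the proof has to be phrased in terms of arbitrary selectors $t_i\in T_i$ and the distance $d(\cdot,T)$ to the whole argmax set, not to a distinguished maximizer. The coercivity hypothesis is the crucial ingredient, since it is exactly what confines any nearly-maximizing sequence to the compact $K$ so that Bolzano--Weierstrass can produce a limit point inside $A$.
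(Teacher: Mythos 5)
Your argument is correct: the selector $t_{i_k}\in T_{i_k}$ with $d(t_{i_k},T)>\epsilon$ exists whenever $D(T_{i_k},T)>\epsilon$, uniform convergence does force $g(t_{i_k})\to\max g$, the hypothesis $\sup\{g(t)\mid t\in A\setminus K\}<\max g$ confines the tail of the sequence to $K$, and sequential compactness plus continuity of $g$ at the limit point produce a maximizer $t^*\in T$, contradicting $d(t_{i_k},T)>\epsilon$. However, your route differs from the paper's. The paper argues directly and quantitatively: given $\epsilon>0$, it builds a $\delta$-neighborhood $T_\delta$ of $T$ inside $K$ on which $g$ stays above $w_1-w$ (via uniform continuity of $g$ on $K$), uses compactness of $K\setminus T_\delta$ to bound $g$ there strictly below $w_1$, and then picks a tolerance $w'>0$ so that the superlevel set $\{g>w_1-2w'\}$ is contained in $T_\delta$; any $t_1\in T_i$ with $\sup_A|g_i-g|<w'$ lands in that superlevel set, giving $D(T_i,T)\le\delta<\epsilon$ for all large $i$ at once, with no subsequence extraction or contradiction. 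Your proof is shorter and needs only continuity at a single limit point rather than uniform continuity, but the paper's effective formulation is what gets reused later: the proof of \Cref{DVn} cites exactly the threshold $w'$ ``as in the proof of \Cref{supnotK}'' to convert $\{\sup_{\bs v}|\wh h_N(\bs v)-h(\bs v)|<w'\}\subset\{D_\mathrm{E}(\wh{\V}_{n,N},\V_n)<\epsilon\}$ into the event inclusion needed for summability of probabilities (complete convergence). Your qualitative statement still implies that quantitative form by a further contradiction argument (if no such $w'$ existed one could build a sequence violating the lemma), so nothing is lost mathematically, but that extra step, or a quantitative restatement, would be needed to plug your proof into the paper's use of the lemma.
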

\begin{proof}
Put $w_1=\max\,\{g(t)\mid t\in A\}$, $w_0=\sup\,\{g(t)\mid t\in A\setminus K\}$, and $w=(w_1-w_0)/3$.
(Note that $w_0<w_0+w<w_0+2w=w_1-w<w_1$.)
For any $\epsilon>0$, there exists $\delta>0$ such that $\delta<\epsilon$ and $g(t)>w_1-w$ for all $t\in T_\delta = \cup_{t\in T}\,\{x\in K\mid d(x, t)<\delta\}$, since $g$ is uniformly continuous on $K$ (see \cite[Theorem 4.19]{rudin76}).
(Note that $T\subset K$.)
Put $w_0'=\max\,\{g(t)\mid t\in K\setminus T_\delta\}$ (this exists because $K\setminus T_\delta$ is compact) and $w'>0$ such that $w'<w$ and $w_0'<w_0'+2w'<w_1$.
(Note that $\{t\in A\mid g(t)>w_1-2w'\}\subset T_\delta$ since $w_1-2w'>w_1-2w>w_0$ and $w_1-2w'>w_0'$.)
Since $\sup_{t\in A}|g_i(t)-g(t)|\to 0$ as $i\to\infty$, there is an integer $M$ such that $i\ge M$ implies $\sup_{t\in A}|g_i(t)-g(t)|<w'$.
Hence, for any $i\ge M$ and for all $t_1\in T_i$, we have  $g(t_1)>w_1-2w'$ (because $g(t_1)+w'>g_i(t_1)\ge g_i(t_2)>w_1-w'$ where $t_2\in T$), and thus $t_1\in T_\delta$.
Therefore, $\sup_{t_1\in T_i}\,\{\inf_{t_2\in T}d(t_1, t_2)\}\le \delta<\epsilon$ for any $i\ge M$.
Since $\epsilon$ was arbitrary, the claim follows.
\end{proof}

\begin{lemma}\label{Kexist}
There exists a compact set $K\subset\R_\le^n$ such that
\begin{equation*}
\sup\left\{h(\bs{v})\mid \bs{v}\in\R_\le^n\setminus K\right\}
< \max\left\{h(\bs{v})\mid \bs{v}\in\R_\le^n\right\}.
\end{equation*}
\end{lemma}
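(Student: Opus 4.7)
The plan is to take
\[
K_M := \left\{\bs{v}=(v_1,\ldots,v_n)\in\R_\le^n\relmiddle| -M\le v_1,\ v_n\le M\right\},
\]
which is closed and bounded in $\R^n$ and hence compact, and then to show that for $M$ sufficiently large, $h(\bs{v})<h(\bs{c})$ holds uniformly over $\bs{v}\in\R_\le^n\setminus K_M$. Any such $\bs{v}$ satisfies either $v_1<-M$ or $v_n>M$, so the task reduces to bounding the excess of $h(\bs{v})$ over $h$ evaluated at a ``shortened'' vector obtained by dropping the offending endpoint.

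The key reduction I would establish is the following: when $v_1<-M$, setting $\bs{v}'=(v_2,\ldots,v_n)\in\R_\le^{n-1}$ and writing $a_j=\pi_jF_j(v_1)$ and $b_j=\varphi_j(v_2,v_1)$, the identity $\pi_jF_j(v_2)=a_j+b_j$ combined with \Cref{hveq} yields
\[
h(\bs{v})-h(\bs{v}') = \max_j\{a_j\}+\max_j\{b_j\}-\max_j\{a_j+b_j\}.
\]
Since each $a_j\ge 0$, we have $\max_j(a_j+b_j)\ge\max_j b_j$, so the right-hand side is bounded by $\max_j a_j\le F_1(-M)+F_2(-M)$. A symmetric argument for the case $v_n>M$, dropping instead the last coordinate to form $\bs{v}''=(v_1,\ldots,v_{n-1})$, gives the analogous bound $(1-F_1(M))+(1-F_2(M))$.

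Next I would invoke \Cref{hwhv2}: for $n\ge 2$, $\sup_{\bs{v}'\in\R_\le^{n-1}}h(\bs{v}')=h(\bs{c}')<h(\bs{c})$ for some $\bs{c}'\in\C_{n-1}$. Combining this with the reduction above, for every $\bs{v}\in\R_\le^n\setminus K_M$,
\[
h(\bs{v})\le h(\bs{c}')+F_1(-M)+F_2(-M)+(1-F_1(M))+(1-F_2(M)).
\]
Since the $F_j$-terms vanish as $M\to\infty$, choosing $M$ large enough forces $h(\bs{v})<h(\bs{c})$, as required.

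The hard part will be the edge case $n=1$, in which $\R_\le^{n-1}=\R_\le^0$ is degenerate and \Cref{hwhv2} does not directly apply. To handle it, I would note that the ``reduced'' vector is empty and \Cref{hveq} directly yields $h(v_1)\le F_1(-M)+F_2(-M)$ when $v_1<-M$ (symmetrically for $v_1>M$), while \Cref{g_cor} provides $h(c_1)=\int g(x)\,dx-\max_j\pi_j>0$, since the crossover at $c_1$ forces $g=\max\{\pi_1f_1,\pi_2f_2\}$ to strictly exceed $\pi_{j^*}f_{j^*}$ on a set of positive measure for any $j^*\in\argmax_j\pi_j$. With $h(c_1)>0$ in hand, the same tail-bound argument concludes.
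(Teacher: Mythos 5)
Your proof is correct and follows essentially the same route as the paper: restrict to a compact box determined by the tails of $F_1,F_2$, drop the out-of-range coordinate to compare $h(\bs{v})$ with a vector in $\R_\le^{n-1}$, and invoke \Cref{hwhv2} (with \Cref{delISc,delISc2}) for the strict gap between the $(n-1)$-dimensional supremum and $h(\bs{c})$. The differences are minor: your one-sided max identity gives a sharper tail bound than the paper's $3\epsilon$ triangle-inequality estimate via \Cref{xyzw}, and you treat the case $n=1$ explicitly (via $h(c_1)>0$ from \Cref{g_cor}), a degenerate case the paper's proof passes over implicitly.
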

\begin{proof}
By \Cref{delISc,delISc2,hwhv2}, there exist 
\begin{equation*}
M_m=\max\left\{h(\bs{v})\mid \bs{v}\in\R_\le^m\right\}\qquad (m=1\ldots,n)
\end{equation*}
and $M=\max\,\{M_1,\ldots,M_{n-1}\}<M_n$.
Take $\epsilon>0$ such that $\epsilon<(M_n-M)/3$.
We can take $\alpha,\beta\in\R$ such that $F_j(\alpha)<\epsilon$ and $1-F_j(\beta)<\epsilon$ ($j=1,2$), since $F_j$ are non-decreasing functions with $\lim_{x\to -\infty}F_j(x)=0$ and $\lim_{x\to \infty}F_j(x)=1$.
Let $K=[\alpha, \beta]^n\cap\R_\le^n$ and $\bs{v}=(v_1,\ldots,v_n)\in\R_\le^n\setminus K$.
Then $v_1<\alpha$ or $v_n>\beta$ holds.

Suppose $v_1<\alpha$.
Put $\bs{v}'=(v_2,\dots,v_n)$ and recall that $\varphi_j(v,v')=\pi_j[F_j(v)-F_j(v')]$.
Using \Cref{xyzw}, we obtain
\begin{align*}
\left|h(\bs{v})-h(\bs{v}') \right|
&= \left|\max_{j}\left\{\varphi_j(v_1,-\infty)\right\} + \max_{j}\left\{\varphi_j(v_2,v_1)\right\} - \max_{j}\left\{\varphi_j(v_2,-\infty)\right\} \right| \\
&\le \left|\max_{j}\left\{\varphi_j(v_1,-\infty)\right\}\right| + \left|\max_{j}\left\{\varphi_j(v_2,v_1)\right\} - \max_{j}\left\{\varphi_j(v_2,-\infty)\right\} \right| \\
&<\epsilon + \left|\varphi_1(v_2,v_1) - \varphi_1(v_2,-\infty) \right| + \left|\varphi_2(v_2,v_1) - \varphi_2(v_2,-\infty) \right| \\
&=\epsilon + \left|\varphi_1(v_1, -\infty) \right| + \left|\varphi_2(v_1, -\infty) \right| \\
&< 3\epsilon.
\end{align*}
Hence $|h(\bs{v})|\le |h(\bs{v})-h(\bs{v}')| + |h(\bs{v}')| < 3\epsilon + M$.
We can similarly prove that $|h(\bs{v})|<3\epsilon + M$ for the case $v_n>\beta$.
Therefore, $\sup\,\{h(\bs{v})\mid \bs{v}\in\R_\le^n\setminus K\}\le 3\epsilon + M < (M_n-M) + M = M_n$.
This completes the proof.
\end{proof}

\begin{theorem}\label{DVn}
The discrepancy $D_\mathrm{E}(\wh{\V}_{n,N}, \V_n)$ converges completely to $0$ as $N\to\infty$.
\end{theorem}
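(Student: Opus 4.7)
The plan is to reduce to Lemma \ref{supnotK}, applied with $A=\R_\le^n$, $g=h$, and $g_i=\wh{h}_N$. The subtlety is that $\wh{\V}_{n,N}$ is an argmax over the finite set $\wh{\R}_N^n$, whereas Lemma \ref{supnotK} produces convergence of the argmax over the full domain $\R_\le^n$. To bridge this, I will introduce the auxiliary set
\begin{equation*}
\wh{\V}'_{n,N} = \argmax_{\bs{v}\in\R_\le^n}\left\{\wh{h}_N(\bs{v})\right\},
\end{equation*}
prove the inclusion $\wh{\V}_{n,N}\subset\wh{\V}'_{n,N}$, and transfer the Lemma-\ref{supnotK} conclusion along this inclusion.

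To get the inclusion I will exploit a superadditivity property of $\wh{h}_N$: for any Borel splitting $B=B_1\cup B_2$, one has $\max_j N_{XY}(B_1,j)+\max_j N_{XY}(B_2,j)\ge\max_j N_{XY}(B,j)$, so refining an $n$-ary split can only increase $\wh{h}_N$. Starting from an arbitrary maximizer $\bs{v}^*$ of $\wh{h}_N$ over $\R_\le^n$, I will produce $\bs{v}'\in\wh{\R}_N^n$ with $\wh{h}_N(\bs{v}')\ge\wh{h}_N(\bs{v}^*)$ by snapping each coordinate of $\bs{v}^*$ to the midpoint $Z_i$ of its containing inter-order-statistic interval (which preserves $\wh{h}_N$), pulling any coordinate that lies outside $[X_{N:1},X_{N:N}]$ into the data range (which activates an empty extreme bin and, by superadditivity, only increases $\wh{h}_N$), and perturbing rightward any coordinate coinciding with an order statistic. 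This yields $\max_{\wh{\R}_N^n}\wh{h}_N=\max_{\R_\le^n}\wh{h}_N$, hence $\wh{\V}_{n,N}\subset\wh{\V}'_{n,N}$ and $D_\mathrm{E}(\wh{\V}_{n,N},\V_n)\le D_\mathrm{E}(\wh{\V}'_{n,N},\V_n)$.

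The hypotheses of Lemma \ref{supnotK} will hold on $A=\R_\le^n$: $h$ is continuous by Proposition \ref{delI_F} combined with continuity of each $F_j$; $h$ attains its maximum uniquely at $\bs{c}$ by Proposition \ref{delISc}; $\wh{h}_N$ attains its maximum because it is piecewise constant with finitely many values; and a compact $K$ with $\sup_{\R_\le^n\setminus K}h<\max h$ is given by Lemma \ref{Kexist}. To upgrade to complete convergence I will reread the proof of Lemma \ref{supnotK} and extract the fact that its threshold $w'>0$ depends only on $\epsilon$, $h$, and $K$, giving the $N$-uniform inclusion
\begin{equation*}
\left\{\omega\in\Omega\relmiddle|D_\mathrm{E}(\wh{\V}'_{n,N},\V_n)>\epsilon\right\}\subset\left\{\omega\in\Omega\relmiddle|\sup_{\bs{v}\in\R_\le^n}\left|\wh{h}_N(\bs{v})-h(\bs{v})\right|\ge w'\right\},
\end{equation*}
after which summing over $N$ and invoking Theorem \ref{suphconv} delivers $\sum_N\mathbb{P}(D_\mathrm{E}(\wh{\V}_{n,N},\V_n)>\epsilon)<\infty$, which is complete convergence.

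The hardest part will be the inclusion $\wh{\V}_{n,N}\subset\wh{\V}'_{n,N}$. Although the superadditivity principle itself is elementary, the verification has to treat boundary cases explicitly, namely coordinates of $\bs{v}^*$ that coincide with an order statistic or lie beyond $[X_{N:1},X_{N:N}]$ (so that the corresponding bin is empty); these are exactly the configurations not directly representable on the grid $\wh{\R}_N^n$. Once this reduction is in hand, everything else is essentially bookkeeping: checking the Lemma-\ref{supnotK} hypotheses and tracing its proof closely enough to confirm that the threshold $w'$ is $N$-uniform so that the complete-convergence conclusion of Theorem \ref{suphconv} carries over to the discrepancy $D_\mathrm{E}(\wh{\V}_{n,N},\V_n)$.
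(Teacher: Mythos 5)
Your proposal is correct and follows essentially the same route as the paper: apply Lemma \ref{supnotK} on $A=\R_\le^n$ with $g=h$, $g_i=\wh{h}_N$, using Lemma \ref{Kexist} for the compact set, the $N$-uniform threshold $w'$ from that lemma's proof, and Theorem \ref{suphconv} to sum the probabilities. The bridging fact $\max_{\bs{v}\in\wh{\R}_N^n}\wh{h}_N(\bs{v})=\max_{\bs{v}\in\R_\le^n}\wh{h}_N(\bs{v})$, hence $\wh{\V}_{n,N}\subset\argmax_{\bs{v}\in\R_\le^n}\{\wh{h}_N(\bs{v})\}$, is exactly the parenthetical observation in the paper's proof; you simply spell out its verification (snapping to the $Z_i$ and superadditivity) in more detail than the paper does.
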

\begin{proof}
In \Cref{supnotK}, let $(A, d)$ be the subspace $\R_\le^n$ of the Euclidean metric space $\R^n$, $g=h$ (which is continuous on $\R_\le^n$), and $g_i=\wh{h}_i$.
It follows from \Cref{VmCm,Kexist} that for any $\epsilon>0$, we can take $w'>0$ as in the proof of \Cref{supnotK}, and observe that $D_\mathrm{E}(\wh{\V}_{n,N}, \V_n)<\epsilon$ if $\sup_{\bs{v}\in\R_\le^n}|\wh{h}_N(\bs{v})-h(\bs{v})|<w'$.
(Note that $\max\,\{\wh{h}_N(\bs{v})\mid \bs{v}\in\wh{\R}_N^n\}=\max\,\{\wh{h}_N(\bs{v})\mid \bs{v}\in\R_\le^n\}$, hence $\wh{\V}_{n,N}\subset \argmax_{\bs{v}\in\R_\le^n}\{\wh{h}_N(\bs{v})\}$.)
This means that 
\begin{equation*}
\left\{\omega\in\Omega\relmiddle| \sup_{\bs{v}\in\R_\le^n}\left|\wh{h}_N(\bs{v})-h(\bs{v})\right|<w'\right\}\subset
\left\{\omega\in\Omega\relmiddle| D_\mathrm{E}\left(\wh{\V}_{n,N}, \V_n\right)<\epsilon\right\},
\end{equation*}
hence
\begin{equation*}
\left\{\omega\in\Omega\relmiddle| D_\mathrm{E}\left(\wh{\V}_{n,N}, \V_n\right)>\epsilon\right\}
\subset
\left\{\omega\in\Omega\relmiddle| \sup_{\bs{v}\in\R_\le^n}\left|\wh{h}_N(\bs{v})-h(\bs{v})\right|>\frac{w'}{2}\right\},
\end{equation*}
and therefore
\begin{align*}
&\sum_{N=1}^\infty\mathbb{P}\left(\left\{\omega\in\Omega\relmiddle| D_\mathrm{E}\left(\wh{\V}_{n,N}, \V_n\right)>\epsilon\right\}\right)\\
&\le \sum_{N=1}^\infty\mathbb{P}\left(\left\{\omega\in\Omega\relmiddle| \sup_{\bs{v}\in\R_\le^n}\left|\wh{h}_N(\bs{v})-h(\bs{v})\right|>\frac{w'}{2}\right\}\right)\\
&<\infty
\end{align*}
by \Cref{suphconv}.
\end{proof}

\begin{corollary}\label{v_conv_app}
The estimator $\wh{\bs{v}}_N\in\wh{\V}_{n,N}$ converges completely to $\bs{c}$ as $N\to\infty$.
\end{corollary}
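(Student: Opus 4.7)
The plan is to deduce this corollary almost immediately from \Cref{DVn}, using the uniqueness of the maximizer of $h$ on $\R_\le^n$.

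First, I would invoke \Cref{delISc} to observe that $\V_n=\{\bs{c}\}$, since the supremum of $h(\bs{v})$ over $\bs{v}\in\R_\le^n$ is uniquely attained at $\bs{v}=\bs{c}$. This collapses the discrepancy $D_\mathrm{E}(\wh{\V}_{n,N},\V_n)$ into the simpler quantity $\sup_{\bs{v}\in\wh{\V}_{n,N}}\|\bs{v}-\bs{c}\|$.

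Next, since $\wh{\bs{v}}_N\in\wh{\V}_{n,N}$ by construction, we have the pointwise bound
\begin{equation*}
\left\|\wh{\bs{v}}_N-\bs{c}\right\| \;=\; \inf_{\bs{v}'\in\V_n}\left\|\wh{\bs{v}}_N-\bs{v}'\right\| \;\le\; D_\mathrm{E}\bigl(\wh{\V}_{n,N},\V_n\bigr),
\end{equation*}
holding for every $\omega\in\Omega$. Consequently, for any $\epsilon>0$,
\begin{equation*}
\left\{\omega\in\Omega\relmiddle|\left\|\wh{\bs{v}}_N(\omega)-\bs{c}\right\|>\epsilon\right\}
\;\subset\;
\left\{\omega\in\Omega\relmiddle|D_\mathrm{E}\bigl(\wh{\V}_{n,N}(\omega),\V_n\bigr)>\epsilon\right\}.
\end{equation*}

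Finally, summing over $N$ and applying \Cref{DVn} yields $\sum_{N=1}^\infty\mathbb{P}(\{\|\wh{\bs{v}}_N-\bs{c}\|>\epsilon\})<\infty$, which is precisely complete convergence of $\wh{\bs{v}}_N$ to $\bs{c}$ in the sense of \Cref{ascmp}. There is no real obstacle here; the substance of the argument was already carried out in \Cref{DVn}, and the only subtlety is the measurability of $\wh{\bs{v}}_N$ (flagged by the authors to be handled separately in \Cref{measurability}), which is needed to make sense of the probability on the left.
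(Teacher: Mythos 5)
Your proposal is correct and follows essentially the same route as the paper's proof: both use \Cref{delISc} to identify $\V_n=\{\bs{c}\}$, bound $\|\wh{\bs{v}}_N-\bs{c}\|$ by $D_\mathrm{E}(\wh{\V}_{n,N},\V_n)$ since $\wh{\bs{v}}_N\in\wh{\V}_{n,N}$, and conclude by \Cref{DVn}. The measurability remark likewise mirrors the paper's deferral to \Cref{measurability}.
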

\begin{proof}
Since $\V_n=\{\bs{c}\}$ by \Cref{delISc}, we have  $D_\mathrm{E}(\wh{\V}_{n,N}, \V_n) = \sup_{\bs{v}\in\wh{\V}_{n,N}}\|\bs{v}-\bs{c}\| \ge \|\wh{\bs{v}}_N-\bs{c}\|$.
Hence the claim follows from \Cref{DVn}.
\end{proof}

\begin{theorem}\label{rho_prf}
The estimator $\wh{\rho}_{\wh{\bs{v}}_N,N}$ converges completely to $\rho(\pi_1f_1,\pi_2f_2)$ as $N\to\infty$.
\end{theorem}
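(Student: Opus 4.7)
The plan is to bound $|\wh{\rho}_{\wh{\bs{v}}_N,N} - \rho(\pi_1f_1,\pi_2f_2)|$ by a finite sum of quantities each of which converges completely to $0$, and then apply the union bound. Starting from \Cref{rhoheq} and \Cref{rhoeq}, and setting $\wh{v}_{0,N}=c_0=-\infty$ and $\wh{v}_{n+1,N}=c_{n+1}=\infty$, we may rewrite
\begin{equation*}
\wh{\rho}_{\wh{\bs{v}}_N,N} - \rho(\pi_1f_1,\pi_2f_2) = \sum_{k=1}^{n+1}\left[\min_j\left\{\wh{\varphi}_{j,N}(\wh{v}_{k,N},\wh{v}_{k-1,N})\right\} - \min_j\left\{\varphi_j(c_k,c_{k-1})\right\}\right].
\end{equation*}
By \Cref{xyzw}(b) applied term by term, the absolute value of the right-hand side is at most $\sum_{k=1}^{n+1}\sum_{j=1}^2\bigl|\wh{\varphi}_{j,N}(\wh{v}_{k,N},\wh{v}_{k-1,N}) - \varphi_j(c_k,c_{k-1})\bigr|$.

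Next, I would follow the same algebraic decomposition used inside the proof of \Cref{suphconv} and write each summand as
\begin{equation*}
\bigl|(\wh{\pi}_{j,N}-\pi_j)[\wh{F}_{j,N}(\wh{v}_{k,N})-\wh{F}_{j,N}(\wh{v}_{k-1,N})] + \pi_j[\wh{F}_{j,N}(\wh{v}_{k,N})-F_j(c_k)] - \pi_j[\wh{F}_{j,N}(\wh{v}_{k-1,N})-F_j(c_{k-1})]\bigr|,
\end{equation*}
which by the triangle inequality (using $|\wh{F}_{j,N}(\wh{v}_{k,N})-\wh{F}_{j,N}(\wh{v}_{k-1,N})|\le 1$) is bounded above by $|\wh{\pi}_{j,N}-\pi_j| + \pi_j|\wh{F}_{j,N}(\wh{v}_{k,N})-F_j(c_k)| + \pi_j|\wh{F}_{j,N}(\wh{v}_{k-1,N})-F_j(c_{k-1})|$. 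The factor $|\wh{\pi}_{j,N}-\pi_j|$ is handled by \Cref{pihconv}. At the extreme indices $k\in\{0,n+1\}$, the corresponding difference $|\wh{F}_{j,N}(\wh{v}_{k,N})-F_j(c_k)|$ vanishes identically because $\wh{F}_{j,N}(\pm\infty)=F_j(\pm\infty)$. For $k\in\{1,\ldots,n\}$, I would insert the intermediate value $F_j(\wh{v}_{k,N})$ to split
\begin{equation*}
\left|\wh{F}_{j,N}(\wh{v}_{k,N})-F_j(c_k)\right| \le \sup_{x\in\R}\left|\wh{F}_{j,N}(x)-F_j(x)\right| + \left|F_j(\wh{v}_{k,N})-F_j(c_k)\right|,
\end{equation*}
the first term on the right converging completely to $0$ by \Cref{Fhconv}.

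The essentially only new step is to show that $|F_j(\wh{v}_{k,N})-F_j(c_k)|$ converges completely to $0$ for each $k\in\{1,\ldots,n\}$; this is where continuity of $F_j$ at $c_k$ (automatic since $F_j$ is the CDF of a continuous PDF) meets the pre-established complete convergence of $\wh{\bs{v}}_N$. Given $\epsilon>0$, I would pick $\delta>0$ so that $|x-c_k|<\delta$ implies $|F_j(x)-F_j(c_k)|<\epsilon$; then $\{|F_j(\wh{v}_{k,N})-F_j(c_k)|>\epsilon\}\subset\{|\wh{v}_{k,N}-c_k|\ge\delta\}\subset\{\|\wh{\bs{v}}_N-\bs{c}\|\ge\delta\}$, whose probabilities are summable in $N$ by \Cref{v_conv_app}. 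Combining all ingredients through the elementary inclusion $\{|a_1+\cdots+a_L|>\epsilon\}\subset\bigcup_{i=1}^L\{|a_i|>\epsilon/L\}$ applied to the (finite) sum obtained above yields the stated complete convergence. I do not anticipate any serious obstacle beyond the bookkeeping that couples the continuity-of-$F_j$ step to the complete convergence of $\wh{\bs{v}}_N$.
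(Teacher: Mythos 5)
Your proposal is correct and follows essentially the same route as the paper's proof: bound $|\wh{\rho}_{\wh{\bs{v}}_N,N}-\rho(\pi_1f_1,\pi_2f_2)|$ via \Cref{xyzw}(b), split each term using $|\wh{\pi}_{j,N}-\pi_j|$, the Glivenko--Cantelli bound $\sup_x|\wh{F}_{j,N}(x)-F_j(x)|$, and $|F_j(\wh{v}_k)-F_j(c_k)|$ controlled by continuity of $F_j$ at $c_k$ together with the complete convergence of $\wh{\bs{v}}_N$ from \Cref{v_conv_app}, then conclude by a union bound. The only differences are cosmetic bookkeeping (your single intermediate insertion and generic $\epsilon/L$ constants versus the paper's explicit $\epsilon/(12(n+1))$, $\epsilon/(6n)$, $\delta/2$ thresholds).
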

\begin{proof}
From \Cref{rhoeq,rhoheq}, we have
\begin{align}
&\rho(\pi_1f_1, \pi_2f_2) = \sum_{k=1}^{n+1} \min_j\left\{\pi_j\left[F_j(c_k)-F_j(c_{k-1})\right]\right\}, \\
&\wh{\rho}_{\wh{\bs{v}}_N,N} = \sum_{k=1}^{n+1}\min_j\left\{\wh{\pi}_{j,N}\left[\wh{F}_{j,N}(\wh{v}_k)-\wh{F}_{j,N}(\wh{v}_{k-1})\right]\right\},
\end{align}
where $\wh{\bs{v}}_N=(\wh{v}_1,\ldots,\wh{v}_n)\in\wh{\V}_{n,N}$, $\wh{v}_0=-\infty$, and $\wh{v}_{n+1}=\infty$.
By \Cref{xyzw},
\begin{align*}
&\left|\wh{\rho}_{\wh{\bs{v}}_N,N} - \rho(\pi_1f_1,\pi_2f_2) \right|\\
&\le \sum_{k=1}^{n+1}\sum_{j=1}^2\left|\wh{\pi}_{j,N}\left[\wh{F}_{j,N}(\wh{v}_k)-\wh{F}_{j,N}(\wh{v}_{k-1})\right] - \pi_j\left[F_j(c_k)-F_j(c_{k-1})\right] \right|,
\end{align*}
where 
\begin{align*}
&\left|\wh{\pi}_{j,N}\left[\wh{F}_{j,N}(\wh{v}_k)-\wh{F}_{j,N}(\wh{v}_{k-1})\right] - \pi_j\left[F_j(c_k)-F_j(c_{k-1})\right] \right| \\
&\le \left|\wh{\pi}_{j,N}\left[\wh{F}_{j,N}(\wh{v}_k)-\wh{F}_{j,N}(\wh{v}_{k-1})\right] - \pi_j\left[F_j(\wh{v}_k)-F_j(\wh{v}_{k-1})\right] \right| \\
&\qquad + \left|\pi_j\left[F_j(\wh{v}_k)-F_j(\wh{v}_{k-1})\right] - \pi_j\left[F_j(c_k)-F_j(c_{k-1})\right] \right| \\
&\le \left|\wh{\pi}_{j,N}\wh{F}_{j,N}(\wh{v}_k) - \pi_jF_j(\wh{v}_k)\right| + \left|\wh{\pi}_{j,N}\wh{F}_{j,N}(\wh{v}_{k-1}) - \pi_jF_j(\wh{v}_{k-1})\right| \\
&\qquad + \pi_j\left|F_j(\wh{v}_k)-F_j(c_k)\right| + \pi_j\left|F_j(\wh{v}_{k-1})-F_j(c_{k-1}) \right| \\
&\le \left|\wh{\pi}_{j,N}\wh{F}_{j,N}(\wh{v}_k) - \pi_j\wh{F}_{j,N}(\wh{v}_k)\right| + \left|\pi_j\wh{F}_{j,N}(\wh{v}_k) - \pi_jF_j(\wh{v}_k)\right| \\
&\qquad + \left|\wh{\pi}_{j,N}\wh{F}_{j,N}(\wh{v}_{k-1}) - \pi_j\wh{F}_{j,N}(\wh{v}_{k-1})\right| + \left|\pi_j\wh{F}_{j,N}(\wh{v}_{k-1}) - \pi_jF_j(\wh{v}_{k-1})\right|\\
&\qquad + \pi_j\left|F_j(\wh{v}_k)-F_j(c_k)\right| + \pi_j\left|F_j(\wh{v}_{k-1})-F_j(c_{k-1}) \right| \\
&\le 2\left|\wh{\pi}_{j,N}-\pi_j \right| + \pi_j\left|\wh{F}_{j,N}(\wh{v}_k) - F_j(\wh{v}_k)\right| + \pi_j\left|\wh{F}_{j,N}(\wh{v}_{k-1}) - F_j(\wh{v}_{k-1})\right| \\
&\qquad + \pi_j\left|F_j(\wh{v}_k)-F_j(c_k)\right| + \pi_j\left|F_j(\wh{v}_{k-1})-F_j(c_{k-1}) \right|.
\end{align*}
Hence
\begin{equation}\label{rhohineq}
\begin{aligned}
&\left|\wh{\rho}_{\wh{\bs{v}}_N,N} - \rho(\pi_1f_1,\pi_2f_2) \right| \\
&\le 2(n+1)\sum_{j=1}^2\left|\wh{\pi}_{j,N}-\pi_j \right|\\
&\qquad + \sum_{k=1}^{n+1}\sum_{j=1}^2\pi_j\left|\wh{F}_{j,N}(\wh{v}_k) - F_j(\wh{v}_k)\right| + \sum_{k=1}^{n+1}\sum_{j=1}^2\pi_j\left|\wh{F}_{j,N}(\wh{v}_{k-1}) - F_j(\wh{v}_{k-1})\right| \\
&\qquad + \sum_{k=1}^{n+1}\sum_{j=1}^2\pi_j\left|F_j(\wh{v}_k)-F_j(c_k)\right| + \sum_{k=1}^{n+1}\sum_{j=1}^2\pi_j\left|F_j(\wh{v}_{k-1})-F_j(c_{k-1}) \right|\\
&= 2(n+1)\sum_{j=1}^2\left|\wh{\pi}_{j,N}-\pi_j \right|\\
&\qquad + 2\sum_{k=1}^{n}\sum_{j=1}^2\pi_j\left|\wh{F}_{j,N}(\wh{v}_k) - F_j(\wh{v}_k)\right| + 2\sum_{k=1}^{n}\sum_{j=1}^2\pi_j\left|F_j(\wh{v}_k)-F_j(c_k)\right|.
\end{aligned}
\end{equation}
For any $\epsilon>0$, there exists $\delta>0$ such that $|F_j(x)-F_j(c_k)|<\epsilon/(6n)$ for all $x\in\R$ with $|x-c_k|<\delta$ ($j=1,2;\ k=1,\ldots,n$).
If
\begin{equation*}
\left|\wh{\pi}_{j,N}-\pi_j \right| < \frac{\epsilon}{12(n+1)},\qquad
\sup_{x\in\R}\left|\wh{F}_{j,N}(x)-F_j(x)\right| < \frac{\epsilon}{6n},\qquad
\left|\wh{v}_k-c_k \right|<\delta,
\end{equation*}
for $j=1,2$ and $k=1,\ldots,n$, then
\begin{equation*}
\left|\wh{\rho}_{\wh{\bs{v}}_N,N} - \rho(\pi_1f_1,\pi_2f_2) \right|
<2(n+1)\frac{2\epsilon}{12(n+1)}+2n(\pi_1+\pi_2)\frac{\epsilon}{6n} + 2n(\pi_1+\pi_2)\frac{\epsilon}{6n} = \epsilon
\end{equation*}
by \Cref{rhohineq}.
Hence $\{\omega\in\Omega\mid |\wh{\rho}_{\wh{\bs{v}}_N,N}-\rho(\pi_1f_1,\pi_2f_2)|>\epsilon\}$ is contained in 
\begin{align*}
&\bigcup_{j=1}^2 \left\{\omega\in\Omega\relmiddle|\left|\wh{\pi}_{j,N}-\pi_j\right|>\frac{\epsilon}{24(n+1)}\right\}\\
&\ \cup \bigcup_{j=1}^2 \left\{\omega\in\Omega\relmiddle|\sup_{x\in\R}\left|\wh{F}_{j,N}(x)-F_j(x)\right|>\frac{\epsilon}{12n}\right\}\\
&\ \cup \left\{\omega\in\Omega\relmiddle| \left\|\wh{\bs{v}}_N-\bs{c}\right\|>\frac{\delta}{2}\right\},
\end{align*}
and therefore
\begin{align*}
&\sum_{N=1}^\infty\mathbb{P}\left(\left\{\omega\in\Omega\relmiddle| \left|\wh{\rho}_{\wh{\bs{v}}_N,N}-\rho(\pi_1f_1,\pi_2f_2)\right|>\epsilon\right\}\right)\\
&\le \sum_{j=1}^2\sum_{N=1}^\infty\mathbb{P}\left(\left\{\omega\in\Omega\relmiddle|\left|\wh{\pi}_{j,N}-\pi_j\right|>\frac{\epsilon}{24(n+1)}\right\}\right)\\ 
&\qquad + \sum_{j=1}^2\sum_{N=1}^\infty\mathbb{P}\left(\left\{\omega\in\Omega\relmiddle|\sup_{x\in\R}\left|\wh{F}_{j,N}(x)-F_j(x)\right|>\frac{\epsilon}{12n}\right\}\right)\\
&\qquad + \sum_{N=1}^\infty\mathbb{P}\left(\left\{\omega\in\Omega\relmiddle| \left\|\wh{\bs{v}}_N-\bs{c}\right\|>\frac{\delta}{2}\right\}\right) \\
&<\infty
\end{align*}
by \Cref{Fhconv}, \Cref{pihconv}, and \Cref{v_conv_app}.
\end{proof}

Note that \Cref{v_conv_app,rho_prf} are exactly \Cref{v_conv,rho_conv}, respectively.

As stated above, we have estimated $\bs{c}$ as $\wh{\bs{v}}_N\in\wh{\V}_{n,N}$.
In fact, it is possible to estimate $\bs{c}$ in another way.
For ${\bs v}=(v_1,\ldots,v_m)\in \R_\le^m$ with $m$ a positive integer, let us define
\begin{equation}\label{rhoeq3}
\rho_{\bs v} = \sum_{k=1}^{m+1}\min_j\left\{\varphi_j(v_k,v_{k-1})\right\},
\end{equation}
where $v_0=-\infty$ and $v_{m+1}=\infty$.
Note that we have 
\begin{equation}\label{rhoheq3}
\wh{\rho}_{\bs{v},N} = \sum_{k=1}^{m+1}\min_j\left\{\wh{\varphi}_j(v_k,v_{k-1})\right\}
\end{equation}
by \Cref{rhoheq}.
Here recall that
\begin{align*}
&\varphi_j(v_k,v_{k-1})=\pi_j[F_j(v_k)-F_j(v_{k-1})],\\
&\wh{\varphi}_{j,N}(v_k,v_{k-1})=\wh{\pi}_{j,N}[\wh{F}_{j,N}(v_k)-\wh{F}_{j,N}(v_{k-1})].
\end{align*}

\begin{lemma}\label{hplusrho}
For $\bs{v}=(v_1,\ldots,v_m)\in\R_\le^m$ with $m$ a positive integer, we have
\begin{align}
&h(\bs{v})+\rho_{\bs v} = 1-\max_j\left\{\pi_j\right\},\label{eq:hrho1}\\
&\wh{h}_N(\bs{v}) + \wh{\rho}_{\bs{v},N} = 1-\max_j\left\{\wh{\pi}_{j,N}\right\}.\label{eq:hrho2}
\end{align}
\end{lemma}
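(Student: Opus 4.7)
The plan is to apply summand-by-summand the elementary identity $\max\{a,b\}+\min\{a,b\}=a+b$, after which the two sums in \Cref{eq:hrho1,eq:hrho2} collapse by telescoping of the cumulative distribution functions.

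For \Cref{eq:hrho1}, I would start from the explicit formulas $h(\bs v)=\sum_{k=1}^{m+1}\max_j\{\varphi_j(v_k,v_{k-1})\}-\max_j\{\pi_j\}$ (see \Cref{hveq}) and the definition $\rho_{\bs v}=\sum_{k=1}^{m+1}\min_j\{\varphi_j(v_k,v_{k-1})\}$ in \Cref{rhoeq3}. Adding term by term and applying $\max+\min=\mathrm{sum}$ in each summand yields
\begin{equation*}
h(\bs v)+\rho_{\bs v}=\sum_{k=1}^{m+1}\bigl[\varphi_1(v_k,v_{k-1})+\varphi_2(v_k,v_{k-1})\bigr]-\max_j\{\pi_j\}.
\end{equation*}
Since $\varphi_j(v_k,v_{k-1})=\pi_j[F_j(v_k)-F_j(v_{k-1})]$, the sum over $k$ telescopes to $\pi_j[F_j(\infty)-F_j(-\infty)]=\pi_j$ for each $j$, and $\pi_1+\pi_2=1$ gives $h(\bs v)+\rho_{\bs v}=1-\max_j\{\pi_j\}$ as required.

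For the empirical version \Cref{eq:hrho2}, I would proceed identically, using \Cref{hhveq} and \Cref{rhoheq3}, together with the empirical boundary conventions $\wh F_{j,N}(-\infty)=0$ and $\wh F_{j,N}(\infty)=1$ from \Cref{def:A1,hhat}. The sum telescopes to $\wh\pi_{1,N}+\wh\pi_{2,N}$, which equals $1$ because every $Y_i$ takes a value in $\{1,2\}$, so $N_Y(1)+N_Y(2)=N$. The only edge case worth checking is $N_Y(j)=0$ for some $j$; but then $\wh F_{j,N}\equiv 0$ by convention and $\wh\pi_{j,N}=0$, so the corresponding $\wh\varphi_{j,N}$ terms vanish and the identity is preserved.

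I do not anticipate a genuine obstacle: the proof is a one-line algebraic manipulation per equation once the right formulas for $h$ and $\wh h_N$ are invoked. The only mild care needed is to ensure the telescoping is applied correctly at the endpoints $v_0=-\infty$ and $v_{m+1}=\infty$ using the declared values $F_j(\pm\infty),\wh F_{j,N}(\pm\infty)\in\{0,1\}$, and to note the empirical edge case above so that \Cref{eq:hrho2} holds for every $\omega\in\Omega$ rather than only almost surely.
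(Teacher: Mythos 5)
Your proposal is correct and follows essentially the same route as the paper: the paper also adds the two sums term by term, using the fact that the argmax and argmin indices together exhaust $\{1,2\}$ (equivalently, $\max+\min=\mathrm{sum}$), and then telescopes $\sum_k \varphi_j(v_k,v_{k-1})$ to $\pi_j$ (resp.\ $\wh{\pi}_{j,N}$) with $\pi_1+\pi_2=\wh{\pi}_{1,N}+\wh{\pi}_{2,N}=1$. Your remark on the $N_Y(j)=0$ edge case is a harmless extra observation not needed in the paper's argument, since $\wh{\pi}_{j,N}=0$ makes those terms vanish in any case.
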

\begin{proof}
For $k=1,\ldots,m+1$, choose 
\begin{align*}
&j_k\in\argmax_j\left\{\varphi_j(v_k,v_{k-1})\right\},\\ &l_k\in\argmin_j\left\{\varphi_j(v_k,v_{k-1})\right\}
\end{align*}
such that $\{j_k,l_k\}=\{1,2\}$, where $v_0=-\infty$ and $v_{m+1}=\infty$.
By \Cref{hveq,rhoeq3}, we have
\begin{align*}
h(\bs{v})+\rho_{\bs v}
&= \sum_{k=1}^{m+1} \left[\max_j\left\{\varphi_j(v_k,v_{k-1})\right\} + \min_j\left\{\varphi_j(v_k,v_{k-1})\right\}\right] - \max_j\left\{\pi_j\right\}\\
&= \sum_{k=1}^{m+1} \left[\varphi_{j_k}(v_k,v_{k-1}) + \varphi_{l_k}(v_k,v_{k-1})\right] - \max_j\left\{\pi_j\right\}\\
&= \sum_{k=1}^{m+1} \left[\varphi_1(v_k,v_{k-1}) + \varphi_2(v_k,v_{k-1})\right] - \max_j\left\{\pi_j\right\}\\
&= \sum_{j=1}^2\sum_{k=1}^{m+1}\pi_j\left[F_j(v_k)-F_j(v_{k-1})\right] - \max_j\left\{\pi_j\right\}\\
&= \sum_{j=1}^2\pi_j\left[F_j(\infty) - F_j(-\infty)\right] - \max_j\left\{\pi_j\right\}\\
&= 1 - \max_j\left\{\pi_j\right\},
\end{align*}
which implies \Cref{eq:hrho1}.

We can prove \Cref{eq:hrho2} in a similar way.
For $k=1,\ldots,m+1$, redefine 
\begin{align*}
&j_k\in\argmax_j\left\{\wh{\varphi}_j(v_k,v_{k-1})\right\},\\ &l_k\in\argmin_j\left\{\wh{\varphi}_j(v_k,v_{k-1})\right\}
\end{align*}
such that $\{j_k,l_k\}=\{1,2\}$.
By \Cref{hhveq,rhoheq3}, we have
\begin{align*}
\wh{h}_N(\bs{v}) + \wh{\rho}_{\bs{v},N}
&= \sum_{k=1}^{m+1} \left[\max_j\left\{\wh{\varphi}_{j,N}(v_k,v_{k-1})\right\} + \min_j\left\{\wh{\varphi}_{j,N}(v_k,v_{k-1})\right\}\right] - \max_j\left\{\wh{\pi}_{j,N}\right\}\\
&= \sum_{k=1}^{m+1} \left[\wh{\varphi}_{j_k,N}(v_k,v_{k-1}) + \wh{\varphi}_{l_k,N}(v_k,v_{k-1})\right] - \max_j\left\{\wh{\pi}_{j,N}\right\}\\
&= \sum_{k=1}^{m+1} \left[\wh{\varphi}_{1,N}(v_k,v_{k-1}) + \wh{\varphi}_{2,N}(v_k,v_{k-1})\right] - \max_j\left\{\wh{\pi}_{j,N}\right\}\\
&= \sum_{j=1}^2\sum_{k=1}^{m+1}\wh{\pi}_{j,N}\left[\wh{F}_{j,N}(v_k)-\wh{F}_{j,N}(v_{k-1})\right] - \max_j\left\{\wh{\pi}_{j,N}\right\}\\
&= \sum_{j=1}^2\wh{\pi}_{j,N}\left[\wh{F}_{j,N}(\infty) - \wh{F}_{j,N}(-\infty)\right] - \max_j\left\{\wh{\pi}_{j,N}\right\}\\
&= 1 - \max_j\left\{\wh{\pi}_{j,N}\right\},
\end{align*}
which implies \Cref{eq:hrho2}.
\end{proof}

It is immediate from \Cref{hplusrho} that 
\begin{align}
&\argmin_{\bs{v}\in\R_\le^m} \left\{\rho_{\bs v}\right\} = \V_m,\label{argminrho} \\
&\argmin_{\bs{v}\in\wh{\R}_N^m} \left\{\wh{\rho}_{\bs{v},N}\right\} = \wh{\V}_{m,N}\label{argminrhoh}
\end{align}
for $m=1,\ldots,n$.

\begin{theorem}
For $\bs{v}\in\R_\le^n$, $\rho_{\bs v}$ attains its unique minimum $\rho(\pi_1f_1,\pi_2f_2)$ at $\bs{v}=\bs{c}$.
\end{theorem}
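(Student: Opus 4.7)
The plan is to reduce this statement to results already in hand, namely \Cref{hplusrho} (which links $h$ and $\rho_{\bs v}$ via a constant) and \Cref{delISc} (which pins down the unique maximizer of $h$ over $\R_\le^n$).

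First, by \Cref{eq:hrho1} we have $h(\bs{v}) + \rho_{\bs{v}} = 1 - \max_j\{\pi_j\}$ for every $\bs{v} \in \R_\le^n$. Thus minimizing $\rho_{\bs{v}}$ over $\R_\le^n$ is exactly the same optimization problem as maximizing $h(\bs{v})$ over $\R_\le^n$; in particular the minimum is attained wherever the maximum of $h$ is attained, and uniquely so. By \Cref{delISc}, the maximum of $h$ over $\R_\le^n$ is attained uniquely at $\bs{v}=\bs{c}$. This already settles the uniqueness of the minimizer.

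Second, I would identify the minimum value $\rho_{\bs c}$ with $\rho(\pi_1f_1,\pi_2f_2)$ directly from the definition. By \Cref{rhoeq3},
\begin{equation*}
\rho_{\bs c} = \sum_{k=1}^{n+1}\min_j\left\{\pi_j\left[F_j(c_k)-F_j(c_{k-1})\right]\right\}.
\end{equation*}
The key observation is that on each open interval $(c_{k-1}, c_k)$ no crossover point of $\pi_1f_1$ and $\pi_2f_2$ lies in the interior, and since $C'(\pi_1f_1,\pi_2f_2)$ is finite the sign of $\pi_1f_1-\pi_2f_2$ is therefore constant on that interval. Consequently, on each such interval either $\pi_1f_1 \le \pi_2f_2$ or $\pi_1f_1 \ge \pi_2f_2$ throughout, so the pointwise minimum integrates to the minimum of the individual integrals:
\begin{equation*}
\min_j\left\{\pi_j\left[F_j(c_k)-F_j(c_{k-1})\right]\right\} = \int_{c_{k-1}}^{c_k}\min_j\left\{\pi_jf_j(x)\right\}\,dx.
\end{equation*}
Summing over $k$ and comparing with \Cref{rhoeq} yields $\rho_{\bs c}=\rho(\pi_1f_1,\pi_2f_2)$.

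The only mildly nontrivial step is the interval-wise identity $\min_j\{\pi_j[F_j(c_k)-F_j(c_{k-1})]\} = \int_{c_{k-1}}^{c_k}\min_j\{\pi_jf_j\}\,dx$, but this is immediate once one uses that $\pi_1f_1-\pi_2f_2$ does not change sign on $(c_{k-1},c_k)$, which is guaranteed by the definition of $\bs{c}$ and the finiteness of $C'(\pi_1f_1,\pi_2f_2)$. Everything else is a direct invocation of \Cref{delISc} and \Cref{hplusrho}.
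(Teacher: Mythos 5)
Your proposal is correct and takes essentially the same route as the paper, which proves the theorem by citing \Cref{delISc}, \Cref{rhoeq}, and \Cref{argminrho} (the latter being exactly your use of \Cref{hplusrho} to convert minimizing $\rho_{\bs v}$ into maximizing $h$). Your interval-wise sign-constancy argument simply re-derives the identity already recorded in \Cref{rhoeq}, so there is no substantive difference.
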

\begin{proof}
This follows from \Cref{delISc}, \Cref{rhoeq}, and \Cref{argminrho}.
\end{proof}

\begin{theorem}
Let $\wh{\bs{v}}_N'\in \argmin_{\bs{v}\in\wh{\R}_N^n} \{\wh{\rho}_{\bs{v},N}\}$.
Then $\wh{\bs{v}}_N'$ converges completely to $\bs{c}$ as $N\to\infty$.
Furthermore, $\wh{\rho}_{\wh{\bs{v}}_N',N}$ converges completely to $\rho(\pi_1f_1,\pi_2f_2)$ as $N\to\infty$.
\end{theorem}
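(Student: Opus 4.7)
The plan is to reduce the statement to the results already established for $\wh{\bs{v}}_N$ by exploiting the identity in \Cref{hplusrho}, which ties the $\rho$-minimization problem to the $h$-maximization problem that has been studied throughout the appendix. The crucial observation is that \Cref{eq:hrho2} implies $\wh{h}_N(\bs{v})+\wh{\rho}_{\bs{v},N}$ is a quantity depending only on $N$ (through $\max_j\{\wh{\pi}_{j,N}\}$) and not on $\bs{v}$. Consequently, as recorded in \Cref{argminrhoh}, minimizing $\wh{\rho}_{\bs{v},N}$ over $\wh{\R}_N^n$ is exactly the same problem as maximizing $\wh{h}_N(\bs{v})$ over $\wh{\R}_N^n$, so $\wh{\bs{v}}_N'\in\wh{\V}_{n,N}$.

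Once this is in hand, the first half of the conclusion is immediate. I would argue as in \Cref{v_conv_app}: since $\V_n=\{\bs{c}\}$ by \Cref{delISc}, for any $\bs{v}\in\wh{\V}_{n,N}$ we have $\|\bs{v}-\bs{c}\|\le D_\mathrm{E}(\wh{\V}_{n,N},\V_n)$. Applying this bound to $\wh{\bs{v}}_N'$ and invoking the complete convergence of $D_\mathrm{E}(\wh{\V}_{n,N},\V_n)$ to $0$ from \Cref{DVn} yields the complete convergence of $\wh{\bs{v}}_N'$ to $\bs{c}$.

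For the second half, I would re-run the computation in the proof of \Cref{rho_prf} verbatim, with $\wh{\bs{v}}_N'$ in place of $\wh{\bs{v}}_N$. The bound \Cref{rhohineq} is derived from \Cref{xyzw} together with the expressions \Cref{rhoeq,rhoheq}, neither of which uses any property of $\wh{\bs{v}}_N$ beyond its membership in $\wh{\R}_N^n$; thus the same inequality holds with $\wh{\bs{v}}_N'$. The three terms on the right of \Cref{rhohineq} converge completely to $0$ by \Cref{pihconv}, \Cref{Fhconv}, and the first half just proved. The union-of-events argument at the end of the proof of \Cref{rho_prf} then carries over unchanged, giving complete convergence of $\wh{\rho}_{\wh{\bs{v}}_N',N}$ to $\rho(\pi_1f_1,\pi_2f_2)$.

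There is no real obstacle here; the only conceptual point is recognizing that \Cref{hplusrho} makes the two estimation strategies equivalent on $\wh{\R}_N^n$, after which every argument previously developed for $\wh{\bs{v}}_N$ transfers with no modification. Alternatively, one could use \Cref{eq:hrho2} directly to write $\wh{\rho}_{\wh{\bs{v}}_N',N}=1-\max_j\{\wh{\pi}_{j,N}\}-\wh{h}_N(\wh{\bs{v}}_N')$ and $\rho(\pi_1f_1,\pi_2f_2)=1-\max_j\{\pi_j\}-h(\bs{c})$, then bound the difference using \Cref{pihconv} and \Cref{suphconv} combined with the continuity of $h$ at $\bs{c}$, but the direct reuse of \Cref{rho_prf} is cleaner.
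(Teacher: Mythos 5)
Your argument is exactly the paper's: it observes via \Cref{argminrhoh} (i.e., the identity of \Cref{hplusrho}) that $\wh{\bs{v}}_N'\in\wh{\V}_{n,N}$, after which \Cref{v_conv_app} and \Cref{rho_prf}, which apply to any element of $\wh{\V}_{n,N}$, give both conclusions. Your proposal is correct and merely spells out in more detail why those two results transfer unchanged.
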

\begin{proof}
Since $\wh{\bs{v}}_N'\in\wh{\V}_{n,N}$ by \Cref{argminrhoh}, the claim follows from \Cref{v_conv_app,rho_prf}.
\end{proof}

\section{Measurability of some functions}\label{measurability}
\subsection{The measurability of $\unboldmath\wh{\rho}_{\wh{\bs{v}}_N,N}$ (associated with \cref{rho_conv,rho_prf})}
It follows from \cref{argminrhoh} that $\wh{\rho}_{\wh{\bs{v}}_N,N} = \min_{\bs{v}\in\wh{\R}_N^n}\{\wh{\rho}_{\bs{v},N}\}$, which depends only on the rank statistics of $X_1,...,X_N$ (labeled by $Y_1,\ldots,Y_N$, respectively).
We then see that $\{\wh{\rho}_{\wh{\bs{v}}_N,N}(\omega)\mid \omega\in\Omega\}$ is a finite set and that $\wh{\rho}_{\wh{\bs{v}}_N,N}$ is a measurable simple function on $\Omega$.

\subsection{The measurability of $\unboldmath\sup_{\bs{v}\in\R_\le^m}|\wh{h}_N(\bs{v})-h(\bs{v})|$ (associated with \cref{suphconv})}
By the right continuity of $\wh{F}_{j,N}$ (\cref{def:A1}), we see that $\sup_{\bs{v}\in\R_\le^m}|\wh{h}_N(\bs{v})-h(\bs{v})| = \sup_{\bs{v}\in\mathbb{Q}_\le^m}|\wh{h}_N(\bs{v})-h(\bs{v})|$ for any positive integer $m$, where $\mathbb{Q}$ is the set of rational numbers and $\mathbb{Q}_\le^m = \{(v_1,\ldots,v_m)\in\mathbb{Q}^m\mid v_1\le\cdots\le v_m\}$.
Since $\mathbb{Q}_\le^m$ is countable and $\wh{h}_N(\bs{v})$ is obviously measurable on $\Omega$, $\sup_{\bs{v}\in\R_\le^m}|\wh{h}_N(\bs{v})-h(\bs{v})|$ is also measurable on $\Omega$.

\subsection{The measurability of $\unboldmath D_\mathrm{E}(\wh{\V}_{n,N}, \V_n)$ (associated with \cref{DVn})}
Let $\{K_1,\ldots,K_m\}$ be the collection of all nonempty subsets of $\{(i_1,\ldots,i_n)\mid 1\le i_1\le\cdots\le i_n\le N-1\}$ with $K_j\ne K_l$ if ($j\ne l$),  $\Omega_{K_j}$ be the set of all $\omega\in\Omega$ such that $\wh{\V}_{n,N} = \{(Z_{i_1}, \ldots, Z_{i_n})\mid (i_1,\ldots,i_n)\in K_j\}$.
Then $\Omega_{K_j}\in\mathcal{F}$ for all $j\in\{1,\ldots,m\}$,
$\Omega = \cup_{j=1}^m \Omega_{K_j}$, and $\Omega_{K_j}\cap\Omega_{K_l}=\emptyset$ if $j\ne l$.
Since the restriction of $D_\mathrm{E}(\wh{\V}_{n,N}, \V_n)$ to each $\Omega_{K_j}$ coincides with $\max\,\{\|(Z_{i_1}, \ldots, Z_{i_n})-\bs{c}\|\mid (i_1,\ldots,i_n)\in K_j\}$, which is measurable on $\Omega_{K_j}$, we see that $D_\mathrm{E}(\wh{\V}_{n,N}, \V_n)$ is measurable on $\Omega$.

\subsection{The measurability of $\unboldmath \wh{\bs{v}}_N$ (associated with \cref{v_conv,v_conv_app})}
We can choose $\wh{\bs{v}}_N\in\wh{\V}_{n,N}$ such that $\wh{\bs{v}}_N:\Omega\to\R^n$ is measurable.
Indeed, let $\mathcal{I}=\{(i_1,\ldots,i_n)\mid 1\le i_1\le\cdots\le i_n\le N-1\}$ and $\mathbb{Z}_{(i_1,\ldots,i_n)}=(Z_{i_1},\ldots,Z_{i_n})\in\widehat{\R}_N^n$ for $(i_1,\ldots,i_n)\in\mathcal{I}$.
Note that $\Omega$ equals the disjoint union of measurable sets
\[
\Omega_{\mathcal{J}} = \left\{\omega\in\Omega\mid \wh{h}_N(\mathbb{Z}_{\bs j}) = \max_{\bs{i}\in\mathcal{I}}\wh{h}_N(\mathbb{Z}_{\bs i})\text{ if and only if }\bs{j}\in\mathcal{J}\right\}
\]
over all nonempty subsets $\mathcal{J}$ of $\mathcal{I}$.
For such $\mathcal J$, we can define $\max \mathcal{J}$ and $\min \mathcal{J}$ in lexicographic order.
If we put $\wh{\bs{v}}_N = \mathbb{Z}_{\max \mathcal{J}}$ (or $\wh{\bs{v}}_N = \mathbb{Z}_{\min \mathcal{J}}$) on each $\Omega_{\mathcal{J}}$, then $\wh{\bs{v}}_N$ is measurable.

If we choose $\wh{\bs{v}}_N\in\wh{\V}_{n,N}$ at random independently of $(\Omega,\mathcal{F},\mathbb{P})$, we cannot guarantee that $\wh{\bs{v}}_N$ is measurable.
In such a case,
we mean by ``$\wh{\bs{v}}_N$ converges completely to $\bs{c}$ as $N\to\infty$'' that for any $\epsilon>0$, there exists a collection $\{A_1,A_2,\ldots\}$ of measurable sets such that $\sum_{N=1}^\infty \mathbb{P}(A_N)<\infty$ and $A_N\supset \{\omega\in\Omega\mid \|\wh{\bs{v}}_N-\bs{c}\|>\epsilon\}$ for all $N$, which also implies that $\wh{\bs{v}}_N$ converges almost surely to $\bs{c}$ (in the sense that $\mathbb{P}(\{\omega\in\Omega\mid\lim_{N\to\infty} \wh{\bs{v}}_N = \bs{c}\}) =1$) if $(\Omega,\mathcal{F},\mathbb{P})$ is complete (see \cref{rem:compas}).
In fact, we can take $A_N = \{\omega\in\Omega\mid D_\mathrm{E}(\wh{\V}_{n,N}, \V_n)>\epsilon\}$.

\section{Additional proofs}\label{AppB}
In this section, \Cref{first_C,first_rho,second_C,second_rho} will be proved.
We shall take over the notations in \Cref{sec:exper}.

\begin{proposition}
In the first case, 
\begin{align*}
&C(\pi_1f_1, \pi_2f_2) = \{c_1\} = \left\{(\log 2)/2\right\},\\
&\rho(\pi_1f_1, \pi_2f_2) = \left[2-2\Phi\left(c_1+1\right) + \Phi\left(c_1-1\right)\right]/3.
\end{align*}
\end{proposition}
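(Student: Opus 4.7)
The plan is a direct computation in two steps: first pin down the single crossover point by solving a transcendental equation that collapses to a linear one after taking logarithms, then split the min-integral at that point and identify the active branch on each side.

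\medskip
\noindent\textbf{Step 1 (crossover point).} I would set $\pi_1 f_1(x) = \pi_2 f_2(x)$, i.e.
$\tfrac{2}{3}\nu_{-1,1}(x) = \tfrac{1}{3}\nu_{1,1}(x)$, substitute the definition \cref{gausspdf}, and cancel the common factor $1/\sqrt{2\pi}$. Taking the logarithm of both sides gives the linear equation $\log 2 - (x+1)^2/2 = -(x-1)^2/2$, which simplifies to $\log 2 = 2x$, hence $x = (\log 2)/2$. To confirm this is a crossover (not merely a coincidence point), I would examine the ratio
\begin{equation*}
\frac{\pi_1 f_1(x)}{\pi_2 f_2(x)} = 2\exp(-2x),
\end{equation*}
which is a strictly decreasing function of $x$ passing through $1$ at $x=(\log 2)/2$. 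So the sign of $\pi_1f_1-\pi_2f_2$ flips exactly once, giving both $C(\pi_1 f_1,\pi_2 f_2) = C'(\pi_1 f_1,\pi_2 f_2) = \{(\log 2)/2\}$ (in particular $C'$ is finite, as required by the general framework in \Cref{sec:anal}).

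\medskip
\noindent\textbf{Step 2 (overlap formula).} Writing $c_1=(\log 2)/2$, the monotonicity of the ratio above tells me that $\pi_1 f_1 > \pi_2 f_2$ on $(-\infty, c_1)$ and $\pi_1 f_1 < \pi_2 f_2$ on $(c_1,\infty)$. Therefore
\begin{equation*}
\rho(\pi_1 f_1,\pi_2 f_2) = \int_{-\infty}^{c_1} \pi_2 f_2(x)\,dx + \int_{c_1}^{\infty} \pi_1 f_1(x)\,dx = \pi_2 F_2(c_1) + \pi_1\bigl(1-F_1(c_1)\bigr).
\end{equation*}
Since $F_1$ is the CDF of $\nu_{-1,1}$ and $F_2$ that of $\nu_{1,1}$, the standard substitution $t = x+1$ (resp.\ $t=x-1$) yields $F_1(c_1) = \Phi(c_1+1)$ and $F_2(c_1) = \Phi(c_1-1)$ with $\Phi$ as in \Cref{Phidef}. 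Plugging in $\pi_1=2/3$ and $\pi_2=1/3$ gives
\begin{equation*}
\rho(\pi_1 f_1,\pi_2 f_2) = \tfrac{1}{3}\Phi(c_1-1) + \tfrac{2}{3}\bigl(1-\Phi(c_1+1)\bigr) = \frac{2 - 2\Phi(c_1+1) + \Phi(c_1-1)}{3},
\end{equation*}
as claimed.

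\medskip
\noindent\textbf{Expected difficulty.} There is no real obstacle: the log transform linearizes the crossover equation, and monotonicity of the ratio immediately identifies the minimum branch on each side. The only thing that needs a moment of care is the CDF substitution (ensuring $F_j(c_1)$ is correctly written in terms of the shifted standard normal), which is routine.
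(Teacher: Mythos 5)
Your proof is correct and follows essentially the same route as the paper's: solve $\pi_1 f_1(x)=\pi_2 f_2(x)$ to get $c_1=(\log 2)/2$, then split the min-integral at $c_1$ to obtain $\rho = \pi_2 F_2(c_1)+\pi_1[1-F_1(c_1)]$ and rewrite via $\Phi$. Your explicit monotone-ratio argument ($\pi_1 f_1/\pi_2 f_2 = 2e^{-2x}$) just makes precise the crossover/branch identification that the paper asserts without detail.
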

\begin{proof}
The equation $\pi_1f_1(x) = \pi_2f_2(x)$ gives $x=(\log 2)/2$, which is a crossover point.
Hence $C(\pi_1f_1, \pi_2f_2) = \{c_1\} = \left\{(\log 2)/2\right\}$.
Next, 
\begin{align*}
\rho(\pi_1f_1, \pi_2f_2)
&= \pi_2F_2(c_1)+\pi_1[1-F_1(c_1)] \\
&= \frac{1}{3}\Phi(c_1-1) + \frac{2}{3}\left[1-\Phi(c_1+1)\right].
\end{align*}
\end{proof}

\begin{proposition}
In the second case, 
\begin{align*}
&C(\pi_1f_1, \pi_2f_2) = \{c_1, c_2\} = \cosh^{-1}\left(0.8\sqrt{\mathrm{e}}\right), \\
&\rho(\pi_1f_1, \pi_2f_2) = 0.8 - 0.5\Phi(c_1+1) + 0.5\Phi(c_2+1) - 0.8\Phi(c_2).
\end{align*}
\end{proposition}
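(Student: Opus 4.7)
The plan is to prove the two claims in turn. For the crossover set, I would write out $\pi_1 f_1(x) = \pi_2 f_2(x)$ and exploit the support of $\tau_{0,0.5}$, which is $[0, 0.5]$. Off this support, the equation is purely Gaussian: $0.25[\nu_{-1,1}(x) + \nu_{1,1}(x)] = 0.4\,\nu_{0,1}(x)$. Factoring $e^{-x^2/2}/\sqrt{2\pi}$ out of both sides and using $e^{-(x\pm 1)^2/2} = e^{-x^2/2}e^{-1/2}e^{\mp x}$ reduces this to $e^{-1/2}\cdot 2\cosh(x) = 1.6$, i.e., $\cosh(x) = 0.8\sqrt{\mathrm{e}}$, so the off-support solutions are $\pm c$ with $c = \cosh^{-1}(0.8\sqrt{\mathrm{e}}) \simeq 0.779$. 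Since $c > 0.5$, these do lie off $[0, 0.5]$.

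Next I would rule out further solutions on $[0, 0.5]$. By strict monotonicity of $\cosh$ on $[0, \infty)$, $|x| < c$ forces $\cosh(x) < 0.8\sqrt{\mathrm{e}}$, so the same factoring yields $\pi_1 f_1(x) < 0.4\,\nu_{0,1}(x)$ on $(-c, c)$. Since the triangular piece is non-negative, $\pi_2 f_2 \ge 0.4\,\nu_{0,1}$ everywhere, hence $\pi_1 f_1 < \pi_2 f_2$ strictly on $[0, 0.5] \subset (-c, c)$; no additional coincidence points can lie there. A symmetric argument using $\cosh(x) > 0.8\sqrt{\mathrm{e}}$ for $|x| > c$ and $\tau_{0,0.5} = 0$ off $[0, 0.5]$ shows $\pi_1 f_1 > \pi_2 f_2$ off $[c_1, c_2]$. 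Thus $\pm c$ are genuine crossover points and $C(\pi_1 f_1, \pi_2 f_2) = \{c_1, c_2\}$.

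For the overlap, the sign analysis above gives $\min\{\pi_1 f_1, \pi_2 f_2\} = \pi_2 f_2$ on $(-\infty, c_1] \cup [c_2, \infty)$ and $\pi_1 f_1$ on $[c_1, c_2]$. Splitting $\rho$ accordingly, the outer integrals collapse to $0.4\Phi(c_1) + 0.4[1 - \Phi(c_2)]$, since the triangle vanishes outside $[0, 0.5] \subset [c_1, c_2]$. The middle integral, expanded via the two Gaussian components of $f_1$, becomes $0.25[\Phi(c_2+1) - \Phi(c_1+1) + \Phi(c_2-1) - \Phi(c_1-1)]$. Applying the reflection $c_1 = -c_2$ together with $\Phi(-x) = 1 - \Phi(x)$ yields $\Phi(c_1) = 1 - \Phi(c_2)$, $\Phi(c_2 - 1) = 1 - \Phi(c_1 + 1)$, and $\Phi(c_1 - 1) = 1 - \Phi(c_2 + 1)$; substituting and collecting constants collapses the sum to the stated closed form.

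The only mildly delicate step is ruling out extraneous coincidence points on $[0, 0.5]$, where the triangular kernel makes $\pi_1 f_1 = \pi_2 f_2$ a transcendental equation mixing Gaussians and piecewise-linear terms. The bound $\pi_1 f_1 < 0.4\,\nu_{0,1}$ on $(-c, c)$ resolves this cleanly, since adding the non-negative triangle can only push $\pi_2 f_2$ further above $\pi_1 f_1$; everything else is bookkeeping with $\Phi$.
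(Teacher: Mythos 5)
Your proposal is correct and follows essentially the same route as the paper: reduce $\pi_1f_1=\pi_2f_2$ off the support of $\tau_{0,0.5}$ to $\cosh(x)=0.8\sqrt{\mathrm{e}}$, note $c>0.5$ and $\pi_1f_1<\pi_2f_2$ on $[0,0.5]$, then integrate $\min\{\pi_1f_1,\pi_2f_2\}$ piecewise over $(-\infty,c_1]$, $[c_1,c_2]$, $[c_2,\infty)$ and simplify with $\Phi$. The only difference is that you spell out the bound $\pi_1f_1<0.4\,\nu_{0,1}\le\pi_2f_2$ on $(-c,c)$ and the reflection identities, which the paper leaves implicit.
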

\begin{proof}
If $x<0$ or $x>0.5$, then $f_2(x)=0.8\nu_{0,1}(x)$, and $\pi_1f_1(x) = \pi_2f_2(x)$ gives $\cosh (x)=0.8\sqrt{\mathrm{e}}$.
There is a unique $c>0$ such that $\cosh (c)=0.8\sqrt{\mathrm{e}}$.
Since $c>0.5$ and $\pi_1f_1<\pi_2f_2$ on $[0, 0.5]$, we have $C(\pi_1f_1, \pi_2f_2) = \{-c, c\} = \cosh^{-1}(0.8\sqrt{\mathrm{e}})$.
Next, 
\begin{align*}
\rho(\pi_1f_1, \pi_2f_2)
&= \pi_2F_2(-c)+\pi_1[F_1(c)-F_1(-c)]+\pi_2[1-F_2(c)] \\
&= 0.8 - 0.5\Phi(-c+1) + 0.5\Phi(c+1) - 0.8\Phi(c).
\end{align*}
\end{proof}
\end{appendix}

 \section*{Acknowledgements}
This study was partially supported by
JSPS KAKENHI Grant Numbers JP15K04814, JP20K03509, and JP21K15762.
We thank Atsushi Komaba (University of Yamanashi) for the validation of our numerical results.



\bibliographystyle{imsart-number} 
\bibliography{references}       


\end{document}